\documentclass[notitlepage, pra, 10pt, aps]{revtex4-2}
\usepackage{amsmath,amssymb,amsfonts}
\usepackage{graphicx}
\usepackage{bm}
\usepackage{psfrag}
\usepackage[caption=false]{subfig}
\usepackage{color}
\usepackage{xcolor}
\usepackage{mathtools}
\usepackage{hyperref}
\usepackage[makeroom]{cancel}

\DeclareMathOperator*{\argmin}{arg\,min}

\newtheorem{theorem}{Theorem}
\newtheorem{lemma}{Lemma}

\newtheorem{proof}{Proof}
\newtheorem{assumption}{Assumption}

\allowdisplaybreaks[1]

\begin{document}

\title{Distributed Computing for Huge-Scale Aggregative Convex Programming}

\author{Luoyi Tao}
\email[]{luoyitao@smail.iitm.ac.in, taoluoyi@gmail.com}
\affiliation{Geophysical Flows Lab\\
                and\\
             Department of Aerospace Engineering\\
             Indian Institute of Technology Madras\\
             Chennai 600 036, India}

\date{\today}

\def\s{\!}
\def\ss{\!\!}
\def\sss{\!\!\!}
\def\l{\left}
\def\r{\right}

\def\Prox{\text{Prox}}

\def\IndicatorFunction{\delta}
\def\ConeK{{\cal K}}
\def\SubjectTo{\text{subject to}}

\def\cA{{\cal A}}
\def\cB{{\cal B}}
\def\cC{{\cal C}}
\def\cD{{\cal D}}
\def\cE{{\cal E}}
\def\cF{{\cal F}}
\def\cG{{\cal G}}
\def\cH{{\cal H}}
\def\cI{{\cal i}}
\def\cK{{\cal K}}
\def\cL{{\cal L}}
\def\cP{{\cal P}}
\def\cQ{{\cal Q}}
\def\cR{{\cal R}}
\def\dist{\textbf{dist}}
\def\dom{\textbf{dom}}
\def\Indicator{I}
\def\maximize{\text{maximize}}
\def\minimize{\text{minimize}}
\def\overX{\bar{X}}
\def\overY{\bar{Y}}
\def\Projection{\Pi}
\def\prox{\textbf{prox}}
\def\subjectto{\text{subject to}}
\def\Lik{\Lambda_i^k}
\def\Ljk{\Lambda_j^k}
\def\NCV{N_{\text{CV}}}

\def\LDmu{\mu}
\def\LDnu{\nu}
\def\LDbeta{\beta}
\def\pcE{E}
\def\pcF{F}
\def\pcG{G}
\def\pcH{H}
\def\svG{\prescript{G\!}{}{Y\!}}
\def\dvG{\prescript{G\!}{}{\!\mu}}
\def\tdvG{\prescript{G}{}{\!\tilde{\mu}}}
\def\svGcoef{\prescript{G\!}{}{\!\gamma}}
\def\augGcoef{\prescript{G\!}{}{\!\rho}}

\def\dvZ{\prescript{X}{}{\!\mu}} 
\def\dvZp{\prescript{p\!X}{}{\!\mu}}
\def\tdvZp{\prescript{p\!X}{}{\!\tilde{\mu}}}
\def\dvZn{\prescript{n\!X}{}{\!\mu}}
\def\tdvZn{\prescript{n\!X}{}{\!\tilde{\mu}}}
\def\svZ{Y\!}
\def\svZp{\prescript{p\!X}{}{\!\svZ}}
\def\svZn{\prescript{n\!X}{}{\!\svZ}}
\def\svZpcoef{\prescript{p\!X}{}{\!\gamma}}
\def\svZncoef{\prescript{n\!X}{}{\!\gamma}}

\def\dvH{\prescript{H}{}{\!\!\mu}} 
\def\dvHp{\prescript{p\!H}{}{\!\!\mu}}
\def\tdvHp{\prescript{p\!H}{}{\!\tilde{\mu}}}
\def\dvHn{\prescript{n\!H}{}{\!\!\mu}}
\def\tdvHn{\prescript{n\!H}{}{\!\tilde{\mu}}}
\def\svH{Y\!}
\def\svHp{\prescript{p\!H}{}{\!\svH}}
\def\svHn{\prescript{n\!H}{}{\!\svH}}
\def\svHpcoef{\prescript{p\!H}{}{\!\gamma}}
\def\svHncoef{\prescript{n\!H}{}{\!\gamma}}

\def\svE{\prescript{E\!}{}{Y\!}}
\def\dvE{\prescript{E\!}{}{\!\mu}}
\def\tdvE{\prescript{E}{}{\!\tilde{\mu}}}
\def\svEcoef{\prescript{E\!}{}{\!\gamma}}
\def\augEcoef{\prescript{E\!}{}{\!\rho}}
\def\JE{\prescript{E\!\!}{}{J}}

\def\svF{\prescript{F\!}{}{Y\!}}
\def\dvF{\prescript{F\!}{}{\!\mu}}
\def\tdvF{\prescript{F}{}{\!\tilde{\mu}}}
\def\svFcoef{\prescript{F\!}{}{\!\gamma}}
\def\augFcoef{\prescript{F\!}{}{\!\rho}}
\def\JF{\prescript{F\!\!\!}{}{J}}
\def\JFo{\prescript{F_{1}\!\!\!}{}{J}}
\def\JFt{\prescript{F_{2}\!\!\!}{}{J}}

\def\proxXcoef{\sigma}
\def\proxXNormOcoef{\prescript{1}{}{\!\sigma}}
\def\proxXNormTcoef{\prescript{2}{}{\!\sigma}}
\def\svGcoef{\prescript{G\!}{}{\!\gamma}}

\def\dvEcoef{\prescript{E\!}{}{\alpha}}
\def\dvFcoef{\prescript{F\!}{}{\alpha}}
\def\dvGcoef{\prescript{G\!}{}{\alpha}}
\def\dvZpcoef{\prescript{p\!X}{}{\!\alpha}}
\def\dvZncoef{\prescript{n\!X}{}{\!\alpha}}
\def\dvHpcoef{\prescript{p\!H}{}{\!\alpha}}
\def\dvHncoef{\prescript{n\!H}{}{\!\alpha}}
\def\GammaE{\prescript{E\!}{}{\Gamma}}
\def\GammaF{\prescript{F\!}{}{\Gamma}}

\def\eE{\prescript{E\!}{}{e}}
\def\eF{\prescript{F\!}{}{e}}
\def\eG{\prescript{G\!}{}{e}}
\def\eZp{\prescript{p\!X}{}{\!e}}
\def\eZn{\prescript{n\!X}{}{\!e}}
\def\eHp{\prescript{p\!H}{}{\!e}}
\def\eHn{\prescript{n\!H}{}{\!e}}

\def\udvE{\prescript{E\!}{}{u}}
\def\udvF{\prescript{F\!}{}{u}}
\def\udvG{\prescript{G\!}{}{u}}
\def\udvZp{\prescript{p\!X}{}{\!u}}
\def\udvZn{\prescript{n\!X}{}{\!u}}
\def\udvHp{\prescript{p\!H}{}{\!u}}
\def\udvHn{\prescript{n\!H}{}{\!u}}

\def\usvE{\prescript{E\!}{}{u}}
\def\usvF{\prescript{F\!}{}{u}}
\def\usvG{\prescript{G\!}{}{u}}
\def\usvZp{\prescript{p\!X}{}{\!u}}
\def\usvZn{\prescript{n\!X}{}{\!u}}
\def\usvHp{\prescript{p\!H}{}{\!u}}
\def\usvHn{\prescript{n\!H}{}{\!u}}

\def\proj{P}

\begin{abstract}
Concerning huge-scale aggregative convex programming of a linear objective subject to the affine constraints of equality and inequality and the quadratic constraints of inequality, convex and aggregatively computable, an algorithm is developed for its distributed computing. The consensus with single common variable is used to partition the constraints into multi-consensus blocks, and the subblocks of each consensus block are employed to partition the primal variables into multiple sets of disjoint subvectors. The global consensus constraints of equality and the original constraints are converted into the extended constraints of equality via slack variables to help initialize the algorithm. The augmented Lagrangian, the proximal point method with double proximal terms or single, the block-coordinate Gauss-Seidel method, and ADMM are used to update the primal and slack variable sequences; descent models with built-in bounds are used to update the dual, motivated by the mathematical structures of the first-order characteristics of the update rules for the primal and slack. The feasibility conditions for the algorithm to produce optimal solutions are described and their realizations through initial and parameter values are outlined. Under the feasibility conditions supposed, convergence of the algorithm to optimal solutions is argued and the rate of convergence, $O(1/k^{1/2})$ is estimated roughly. Issues requiring further explorations are listed.

\begin{description}
\item[Mathematics Subject Classification]
\verb+90C05+
\verb+90C06+
\verb+90C25+
\verb+90C30+
\verb+68W15+
\verb+68W40+
\end{description}
\end{abstract}

\maketitle

\section{Introduction}

Toward the exploration of hydrodynamic turbulence modeling
within the framework of optimal correlation functions \cite{Tao2020},
an algorithm for distributed computing of
linear programming of huge-scales is proposed in \cite{Tao2025v7LP}.
In this work, we extend the linear program by including
 the quadratic constraints of inequality
 that are convex and aggregatively computable
and develop a distributed computing algorithm
to solve this extended problem.

Specifically, the study 
employs the techniques outlined in the following, either conventional or new.
(a) The global consensus with single common variable is applied to
partition the constraints of equality and inequality into multiple consensus blocks;
the subblocks of each consensus block partition the primal variables
into multiple sets of disjoint subvectors
 to make computation of the primal variables feasible.
(b)
 The global consensus constraints of equality and the original constraints
 of equality and inequality are converted
 into the extended constraints of equality via slack variables, the latter
 treated as independent,
so as to help resolve the issues of
 initialization and feasibility of the algorithm.
(c)
The augmented Lagrangian is adopted as the basis to solve for the primal
and slack variables iteratively;
the proximal point method, the block-coordinate Gauss-Seidel method, and ADMM are
 used to update the primal and slack variables.
(d)
The primal sequence in a subblock is updated
with the help of double proximal terms,
one involving the 2-norm and the other the 1-norm,
in order to help make the algorithm feasible.
Motivated by the mathematical structures of the first-order characteristics
of the update rules for the primal and slack variables,
the descent models with built-in bounds are proposed
for the updates of the dual variables.
(e)
The feasibility conditions for the algorithm to produce optimal solutions
are described and the assumptions underlying the analysis
are listed with some justifications.
(f)
The initial values of the primal, slack, and dual sequences,
the values of the parameters,
and the upper bounds of the slack and dual
are proposed in a qualitative manner
in order to realize the feasibility conditions.
(g)
Convergence of the algorithm to optimal solution is shown
 and the rate of convergence, $O(1/k^{1/2})$ is roughly estimated
 under the feasibility conditions supposedly satisfied.
(h)
The issues yet to be resolved further are listed.

The paper is organized as follows.
Section~\ref{sec:Formulation} formulates the optimization problem.
Section~\ref{sec:UpdatePrimal} presents methodology for updates of
the primal and slack variables.
Section~\ref{sec:UpdateDual} discusses
the descent updates for the dual variables,
the feasibility conditions, and the assumptions;
it also presents the analysis of convergence of the algorithm.
The issues regarding initialization
and estimation of the parameters and the upper bounds
are discussed in more details in Sec.~\ref{sec:Initialization}.
Section~\ref{sec:Summary} summarizes the main results and lists some issues
that need to be explored further.

\section{Formulation of optimization problem}
\label{sec:Formulation}
The primal problem is aggregative convex programming,
\begin{align}
\label{ACPGSPrimalProblemOriginal}
&
\minimize\ \ \  f(Z)
\notag\\[-6.5pt]&
\\[-6.5pt]&
\subjectto\ \ Z\in\cC.
\notag
\end{align}
Here, $f(Z)$ is linear;
\begin{align}
 \cC=\{Z\in \mathbb{R}^n: 
 \pcF(Z)\leq 0,\ 
 \pcG(Z)\leq 0,\ 
 \pcH(Z)=0,\ 
 l\leq Z\leq u\},
\label{ACPGSPrimalProblemConstraintSeto}
\end{align}
$\pcF$ is a vector function whose components are convex, quadratic,
and aggregatively computable, such as
\begin{align}
e(Z)=a(Z)+c(Z)^2,\ \
e(Z)=c(Z)^2-a(Z) b(Z), 
\label{NLGSQuadraticConvexConstraints}
\end{align}
where $a(Z)$, $b(Z)$, and $c(Z)$ are affine,
$\pcG$ and $\pcH$ are vector functions whose components are affine,
$l$ and $u$ are the lower and upper bounds for $Z$, respectively.
Without loss of generality, we take $l=-u$,
which can be obtained through the translational shift of the domain,
$f$ is treated equivalently as linear,
 $a(Z)$, $b(Z)$, $c(Z)$,
$\pcG(Z)$, and $\pcH(Z)$ as affine in the shifted domain.
Further, there is the possibility to scale $[-u,u]$.

Considering the huge-scale computational size,
both the value of $n$ and the number of constraints contained 
in \eqref{ACPGSPrimalProblemConstraintSeto} being great,
we first partition \eqref{ACPGSPrimalProblemOriginal} constraint-wise
via global consensus with single common variable $Z$ 
\cite{Boydetal2010, ParikhBoyd2013},
\begin{align}
\label{ACPGSPrimalProblemGlobalConsensusRing}
&
 \minimize\ \ \, \sum_{i=1}^N \big(f(X_i) +\Indicator_{\cC_i}(X_i)\big)
\notag\\[-13pt]&
\\[-2pt]&
\subjectto\ \ X_i-Z=0, \ \  i=1,\ldots,N.
\notag
\end{align}
Here, $\cC$ is partitioned into $N$ blocks, $\cC=\cup_{i=1}^{N}\cC_i$
with $\cC_i$ given by 
\begin{align}
 \cC_i=\big\{X_i\in [-u, u]:
    \pcF_{i}(X_i)\leq 0,\ 
    \pcG_i(X_i)\leq 0,\ 
    \pcH_i(X_i)=0\big\},
\label{ACPGSConstraintsithCBo}
\end{align}
and $\Indicator_{\cC_i}$ is the indicator function of $\cC_i$. 
Next, to have appropriate initialization of the algorithm, 
$\{X_i-Z=0\}$ is replaced by $\{X_{i}-Z\leq 0$, $Z-X_i\leq 0\}$
and $\{\pcH_i(X_i)=0\}$ by $\{\pcH_i(X_i)\leq 0$, $-\pcH_i(X_i)\leq 0\}$ equivalently; 
 $\cC_i$ of \eqref{ACPGSConstraintsithCBo} is extended to
\begin{align}
 \cC_i=\big\{X_i\in [-u, u]:
        X_i-Z\leq 0,
        Z-X_{i}\leq 0,
        Z\in [-u,u],  \,
        \pcF_{i}(X_i)\leq 0,
        \pcG_i(X_i)\leq 0,
        \pcH_i(X_i)\leq 0,
        -\pcH_i(X_i)\leq 0
        \big\}.
\label{ACPGSConstraintsithCB}
\end{align}
Further,
to help initialize the algorithm, the slack variables,
$\{\svZp_{i}, \svZn_{i}, \svF_{i}, \svG_{i},
    \svHp_{i}, \svHn_{i}\}$
are introduced
 to convert all the constraints of inequality in $\cC_i$ 
of \eqref{ACPGSConstraintsithCB}
into the extended constraints of equality,
 following the conventional practice
(and viewing the slack variables as part of the primal variables below),
\begin{align}
\cC_i=\big\{&
        X_i\in [-u, u]:\
        X_{i}-Z+\svZp_{i}=0,\
        Z-X_{i}+\svZn_{i}=0,\ 
        \svZp_{i}\in[0, \usvZp_{Y_{i}}],\ 
        \svZn_{i}\in[0, \usvZn_{Y_{i}}],\ 
        Z\in [-u,u],\
        \notag\\&\hskip10mm
        \pcF_i(X_i)+\svF_i= 0,\ 
        \pcG_i(X_i)+\svG_i= 0,\
        \pcH_i(X_i)+\svHp_{i}=0,\
        -\pcH_i(X_i)+\svHn_{i}=0,\
        \svF_{i}\in[0, \usvF_{Y_{i}}],
        \notag\\&\hskip10mm
        \svG_{i}\in[0, \usvG_{Y_i}],\
        \svHp_{i}\in[0, \usvHp_{Y_{i}}],\ 
        \svHn_{i}\in[0, \usvHn_{Y_{i}}]
    \big\}.
\label{ACPGSConstraintsithCBSlacked}
\end{align}
How to set the upper bounds for the slack variables, 
$\{\usvZp_{Y_{i}}, \usvZn_{Y_{i}}, \usvF_{Y_i}, \usvG_{Y_{i}},
    \usvHp_{Y_{i}}, \usvHn_{Y_{i}}\}$
is to be discussed later.
For convenience, we call this $\cC_i$ the $i$-th consensus block (CB)
or the $i$-CB to indicate the relevant operations involved. 
The above treatment involves the conversion of the global consensus constraints of equality
in \eqref{ACPGSPrimalProblemGlobalConsensusRing}
and the constraints of equality $\{\pcH_i(X_i)=0\}$ in \eqref{ACPGSConstraintsithCBo}
to the equivalent inequality constraints in \eqref{ACPGSConstraintsithCB}
and further to the extended constraints of equality involving slack variables
in \eqref{ACPGSConstraintsithCBSlacked}, following \cite{Tao2025v7LP}.
Though increasing the computational size,
this conversion plays a significant role to ensure the feasibility
and adequate initialization of the algorithm
proposed, as to become clear.

To help solve the objective function of \eqref{ACPGSPrimalProblemGlobalConsensusRing}
subject to \eqref{ACPGSConstraintsithCBSlacked},
 we employ the augmented Lagrangian $L$,
\begin{align}
&
 L\big(X,Z,\svZp,\svZn,\svF,\svG,\svHp,\svHn,
            \dvZp,\dvZn,\dvF,\dvG,\dvHp,\dvHn,\rho\big)
\notag\\[-3pt]
=\,&
 \sum_{i=1}^N L_i\big(X_i,Z,\svZp_i,\svZn_i,\svF_i,\svG_i,\svHp_i,\svHn_i,
    \dvZp_i,\dvZn_i,\dvF_i,\dvG_i,\dvHp_i,\dvHn_i,\rho_i\big).
\label{ACPGSLagranginDualConsensus}
\end{align}
Here,
\begin{align}
&
  L_i\big(X_i,Z,\svZp_i,\svZn_i,\svF_i,\svG_i,\svHp_i,\svHn_i,
    \dvZp_i,\dvZn_i,\dvF_i,\dvG_i,\dvHp_i,\dvHn_i,\rho_i\big)
\notag\\
=\,&
f(X_{i})
    +\big\langle  \dvZp_{i}, X_{i}-Z+\svZp_{i}\big\rangle
    +\big\langle  \dvZn_{i}, Z-X_{i}+\svZn_{i}\big\rangle
    +\big\langle  \dvF_{i}, \pcF_{i}(X_{i})+\svF_{i}\big\rangle
    +\big\langle  \dvG_{i},  \pcG_{i}(X_{i})+\svG_{i}\big\rangle
    \notag\\&
    +\big\langle  \dvHp_{i}, \pcH_{i}(X_{i})+\svHp_{i}\big\rangle
    +\big\langle  \dvHn_{i},-\pcH_{i}(X_{i})+\svHn_{i}\big\rangle
    +\frac{\rho_i}{2}\Big(
            \big\Vert X_{i}-Z+\svZp_{i} \big\Vert^2
            +\big\Vert Z-X_{i}+\svZn_{i} \big\Vert^2
            \notag\\&\hskip30mm
            +\big\Vert \pcF_{i}(X_{i})+\svF_{i} \big\Vert^2
            +\big\Vert \pcG_{i}(X_{i})+\svG_{i} \big\Vert^2
            +\big\Vert \pcH_{i}(X_{i})+\svHp_{i} \big\Vert^2
            +\big\Vert -\pcH_{i}(X_{i})+\svHn_{i} \big\Vert^2
    \Big),
\label{ACPGSLagranginFunctionithBC}
\end{align} 
which is the augmented Lagrangian function for the $i$-CB,
$\rho_i$ is the positive penalty parameter,
$\dvZp_{i}$,
$\dvZn_{i}$,
$\dvF_{i}$,
$\dvG_{i}$,
$\dvHp_{i}$,
and $\dvHn_{i}$ are
the dual variables associated with
$X_{i}-Z_{}+\svZp_{i}=0$,
$Z_{}-X_{i}+\svZn_{i}=0$,  
$\pcF_{i}(X^{}_{i})+\svF_{i}=0$,
$\pcG_{i}(X^{}_{i})+\svG_{i}=0$,
$\pcH_{i}(X_{i})+\svHp_{i}=0$,
and $-\pcH_{i}(X_{i})+\svHn_{i}=0$,
respectively.
The dual variables are supposedly finite
and their bounds are to be specified;
\begin{align}
&
X=(X_1,\ldots,X_N),\ 
\svZp=(\svZp_1,\ldots, \svZp_N),\
\svZn=(\svZn_1,\ldots, \svZn_N),\
\svF=(\svF_1,\ldots, \svF_N),\
\svG=(\svG_1,\ldots, \svG_N),
\notag\\&
\svHp=(\svHp_1,\ldots, \svHp_N),\
\svHn=(\svHn_1,\ldots, \svHn_N),\
\dvZp=(\dvZp_1,\ldots, \dvZp_N),\
\dvZn=(\dvZn_1,\ldots, \dvZn_N),
\notag\\&
\dvF=(\dvF_1,\ldots, \dvF_N),\
\dvG=(\dvG_1,\ldots, \dvG_N),\
\dvHp=(\dvHp_1,\ldots, \dvHp_N),\
\dvHn=(\dvHn_1,\ldots, \dvHn_N),\
\rho=(\rho_1,\ldots, \rho_N),
\end{align}
where the transpose symbol is ignored to avoid cumbersome notation.

Suppose that (without the slack variables and penalty)
 the dual problem has an optimal solution,
$\big\{X^{\ast}_i$, $Z^{\ast}$, $\dvZp^{\ast}_i-\dvZn^{\ast}_i$, 
$\dvF^{\ast}_i$, $\dvG^{\ast}_i$, $\dvHp^{\ast}_i-\dvHn^{\ast}_i$,
$\forall i\big\}$
satisfying the Karush-Kuhn-Tucker (KKT) conditions,
\begin{align}
 &
-\nabla_{X_{i}}\Big(
f(X_{i})
    +\big\langle  \dvZp_{i}-\dvZn_{i}, X_{i}-Z\big\rangle
    +\big\langle  \dvF_{i}, \pcF_{i}(X_{i})\big\rangle
    +\big\langle  \dvG_{i},  \pcG_{i}(X_{i})\big\rangle
    +\big\langle  \dvHp_{i}-\dvHn_{i}, \pcH_{i}(X_{i})\big\rangle
\s\Big)^{\ast}
\in
\partial_{X_{i}}\Indicator_{[-u_{},u_{}]}(X^{\ast}_{i}),
\notag\\&
-\sum_{i}\big(\dvZp^{\ast}_{i}-\dvZn^{\ast}_{i}\big)
\in 
\partial_{Z}\Indicator_{[-u,u]}(Z^{\ast}),\ \
  X^{\ast}_{i}=Z^{\ast},\ \
  \pcF_{i}(X^{\ast}_{i})\leq 0,\ \
  \dvF^{\ast}_{i}\geq 0,\ \ 
  \big\langle \dvF^{\ast}_{i}, \pcF_{i}(X^{\ast}_{i})\big\rangle =0,\ \
  \pcG_{i}(X^{\ast}_{i})\leq 0,
  \notag\\&
  \dvG^{\ast}_{i}\geq 0,\ \ 
  \big\langle \dvG^{\ast}_{i}, \pcG_{i}(X^{\ast}_{i})\big\rangle =0,\ \
  \pcH_{i}(X^{\ast}_{i})=0\ \ \forall i.
\label{KKTConditions}
\end{align}

To make it computationally feasible,  $X_i$ and $Z$ are partitioned 
into $M$ disjoint subvectors,
\begin{align}
X_{i}=(X_{i,1},\ldots, X_{i,M}),\
Z=(Z_{1},\ldots, Z_{M}),\ \forall i=1,\ldots,N,
\label{ACPGSXiZiPartitioned}
\end{align}
where the dimensions and  component orders of subvectors
$\{X_{i,l}, Z_{l}\}$ are independent of $i$:
$X_{i,l}\in\mathbb{R}^{m_l}$, $\sum_{l=1}^{M}m_l=n$.
It is supposed that $m_{l}$ for all $l$ have similar values.
To help solve $X_{i,l}$ through 
the block coordinate Guess-Seidel method,
 we introduce 
\begin{align}
&
X^{j,+}_{i,l}
:=(X^{j}_{i,1},\ldots, X^{j}_{i,l-1}, X_{i,l}, X^{k}_{i,l+1}, \ldots, X^{k}_{i,M}),\
X^{j,k}_{i,l}
:=(X^{j}_{i,1},\ldots, X^{j}_{i,l-1}, X^{k}_{i,l}, X^{k}_{i,l+1}, \ldots, X^{k}_{i,M}),
\notag\\&
X^{j,k+1}_{i,l}
:=(X^{j}_{i,1},\ldots, X^{j}_{i,l-1},
        X^{k+1}_{i,l}, X^{k}_{i,l+1}, \ldots, X^{k}_{i,M}),\ 
X^{k}_{i}:=(X^{k}_{i,1},\ldots, \ldots, X^{k}_{i,M}),\
X^{k}:=\{X^{k}_{1},\ldots,X^{k}_{N}\},
\label{ACPGSConstraintsithCBPartitioned}
\end{align}
where the superscripts $j,k$ denote the $j/k$-th iteration
and the subscript $(i,l)$ denotes the subblock
or the process to be updated.
In the definitions of \eqref{ACPGSConstraintsithCBPartitioned},
$X_i$ can be substituted by $Z$, $\svZp_{i}$, and $\svZn_{i}$,
respectively. 

Next, $L_i$ of \eqref{ACPGSLagranginFunctionithBC} is partitioned functionally 
according to
\begin{align}
&
 L_i
=\sum_{l=1}^{M}L_{i,l}\big(X^{k+1,+}_{i,l},Z_{l},\svZp_{i,l},\svZn_{i,l},
    \svF_i,\svG_i,\svHp_i,\svHn_i,
    \dvZp_{i,l},\dvZn_{i,l},\dvF_i,\dvG_i,\dvHp_i,\dvHn_i,\rho_i\big),
\notag\\&
 L_{i,l}\big(X^{k+1,+}_{i,l},Z_{l},\svZp_{i,l},\svZn_{i,l},
    \svF_i,\svG_i,\svHp_i,\svHn_i,
    \dvZp_{i,l},\dvZn_{i,l},\dvF_i,\dvG_i,\dvHp_i,\dvHn_i,\rho_i\big)
\notag\\
=\,&
f(X^{k+1,+}_{i,l})
    +\big\langle  \dvZp_{i,l}, X_{i,l}-Z_{l}+\svZp_{i,l}\big\rangle
    +\big\langle  \dvZn_{i,l}, Z_{l}-X_{i,l}+\svZn_{i,l}\big\rangle
    +\big\langle  \dvF_{i},  \pcF_{i}(X^{k+1,+}_{i,l})+\svF_{i}\big\rangle
    \notag\\&
    +\big\langle  \dvG_{i},  \pcG_{i}(X^{k+1,+}_{i,l})+\svG_i\big\rangle
    +\big\langle  \dvHp_{i}, \pcH_{i}(X^{k+1,+}_{i,l})+\svHp_{i}\big\rangle
    +\big\langle  \dvHn_{i},-\pcH_{i}(X^{k+1,+}_{i,l})+\svHn_{i}\big\rangle
    \notag\\&
    +\frac{\rho_i}{2}\Big(
            \big\Vert X_{i,l}-Z_{l}+\svZp_{i,l} \big\Vert^2
            +\big\Vert Z_{l}-X_{i,l}+\svZn_{i,l} \big\Vert^2
            +\big\Vert \pcF_{i}(X^{k+1,+}_{i,l})+\svF_{i} \big\Vert^2
            +\big\Vert \pcG_{i}(X^{k+1,+}_{i,l})+\svG_{i} \big\Vert^2
            \notag\\&\hskip12mm
            +\big\Vert \pcH_{i}(X^{k+1,+}_{i,l})+\svHp_{i} \big\Vert^2
            +\big\Vert -\pcH_{i}(X^{k+1,+}_{i,l})+\svHn_{i} \big\Vert^2
    \Big).
\label{ACPGSLagranginFunctionithBClthSubblock}
\end{align}
Here, 
$\dvF_{i}$, $\dvG_{i}$, $\dvHp_{i}$, and $\dvHn_{i}$ are the dual variables 
in the $i$-CB.
$\dvF_{i}\geq 0$ is taken to make
$L_{i,l}$ in \eqref{ACPGSLagranginFunctionithBClthSubblock}
a convex function of $X^{}_{i,l}$.

\section{Update Rules for Primal Variables}
\label{sec:UpdatePrimal}
We update the primal variable $X_{i}$
 by applying the block coordinate Gauss-Seidel method,
the proximal point method, and ADMM,
on the basis of \eqref{ACPGSLagranginFunctionithBClthSubblock}.
Firstly,
under $i\in\{1,\ldots,N\}$ and $l\in \{1,\ldots,M\}$ fixed, at iteration $k$,
 $X_{i,l}$  of the $(i,l)$-subblock is updated through
\begin{align} 
X^{k+1}_{i,l}=
\argmin_{-u_{l}\leq X_{i,l}\leq u_{l}}\s
    \Big\{&
         L_{i,l}\big(X^{k+1,+}_{i,l},Z^{k}_{l},\svZp^{k}_{i,l},\svZn^{k}_{i,l},
                    \svF^{k}_i,\svG^{k}_i,\svHp^{k}_i,\svHn^{k}_i,
                    \dvZp^{k}_{i,l},\dvZn^{k}_{i,l},
                    \dvF^{k}_i,\dvG^{k}_i,\dvHp^{k}_i,\dvHn^{k}_i,\rho_i\big)
    \notag\\[-6pt]&
        +\proxXNormOcoef^{k}_{i,l}\big\Vert X^{}_{i,l}-X^{k}_{i,l} \big\Vert_{1}
        +\frac{\proxXNormTcoef^{k}_{i,l}}{2}\big\Vert X^{}_{i,l}-X^{k}_{i,l} \big\Vert^{2}
    \Big\},
\label{ACPGSProximalXilA}
\end{align}
where the double proximal terms are employed: 
$\proxXNormOcoef^{k}_{i,l}$ is a non-negative control parameter
associated with the 1-norm and
$\proxXNormTcoef^{k}_{i,l}$ is a positive control parameter associated with the 2-norm.
The reasons for the double proximal forms are to become clear. The values of the proximal control parameters
$\proxXNormOcoef^{k}_{i,l}$
and $\proxXNormTcoef^{k}_{i,l}$ are to be estimated.

Secondly, based on \eqref{ACPGSLagranginFunctionithBClthSubblock},
$Z$ is updated through
\begin{align}
Z^{k+1}_{l}=
\argmin_{-u_{l}\leq Z_{l}\leq u_{l}} 
        \sum_{i=1}^{N}\Big(&
             \big\langle  \dvZp^{k}_{i,l}, X^{k+1}_{i,l}-Z_{l}+\svZp^{k}_{i,l} \big\rangle
            +\big\langle  \dvZn^{k}_{i,l}, Z_{l}-X^{k+1}_{i,l}+\svZn^{k}_{i,l} \big\rangle
            \notag\\[-6pt]&
            +\frac{\rho_{i}}{2}\big(
                                     \big\Vert X^{k+1}_{i,l}-Z_{l}+\svZp^{k}_{i,l} \big\Vert^2
                                    +\big\Vert Z_{l}-X^{k+1}_{i,l}+\svZn^{k}_{i,l} \big\Vert^2
                                    \big)
            +\frac{\tau^{k}}{2}\Vert Z_{l}-Z^{k}_{l} \Vert^2\Big),
\label{ACPGSProximalZl}
\end{align}
where $\tau^{k}$ is a positive proximal control parameter whose value is to be fixed.

Considering that the summation operation in \eqref{ACPGSProximalZl}
may be too big to be implemented in a single process,
 the following simple rudimentary procedure may be designed to realize it:
($\big(\sum_{j=1}^{0} (\tau^{k}+2\rho_{j})\big) Z_{0,l}$ is set to zero.)
\begin{align}
&
\text{In the $(i,l)$-process, $i=1,\ldots,N-1$},
\notag\\&
\hskip7mm \text{$\Big(\sum_{j=1}^{i} (\tau^{k}+2\rho_{j})\Big) Z_{i,l}:=
\Big(\sum_{j=1}^{i-1} (\tau^{k}+2\rho_{j})\Big) Z_{i-1,l}
        +2\rho_{i}X^{k+1}_{i,l}
             +\rho_{i}(\svZp^{k}_{i,l}-\svZn^{k}_{i,l})+\dvZp^{k}_{i,l}-\dvZn^{k}_{i,l}
                        +\tau^{k}Z^{k}_{l}$,}
             \notag\\&\hskip47mm 
             \text{and pass to the $(i+1,l)$-process.}
\notag\\&
\text{In the $(N,l)$-process,}
\notag\\&\hskip7mm
\text{$\Big(\sum_{j=1}^{N} (\tau^{k}+2\rho_{j})\Big) Z_{N,l}:=
\Big(\sum_{j=1}^{N-1} (\tau^{k}+2\rho_{j})\Big) Z_{N-1,l}
        +2\rho_{N}X^{k+1}_{N,l}
             +\rho_{N}(\svZp^{k}_{N,l}-\svZn^{k}_{N,l})+\dvZp^{k}_{N,l}-\dvZn^{k}_{N,l}
                        +\tau^{k}Z^{k}_{l}$,} 
 \notag\\& \hskip7mm
\text{$Z^{k+1}_{l}:=P_{[-u_{l},u_{l}]}(Z_{N,l})$ and pass to the $(i,l)$-processes for all
                    $i=1,\ldots,N-1$.}
\label{ACPGSProximalZkAlgorithm}
\end{align}
Here, the symbols $Z_{i,l}$ are used as the intermediate variables
to help presentation, with $i$ indicating the computation done in the $i$-CB,
$P_{[-u_{l},u_{l}]}$ is the projection into $[-u_{l},u_{l}]$.

Since all the slack variables hold for the whole $i$-CB,
 $X^{k+1}_{i}$ and $Z^{k+1}$ are used to update them.
Fix $i\in\{1,\ldots,N\}$.
\begin{align} 
\svZp^{k+1}_{i}=
\argmin_{0\leq \svZp_{i}\leq \usvZp_{Y_{i}}}\s
\Big\{
 \big\langle \dvZp^{k}_{i}, X^{k+1}_{i}-Z^{k+1}_{}+\svZp_{i} \big\rangle
+\frac{\rho_{i}}{2}\big\Vert X^{k+1}_{i}-Z^{k+1}_{}+\svZp_{i} \big\Vert^2
+\frac{\svZpcoef^{k}_{i}}{2}\big\Vert \svZp_{i}-\svZp^{k}_{i} \big\Vert^2
    \Big\},
\label{ACPGSProximalsvZpiA}
\end{align}
\vspace*{-4mm}
\begin{align} 
\svZn^{k+1}_{i}=
\argmin_{0\leq \svZn_{i}\leq \usvZn_{Y_{i}}}\s
\Big\{
 \big\langle \dvZn^{k}_{i}, Z^{k+1}_{}-X^{k+1}_{i}+\svZn_{i} \big\rangle
+\frac{\rho_{i}}{2}\big\Vert Z^{k+1}_{}-X^{k+1}_{i}+\svZn_{i} \big\Vert^2
+\frac{\svZncoef^{k}_{i}}{2}\big\Vert \svZn_{i}-\svZn^{k}_{i} \big\Vert^2
    \Big\},
\label{ACPGSProximalsvZniA}
\end{align}
\vspace*{-4mm}
\begin{align}
\svF^{k+1}_{i}=
\argmin_{0\leq \svF_{i}\leq \usvF_{Y_{i}}}\s
\Big\{\,&
 \big\langle \dvF^{k}_{i}, \pcF_{i}(X^{k+1}_{i})+\svF_{i} \big\rangle
+\frac{\rho_{i}}{2}\big\Vert \pcF_{i}(X^{k+1}_{i})+\svF^{}_{i} \big\Vert^2
+\frac{\svFcoef^{k}_{i}}{2}\big\Vert \svF_{i}-\svF^{k}_{i} \big\Vert^2
    \Big\},
\label{ACPGSProximalsvFiA}
\end{align}
\vspace*{-4mm}
\begin{align}
\svG^{k+1}_{i}=
\argmin_{0\leq \svG_{i}\leq \usvG_{Y_{i}}}\s
\Big\{\,&
 \big\langle \dvG^{k}_{i}, \pcG_{i}(X^{k+1}_{i})+\svG_{i} \big\rangle
+\frac{\rho_{i}}{2} \big\Vert \pcG_{i}(X^{k+1}_{i})+\svG_{i} \big\Vert^2
+\frac{\svGcoef^{k}_{i}}{2}\big\Vert \svG_{i}-\svG^{k}_{i} \big\Vert^2
    \Big\},
\label{ACPGSProximalsvGiA}
\end{align}
\vspace*{-4mm}
\begin{align}
\svHp^{k+1}_{i}=
\argmin_{0\leq \svHp_{i}\leq \usvHp_{Y_{i}}}\s
\Big\{
 \big\langle \dvHp^{k}_{i}, \pcH_{i}(X^{k+1}_{i})+\svHp_{i} \big\rangle
+\frac{\rho_{i}}{2} \big\Vert \pcH_{i}(X^{k+1}_{i})+\svHp_{i} \big\Vert^2
+\frac{\svHpcoef^{k}_{i}}{2}\big\Vert \svHp_{i}-\svHp^{k}_{i} \big\Vert^2
    \Big\},
\label{ACPGSProximalsvHpiA}
\end{align}
and
\begin{align} 
\svHn^{k+1}_{i}=
\argmin_{0\leq \svHn_{i}\leq \usvHn_{Y_{i}}}\s
\Big\{
 \big\langle \dvHn^{k}_{i}, -\pcH_{i}(X^{k+1}_{i})+\svHn_{i} \big\rangle
+\frac{\rho_{i}}{2} \big\Vert -\pcH_{i}(X^{k+1}_{i})+\svHn_{i} \big\Vert^2
+\frac{\svHncoef^{k}_{i}}{2} \big\Vert \svHn_{i}-\svHn^{k}_{i} \big\Vert^2
    \Big\}.
\label{ACPGSProximalsvHniA}
\end{align}
Here, $\svZpcoef^{k}_{i}$,  $\svZncoef^{k}_{i}$,
   $\svFcoef^{k}_{i}$, $\svGcoef^{k}_{i}$,
 $\svHpcoef^{k}_{i}$, and $\svHncoef^{k}_{i}$
 are the positive proximal control parameters to be fixed.

\section{Dual Updates and Convergence Analysis}
\label{sec:UpdateDual}

To analyze the convergence of the algorithm composed of 
\eqref{ACPGSProximalXilA}, \eqref{ACPGSProximalZl},
\eqref{ACPGSProximalsvZpiA} through \eqref{ACPGSProximalsvHniA}
and introduce the dual updates,
we apply the first-order characterization of convex functions
to the functions involved in the primal updates above.

First, application of Fermat's rule to \eqref{ACPGSProximalXilA},
 \eqref{ACPGSProximalZl},
\eqref{ACPGSProximalsvZpiA} through \eqref{ACPGSProximalsvHniA} gives
\begin{align} 
&
-\nabla_{X_{i,l}}\Big\{ 
     \big\langle  \dvF^{k}_{i},\pcF_{i}(X^{k+1,+}_{i,l}) \big\rangle
  +\frac{\rho_i}{2}\Big(
     \big\Vert X_{i,l}-Z^{k}_{l}+\svZp^{k}_{i,l} \big\Vert^2
    +\big\Vert Z^{k}_{l}-X_{i,l}+\svZn^{k}_{i,l} \big\Vert^2
    +\big\Vert \pcF_{i}(X^{k+1,+}_{i,l})+\svF^{k}_{i} \big\Vert^2
    \notag\\[-0pt]&\hskip15mm
    +\big\Vert \pcG_{i}(X^{k+1,+}_{i,l})+\svG^{k}_{i} \big\Vert^2
    +\big\Vert \pcH_{i}(X^{k+1,+}_{i,l})+\svHp^{k}_{i} \big\Vert^2
    +\big\Vert -\pcH_{i}(X^{k+1,+}_{i,l})+\svHn^{k}_{i} \big\Vert^2
    \Big)
    \notag\\[-0pt]&\hskip15mm
+\frac{\proxXNormTcoef^{k}_{i,l}}{2}\big\Vert X^{}_{i,l}-X^{k}_{i,l} \big\Vert^{2}
\Big\}_{X^{k+1}_{i,l}}
\in
 \partial_{X_{i,l}}
    \Big\{
     f_{l}X^{}_{i,l}
    +\big\langle \dvZp^{k}_{i,l}-\dvZn^{k}_{i,l}, X_{i,l} \big\rangle
    +\big\langle \dvG^{k}_{i}, \pcG_{i,l}X^{}_{i,l}\big\rangle
    \notag\\[-0pt]&\hskip60mm
    +\big\langle \dvHp^{k}_{i}-\dvHn^{k}_{i},\pcH_{i,l}X^{}_{i,l}\big\rangle
    +\proxXNormOcoef^{k}_{i,l}\big\Vert X^{}_{i,l}-X^{k}_{i,l} \big\Vert_{1}
    +\Indicator_{[-u_{l}, u_{l}]}(X_{i,l})
\Big\}_{X^{k+1}_{i,l}},
\label{SolvingXil}
\end{align}
\vspace*{-4mm}
\begin{align}
-\frac{1}{N}\sum_{i=1}^{N}\Big(
         \dvZn^{k}_{i,l}-\dvZp^{k}_{i,l}
        +\rho_{i}\big(
                      2Z^{k+1}_{l}-2X^{k+1}_{i,l}-\svZp^{k}_{i,l}+\svZn^{k}_{i,l}
                                    \big)
        +\tau^{k}(Z^{k+1}_{l}-Z^{k}_{l})
        \Big)
\in \partial_{Z_{l}}\Indicator_{[-u_{l},u_{l}]}(Z^{k+1}_{l}),
\label{SolvingZk}
\end{align}
\vspace*{-4mm}
\begin{align}
-\Big(
 \dvZp^{k}_{i}
+\rho_{i}(X^{k+1}_{i}-Z^{k+1}_{}+\svZp^{k+1}_{i})
+\svZpcoef^{k}_{i}(\svZp^{k+1}_{i}-\svZp^{k}_{i})
\Big)
\in
\partial_{\svZp_{i}}\Indicator_{[0,\usvZp_{Y_{i}}]}(\svZp^{k+1}_{i}),
\label{SolvingsvZpk}
\end{align}
\vspace*{-4mm}
\begin{align}
-\Big( \dvZn^{k}_{i}
    +\rho_{i}( Z^{k+1}_{}-X^{k+1}_{i}+\svZn^{k+1}_{i} )
    +\svZncoef^{k}_{i}( \svZn^{k+1}_{i}-\svZn^{k}_{i} )
\Big)
\in
\partial_{\svZn_{i}}\Indicator_{[0,\usvZn_{Y_{i}}]}(\svZn^{k+1}_{i}),
\label{SolvingsvZnk}
\end{align}
\vspace*{-4mm}
\begin{align}
-\Big( \dvF^{k}_{i}
+\rho_{i}( \pcF_{i}(X^{k+1}_{i})+\svF^{k+1}_{i} )
+\svFcoef^{k}_{i}( \svF^{k+1}_{i}-\svF^{k}_{i} )
\Big)
\in
\partial_{\svF_{i}}\Indicator_{[0,\usvF_{Y_{i}}]}(\svF^{k+1}_{i}),
\label{SolvingsvFk}
\end{align}
\vspace*{-4mm}
\begin{align}
-\Big(
 \dvG^{k}_{i}
+\rho_{i}( \pcG_{i}(X^{k+1}_{i})+\svG^{k+1}_{i} )
+\svGcoef^{k}_{i}( \svG^{k+1}_{i}-\svG^{k}_{i} )
\Big)
\in
\partial_{\svG_{i}}\Indicator_{[0,\usvG_{Y_{i}}]}(\svG^{k+1}_{i}),
\label{SolvingsvGk}
\end{align}
\vspace*{-4mm}
\begin{align}
-\Big(
 \dvHp^{k}_{i}
+\rho_{i}( \pcH_{i}(X^{k+1}_{i})+\svHp^{k+1}_{i} )
+\svHpcoef^{k}_{i}( \svHp^{k+1}_{i}-\svHp^{k}_{i} )
\Big)
\in
\partial_{\svHp_{i}}\Indicator_{[0,\usvHp_{Y_{i}}]}(\svHp^{k+1}_{i}),
\label{SolvingsvHpk}
\end{align}
and
\begin{align} 
-\Big(
 \dvHn^{k}_{i}
+\rho_{i}( -\pcH_{i}(X^{k+1}_{i})+\svHn^{k+1}_{i} )
+\svHncoef^{k}_{i}( \svHn^{k+1}_{i}-\svHn^{k}_{i} )
\Big)
\in
\partial_{\svHn_{i}}\Indicator_{[0,\usvHn_{Y_{i}}]}(\svHn^{k+1}_{i}).
\label{SolvingsvHnk}
\end{align}
Here, 
$\Indicator_{[-u_{l}, u_{l}]}(X_{i,l})$ and so on are the indicator functions,
 $f(Z)=\sum_{l}f_{l}Z_{l}$,
 $\pcG_{i}(Z)=\sum_{l}\pcG_{i,l}Z_{l}+\pcG_{i,0}$,
and $\pcH_{i}(Z)=\sum_{l}\pcH_{i,l}Z_{l}+\pcH_{i,0}$ are used.

Next, the first-order characteristics of the convex functions involved in
\eqref{SolvingXil} through \eqref{SolvingsvHnk} can be constructed
straight-forwardly,
under $X_{i,l}=X^{k}_{i,l}$, $Z_{l}=Z^{k}_{l}$, $\svZp_{i}=\svZp^{k}_{i}$,
 $\svZn_{i}=\svZn^{k}_{i}$, 
 $\svF_{i}=\svF^{k}_{i}$, $\svG_{i}=\svG^{k}_{i}$,
 $\svHp_{i}=\svHp^{k}_{i}$, and $\svHn_{i}=\svHn^{k}_{i}$.
 With the help of $X^{k+1,k}_{i,l}=X^{k+1,k+1}_{i,l-1}$,
 summation of the characteristics results in the following.
\begin{lemma} 
The primal updates~\eqref{ACPGSProximalXilA}, \eqref{ACPGSProximalZl},
\eqref{ACPGSProximalsvZpiA} through \eqref{ACPGSProximalsvHniA} yield, for all $k$,
\begin{align}
L^{k}
\geq
L^{k+1}
+D^{k}
+P^{k}
+\sum_{i}\sum_{l}
    \Big( \proxXNormOcoef^{k}_{i,l}\,\big\Vert X^{k}_{i,l}-X^{k+1}_{i,l}\big\Vert_{1}
         +U^{k}_{i,l}\Big).
\label{GlobalInequality}
\end{align}
Here, $L^{k}$ is the augmented Lagrangian function sequence defined through
\begin{align*}
L^{k}_{i}:=\,&
     f(X^{k}_{i})
    +\big\langle \dvZp^{k}_{i}, X^{k}_{i}-Z^{k}_{}+\svZp^{k}_{i} \big\rangle
    +\big\langle \dvZn^{k}_{i}, Z^{k}_{}-X^{k}_{i}+\svZn^{k}_{i} \big\rangle
    +\big\langle \dvF^{k}_{i}, \pcF_{i}(X^{k}_{i})+\svF^{k}_{i} \big\rangle
    \notag\\&
    +\big\langle \dvG^{k}_{i}, \pcG_{i}(X^{k}_{i})+\svG^{k}_{i} \big\rangle
    +\big\langle \dvHp^{k}_{i}, \pcH_{i}(X^{k}_{i})+\svHp^{k}_{i}\big\rangle
    +\big\langle \dvHn^{k}_{i}, -\pcH_{i}(X^{k}_{i})+\svHn^{k}_{i} \big\rangle
    \notag\\&
    +\frac{\rho_{i}}{2}\Big(\Vert X^{k}_{i}-Z^{k}_{}+\svZp^{k}_{i} \Vert^2 
            +\Vert Z^{k}_{}-X^{k}_{i}+\svZn^{k}_{i} \Vert^2 
            +\Vert \pcF_{i}(X^{k}_{i})+\svF^{k}_{i} \Vert^2 
            +\Vert \pcG_{i}(X^{k}_{i})+\svG^{k}_{i} \Vert^2 
            \notag\\[-3pt]&\hskip11mm
            +\Vert \pcH_{i}(X^{k}_{i})+\svHp^{k}_{i} \Vert^2 
            +\Vert -\pcH_{i}(X^{k}_{i})+\svHn^{k}_{i} \Vert^2 
    \Big),\ \ 
 L^{k}:=\sum_{i}L^{k}_{i};
\end{align*}
the quantity $D^{k}$ is defined through
\begin{align*}
D^{k}_{i}
:=\,&
 \big\langle \dvZp^{k}_{i}-\dvZp^{k+1}_{i},  X^{k+1}_{i}-Z^{k+1}_{}+\svZp^{k+1}_{i} \big\rangle
+\big\langle \dvZn^{k}_{i}-\dvZn^{k+1}_{i}, Z^{k+1}_{}-X^{k+1}_{i}+\svZn^{k+1}_{i} \big\rangle
\notag\\&
+\big\langle \dvF^{k}_{i}-\dvF^{k+1}_{i}, \pcF_{i}(X^{k+1}_{i})+\svF^{k+1}_{i} \big\rangle
+\big\langle \dvG^{k}_{i}-\dvG^{k+1}_{i}, \pcG_{i}(X^{k+1}_{i})+\svG^{k+1}_{i} \big\rangle
\notag\\&
+\big\langle \dvHp^{k}_{i}-\dvHp^{k+1}_{i}, \pcH_{i}(X^{k+1}_{i})+\svHp^{k+1}_{i}\big\rangle
+\big\langle \dvHn^{k}_{i}-\dvHn^{k+1}_{i},-\pcH_{i}(X^{k+1}_{i})+\svHn^{k+1}_{i} \big\rangle,\ \ 
D^{k}:=\sum_{i}D^{k}_{i},
\end{align*}
which helps to develop the update rules for the duals,
$\{\dvZp^{k+1}_{i}, \dvZn^{k+1}_{i}, \dvF^{k+1}_{i}, 
\dvG^{k+1}_{i}, \dvHp^{k+1}_{i}, \dvHn^{k+1}_{i}\}$;
the quantity $P^{k}$ contains all the quadratic terms nonnegative,
\begin{align*}
 P^{k}_{i}
 :=\,&
\sum_{l}\proxXNormTcoef^{k}_{i,l}\big\Vert X^{k+1}_{i,l}-X^{k}_{i,l} \big\Vert^2
+\rho_i\big\Vert X^{k+1}_{i}-X^{k}_{i} \big\Vert^2
+(\tau^{k}+\rho_{i})\big\Vert Z^{k+1}_{}-Z^{k}_{}\big\Vert^2
+\big(\svZpcoef^{k}_{i}+\frac{\rho_{i}}{2}\big)\big\Vert \svZp^{k+1}_{i}-\svZp^{k}_{i}\big\Vert^2
\notag\\[-4pt]&
+\big(\svZncoef^{k}_{i}+\frac{\rho_{i}}{2}\big)\big\Vert\svZn^{k+1}_{i}-\svZn^{k}_{i}\Vert^2
+\big(\svFcoef^{k}_{i}+\frac{\rho_{i}}{2}\big)
                \big\Vert \svF^{k+1}_{i}-\svF^{k}_{i} \big\Vert^2
+\big(\svGcoef^{k}_{i}+\frac{\rho_{i}}{2}\big)\big\Vert \svG^{k+1}_{i}-\svG^{k}_{i} \big\Vert^2
\notag\\&
+\big(\svHpcoef^{k}_{i}+\frac{\rho_{i}}{2}\big)\big\Vert \svHp^{k+1}_{i}-\svHp^{k}_{i}\big\Vert^2
+\big(\svHncoef^{k}_{i}+\frac{\rho_{i}}{2}\big)\big\Vert \svHn^{k+1}_{i}-\svHn^{k}_{i} \big\Vert^2
\notag\\&
+\frac{\rho_{i}}{2}\sum_{l}
  \Big(              \big\Vert \pcF_{i}(X^{k+1,k+1}_{i,l})-\pcF_{i}(X^{k+1,k}_{i,l}) \big\Vert^2
                    +\big\Vert\pcG_{i,l}(X^{k+1}_{i,l}-X^{k}_{i,l}) \big\Vert^2
                    +2\big\Vert \pcH_{i,l}(X^{k+1}_{i,l}-X^{k}_{i,l}) \big\Vert^2
    \Big),\ \ P^{k}:=\sum_{i} P^{k}_{i};
\end{align*}
the quantity $U^{k}_{i,l}$ is defined through
\begin{align*}
&
 U^{k}_{i,l}=U_{i,l}(\dvF^{k}_{i},\svF^{k}_{i},
                X^{k+1,k+1}_{i,l},X^{k+1,k}_{i,l})
 \notag\\
:=\,&
 \big\langle \dvF^{k}_{i}+\rho_i\big(\pcF_{i}(X^{k+1,k+1}_{i,l})+\svF^{k}_{i}\big),
                \pcF_{i}(X^{k+1,k}_{i,l})-\pcF_{i}(X^{k+1,k+1}_{i,l})
                -(X^{k}_{i,l}-X^{k+1}_{i,l})^{T}\,\nabla_{X_{i,l}}\pcF_{i}(X^{k+1,k+1}_{i,l})
            \big\rangle,
\end{align*}
contributed by the nonlinearity of the constraints of inequality
 $\pcF_{i}(Z)\leq 0$.
\label{GlobalInequalitykLemma}
\end{lemma}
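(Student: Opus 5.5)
The plan is to telescope the decrease of the augmented Lagrangian through the sequence of partial updates. First $X$ is updated block by block: $X^{k+1,k}_{i,l}\to X^{k+1,k+1}_{i,l}$ for $l=1,\ldots,M$. Then $Z$ is updated, then each slack variable, and finally the duals. Write
\begin{align*}
L^{k}-L^{k+1}=\big(L^{k}-L(X^{k+1},Z^{k},Y^{k},\mu^{k})\big)
+\big(L(X^{k+1},Z^{k},\ldots)-L(X^{k+1},Z^{k+1},\ldots)\big)+\cdots+\big(L(\cdot,\mu^{k})-L(\cdot,\mu^{k+1})\big).
\end{align*}
Here $Y$ collectively denotes the six slack families. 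The last bracket, a change in the duals only with all primal and slack at step $k+1$, is by linearity of $L$ in the duals exactly $D^{k}$. Thus the task reduces to showing that each primal or slack step decreases $L$ by at least the corresponding terms of $P^{k}$, plus the $\proxXNormOcoef$ and $U$ terms for the $X$-steps. The identity $X^{k+1,k}_{i,l}=X^{k+1,k+1}_{i,l-1}$ makes the $X$-subblock differences telescope over $l$ into $L^{k}_i$ at $X^{k}_i$ minus its value at $X^{k+1}_i$.

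For each quadratic-proximal step, e.g.\ \eqref{ACPGSProximalsvZpiA} with optimality condition \eqref{SolvingsvZpk}, use the standard three-point argument. The objective restricted to $\svZp_i$ is $\phi(y)=\langle\dvZp^{k}_i,a+y\rangle+\frac{\rho_i}{2}\Vert a+y\Vert^2$, with $a=X^{k+1}_i-Z^{k+1}$. Expanding the quadratic exactly at $y^{k+1}$ gives
\[
\phi(y^{k})-\phi(y^{k+1})=\langle\nabla\phi(y^{k+1}),y^{k}-y^{k+1}\rangle+\tfrac{\rho_i}{2}\Vert y^{k}-y^{k+1}\Vert^2 .
\]
Pairing \eqref{SolvingsvZpk} with the subgradient inequality of the indicator at the feasible point $y^{k}$ gives $\langle\nabla\phi(y^{k+1}),y^{k}-y^{k+1}\rangle\ge\svZpcoef^{k}_i\Vert y^{k+1}-y^{k}\Vert^2$. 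This produces the coefficient $\svZpcoef^{k}_i+\rho_i/2$, and the other five slack families are handled identically. The $Z$-step is treated the same way using \eqref{SolvingZk}. There the Hessian $\sum_i 2\rho_i$, together with the $\tau^{k}$ proximal term, yields $(\tau^{k}+\rho_i)\Vert Z^{k+1}-Z^{k}\Vert^2$ per $i$, after the $1/N$ normalization in \eqref{SolvingZk} is undone.

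For the $X_{i,l}$ step, split $L_{i,l}$ into the part treated as a subdifferential in \eqref{SolvingXil}, namely linear terms, the $1$-norm proximal term and the box indicator, and the smooth part. Using convexity of the nonsmooth part at $X^{k}_{i,l}$ (the $1$-norm term vanishes there) gives the $\proxXNormOcoef^{k}_{i,l}\Vert X^{k}_{i,l}-X^{k+1}_{i,l}\Vert_1$ gain. The affine-composed quadratic terms for the consensus, $\pcG$ and $\pcH$ constraints expand exactly and give the $\rho_i$, $\pcG_{i,l}$ and $2\pcH_{i,l}$ squared terms of $P^{k}$, and the $\proxXNormTcoef$ term contributes $\proxXNormTcoef^{k}_{i,l}\Vert\cdot\Vert^2$.

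The main obstacle is the nonlinear $\pcF$ terms $\langle\dvF^{k}_i,\pcF_i\rangle+\frac{\rho_i}{2}\Vert\pcF_i+\svF^{k}_i\Vert^2$. These are not quadratic in $X_{i,l}$ with constant Hessian, since $\pcF$ is itself quadratic. I would handle them by writing $\frac{\rho_i}{2}\Vert A\Vert^2-\frac{\rho_i}{2}\Vert B\Vert^2=\rho_i\langle B,A-B\rangle+\frac{\rho_i}{2}\Vert A-B\Vert^2$, with $A=\pcF_i(X^{k+1,k})+\svF^{k}_i$ and $B=\pcF_i(X^{k+1,k+1})+\svF^{k}_i$. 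The combined linear part then equals $\langle\dvF^{k}_i+\rho_iB,\pcF_i(X^{k+1,k})-\pcF_i(X^{k+1,k+1})\rangle$. Subtracting the gradient term $\langle\dvF^{k}_i+\rho_iB,(X^{k}_{i,l}-X^{k+1}_{i,l})^{T}\nabla\pcF_i(X^{k+1,k+1})\rangle$, which is absorbed by the optimality condition \eqref{SolvingXil}, leaves precisely $U^{k}_{i,l}$ plus $\frac{\rho_i}{2}\Vert\pcF_i(X^{k+1,k+1})-\pcF_i(X^{k+1,k})\Vert^2$. Care is needed so that no convexity of $\Vert\pcF_i+\svF\Vert^2$ is assumed; the exact identity avoids this, and the sign of $U^{k}_{i,l}$ is deliberately left unresolved in the lemma. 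Summing all pieces over $l$ and $i$ gives \eqref{GlobalInequality}.
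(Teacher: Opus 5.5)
Your proposal is correct and is essentially the paper's argument. The paper likewise writes the first-order inequalities coming from the optimality conditions \eqref{SolvingXil}--\eqref{SolvingsvHnk} at the new iterates, evaluates them at the previous iterates, and sums them using $X^{k+1,k}_{i,l}=X^{k+1,k+1}_{i,l-1}$, with the dual change giving $D^{k}$ by linearity. Your exact identity for the $F_{i}$-terms is precisely what produces $U^{k}_{i,l}$ and the $\frac{\rho_i}{2}\Vert F_{i}(X^{k+1,k+1}_{i,l})-F_{i}(X^{k+1,k}_{i,l})\Vert^2$ part of $P^{k}$. It also avoids relying on convexity of $L_{i,l}$ in $X_{i,l}$, which the paper asserts from $\mu^{F}_{i}\geq 0$ alone but which fails in general because of the $\Vert F_{i}+Y^{F}_{i}\Vert^2$ penalty.
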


Motivated by the structure of $D^{k}_{i}$,
the following descent models are proposed to update the dual variables,
\begin{align}
&
  \dvZp^{k+1}_{i}:=\dvZp^{k}_{i}
  -\dvZpcoef^{k+1}_{i}\big(X^{k+1}_{i}-Z^{k+1}_{}+\svZp^{k+1}_{i}\big),\ \
  \dvZn^{k+1}_{i}:=\dvZn^{k}_{i}
  -\dvZncoef^{k+1}_{i}\big(Z^{k+1}_{}-X^{k+1}_{i}+\svZn^{k+1}_{i}\big),
\notag\\&
   \dvF^{k+1}_{i}:=\dvF^{k}_{i}
  -\dvFcoef^{k+1}_{i}\big(\pcF_{i}(X^{k+1}_{i})+\svF^{k+1}_{i}\big),\ \
  \dvG^{k+1}_{i}:=\dvG^{k}_{i}
  -\dvGcoef^{k+1}_{i}\big(\pcG_{i}(X^{k+1}_{i})+\svG^{k+1}_{i}\big),
\notag\\&
  \dvHp^{k+1}_{i}:=\dvHp^{k}_{i}
  -\dvHpcoef^{k+1}_{i}\big(\pcH_{i}(X^{k+1}_{i})+\svHp^{k+1}_{i}\big),\ \
  \dvHn^{k+1}_{i}:=\dvHn^{k}_{i}
  -\dvHncoef^{k+1}_{i}\big(-\pcH_{i}(X^{k+1}_{i})+\svHn^{k+1}_{i}\big).
\label{DualUpdateRules}
\end{align}
The dual coefficients, 
$\dvZpcoef^{k+1}_{i}$, $\dvZncoef^{k+1}_{i}$,
 $\dvFcoef^{k+1}_{i}$, $\dvGcoef^{k+1}_{i}$,
$\dvHpcoef^{k+1}_{i}$, and $\dvHncoef^{k+1}_{i}$
are the semi-positively definite diagonal matrices specified through
\begin{align}
&
\dvZpcoef^{k+1}_{i,j}=
\l\{\begin{array}{ll}
\dvZpcoef_{i}, & 
                     \text{if $\tdvZp^{k+1}_{i,j}:=\dvZp^{k}_{i,j}
                     -\dvZpcoef_{i} \big(X^{k+1}_{i}-Z^{k+1}_{}+\svZp^{k+1}_{i}\big)_{j}
                        \in [0, \udvZp_{\mu_{i},j}]$,}\\[3pt]
0, & \text{else},
    \end{array}\r. 
\notag\\&
\dvZncoef^{k+1}_{i,j}=
\l\{\begin{array}{ll}
\dvZncoef_{i}, & 
                     \text{if $\tdvZn^{k+1}_{i,j}:=\dvZn^{k}_{i,j}
                     -\dvZncoef_{i} \big(Z^{k+1}_{}-X^{k+1}_{i}+\svZn^{k+1}_{i}\big)_{j}
                        \in [0, \udvZn_{\mu_{i},j}]$,}\\[3pt]
0, & \text{else},
    \end{array}\r. 
\notag\\&
\dvFcoef^{k+1}_{i,j}=
\l\{\begin{array}{ll}
\dvFcoef_{i}, & 
                     \text{if $\tdvF^{k+1}_{i,j}:=\dvF^{k}_{i,j}
                     -\dvFcoef_{i} \big(\pcF_{i}(X^{k+1}_{i})+\svF^{k+1}_{i}\big)_{j}
                        \in [0, \udvF_{\mu_{i},j}]$,}\\[3pt]
0, & \text{else},
    \end{array}\r. 
\notag\\&
\dvGcoef^{k+1}_{i,j}=
\l\{\begin{array}{ll}
\dvGcoef_{i}, & 
                     \text{if $\tdvG^{k+1}_{i,j}:=\dvG^{k}_{i,j}
                     -\dvGcoef_{i} \big(\pcG_{i}(X^{k+1}_{i})+\svG^{k+1}_{i}\big)_{j}
                        \in [0, \udvG_{\mu_{i},j}]$,}\\[3pt]
0, & \text{else},
    \end{array}\r. 
\notag\\&
\dvHpcoef^{k+1}_{i,j}=
\l\{\begin{array}{ll}
\dvHpcoef_{i}, & 
                     \text{if $\tdvHp^{k+1}_{i,j}:=\dvHp^{k}_{i,j}
                     -\dvHpcoef_{i} \big(\pcH_{i}(X^{k+1}_{i})+\svHp^{k+1}_{i}\big)_{j}
                        \in [0, \udvHp_{\mu_{i},j}]$,}\\[3pt]
0, & \text{else},
    \end{array}\r. 
\notag\\&
\dvHncoef^{k+1}_{i,j}=
\l\{\begin{array}{ll}
\dvHncoef_{i}, & 
                     \text{if $\tdvHn^{k+1}_{i,j}:=\dvHn^{k}_{i,j}
                     -\dvHncoef_{i} \big(-\pcH_{i}(X^{k+1}_{i})+\svHn^{k+1}_{i}\big)_{j}
                        \in [0, \udvHn_{\mu_{i},j}]$,}\\[3pt]
0, & \text{else},
    \end{array}\r. 
\label{DualUpdateRulesdvEFGBoundsj}
\end{align}
where $\dvZpcoef_{i}$,  $\dvZncoef_{i}$,
 $\dvFcoef_{i}$, $\dvGcoef_{i}$,
 $\dvHpcoef_{i}$, and $\dvHncoef_{i}$
are the positive scalar constants.
These descent dual models with built-in bounds are adopted
to help make the limits of the sequences satisfying the constraints
listed in \eqref{ACPGSConstraintsithCBSlacked},
in contrast to ascent iterations \cite{Bertsekas1996};
they are made possible by the slack variables, the proximal terms,
  and the initial and parameter values
 to be addressed below.
 The treatment is like that of
 \cite{SunSun2024} which involves highly nonconvex constraints.

Substitution of \eqref{DualUpdateRules} into
Lemma~\ref{GlobalInequalitykLemma} gives
\begin{lemma}
\begin{align}
L^{k}
\geq
L^{k+1}
+\sum_{i}\sum_{j}D^{k}_{i,j}
+P^{k}
+\sum_{i}\sum_{l}
    \Big( \proxXNormOcoef^{k}_{i,l}\,\big\Vert X^{k}_{i,l}-X^{k+1}_{i,l}\big\Vert_{1}
         +U^{k}_{i,l}\Big)\ \ \forall k,
\label{GlobalInequalitydvk}
\end{align}
\vspace*{-6mm}
\begin{align}
L^{0}
\geq\,&
 L^{K}
+\sum_{k=0}^{K-1}\sum_{i}\sum_{j} D^{k}_{i,j}
+\sum_{k=0}^{K-1}P^{k}
+\sum_{k=0}^{K-1}\sum_{i}\sum_{l}
    \Big( \proxXNormOcoef^{k}_{i,l}\,\Vert X^{k}_{i,l}-X^{k+1}_{i,l}\Vert_{1}
         +U^{k}_{i,l}\Big)\ \ \forall K\in\mathbb{N},
\label{GlobalInequalitydvSum2K}
\end{align}
\vspace*{-2mm}
where
\begin{align}
D^{k}_{i,j}
:=\,&
 \dvZpcoef^{k+1}_{i,j}\big\vert \big(X^{k+1}_{i}-Z^{k+1}_{}+\svZp^{k+1}_{i}\big)_{j}\big\vert^2
+\dvZncoef^{k+1}_{i,j}\big\vert \big(Z^{k+1}_{}-X^{k+1}_{i}+\svZn^{k+1}_{i}\big)_{j}\big\vert^2
+\dvFcoef^{k+1}_{i,j}\big\vert \big(\pcF_{i}(X^{k+1}_{i})+\svF^{k+1}_{i}\big)_{j}\big\vert^2
\notag\\&
+\dvGcoef^{k+1}_{i,j}\big\vert \big(\pcG_{i}(X^{k+1}_{i})+\svG^{k+1}_{i}\big)_{j}\big\vert^2
+\dvHpcoef^{k+1}_{i,j}\big\vert \big(\pcH_{i}(X^{k+1}_{i})+\svHp^{k+1}_{i}\big)_{j}\big\vert^2
+\dvHncoef^{k+1}_{i,j}\big\vert \big(-\pcH_{i}(X^{k+1}_{i})+\svHn^{k+1}_{i}\big)_{j}\big\vert^2.
\label{DkijDefn}
\end{align}
\label{GlobalInequalitydvSum2KLemma}
\end{lemma}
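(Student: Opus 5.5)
The plan is to take Lemma~\ref{GlobalInequalitykLemma} as given and only rewrite the term $D^{k}=\sum_i D^{k}_{i}$ using the dual update rules \eqref{DualUpdateRules}. Nothing else in \eqref{GlobalInequality} changes, so \eqref{GlobalInequalitydvk} follows once we show
\begin{align*}
D^{k}_{i}=\sum_{j}D^{k}_{i,j}\ \ \forall i,k,
\end{align*}
with $D^{k}_{i,j}$ as in \eqref{DkijDefn}. Summing the one-step inequality over $k$ then gives \eqref{GlobalInequalitydvSum2K}.

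\emph{Step 1 (the one-step identity).} Consider the first term of $D^{k}_{i}$. By \eqref{DualUpdateRules},
\begin{align*}
\dvZp^{k}_{i}-\dvZp^{k+1}_{i}=\dvZpcoef^{k+1}_{i}\big(X^{k+1}_{i}-Z^{k+1}_{}+\svZp^{k+1}_{i}\big).
\end{align*}
By \eqref{DualUpdateRulesdvEFGBoundsj}, $\dvZpcoef^{k+1}_{i}$ is diagonal with entries $\dvZpcoef^{k+1}_{i,j}$, each equal to $\dvZpcoef_{i}$ or $0$. Hence the inner product is the componentwise sum
\begin{align*}
\big\langle \dvZpcoef^{k+1}_{i}r,r\big\rangle=\sum_{j}\dvZpcoef^{k+1}_{i,j}\,\vert r_{j}\vert^{2},\ \ r=X^{k+1}_{i}-Z^{k+1}_{}+\svZp^{k+1}_{i}.
\end{align*}
The same computation applies verbatim to the five other pairs: $(\dvZn,\ Z-X+\svZn)$, $(\dvF,\ \pcF_i+\svF)$, $(\dvG,\ \pcG_i+\svG)$, $(\dvHp,\ \pcH_i+\svHp)$ and $(\dvHn,\ -\pcH_i+\svHn)$.

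\emph{Indexing caveat.} The six residual vectors have different lengths: $n$ for the consensus residuals, and the numbers of rows of $\pcF_i$, $\pcG_i$, $\pcH_i$ for the others. To make $\sum_j D^{k}_{i,j}$ meaningful, I would read $j$ as ranging over the union of the component indices. A term is taken to be absent (zero) when $j$ exceeds the length of the corresponding vector. This is the only point needing care, and it is purely notational.

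\emph{Step 2 (collecting).} Collecting the six diagonal sums gives $D^{k}_{i}=\sum_j D^{k}_{i,j}$, hence $D^{k}=\sum_i\sum_j D^{k}_{i,j}$. Substituting into \eqref{GlobalInequality} yields \eqref{GlobalInequalitydvk}. Note that each $D^{k}_{i,j}\ge 0$, since the coefficients are nonnegative.

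\emph{Step 3 (telescoping).} Sum \eqref{GlobalInequalitydvk} over $k=0,\ldots,K-1$. The left-hand $L^{k}$ terms cancel against the right-hand $L^{k+1}$ terms, leaving $L^{0}$ on the left and $L^{K}$ on the right. The remaining terms are just the accumulated sums in \eqref{GlobalInequalitydvSum2K}. This holds for every $K\in\mathbb{N}$, since the one-step inequality holds for all $k$.

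I expect no real obstacle. The whole argument depends on Lemma~\ref{GlobalInequalitykLemma} holding with the inner products oriented as $\langle \mu^{k}-\mu^{k+1},\text{residual}^{k+1}\rangle$, which matches the sign convention of the descent update exactly. The only thing to double-check is that same-index residuals pair with their own dual differences, which they do by construction.
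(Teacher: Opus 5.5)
Your proposal is correct and follows the paper's route: the paper obtains this lemma simply by substituting the descent dual updates \eqref{DualUpdateRules} into $D^{k}$ of Lemma~\ref{GlobalInequalitykLemma}, which turns each pairing of a dual difference with its own residual into the diagonal weighted sum of squares $\sum_{j}D^{k}_{i,j}$, and then telescoping over $k=0,\ldots,K-1$. Your reading of the index $j$ as symbolic across residual vectors of different lengths matches the paper's own convention.
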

\vspace*{-4mm}

The inequalities~\eqref{GlobalInequalitydvk}
and \eqref{GlobalInequalitydvSum2K} are viewed as the global inequalities
that provide a basis to discuss the feasibility conditions of the algorithm,
 to estimate the parameter values,
 and to initialize the primal and dual sequences.
 To this end, it is necessary to have
\vspace*{-1mm}
\begin{align}
{J}^{k}:=
 \sum_{i}\sum_{j}D^{k}_{i,j}
+P^{k}
+\sum_{i}\sum_{l}\Big(\proxXNormOcoef^{k}_{i,l}\,\Vert X^{k}_{i,l}-X^{k+1}_{i,l}\Vert_{1}
                +U^{k}_{i,l} \Big)> 0
                \ \ \forall k,
\label{UkBounded}
\end{align}
such that the sequence
$\{L^{k}\}_{k}$ is bounded from above by $L^{0}$,
monotonically decreasing,
and bounded from below in a certain manner.
For the sake of convenience, we introduce the extended sequences,
 \begin{align}
  &
  \eZp^{k}_{i}:=X^{k}_{i}-Z^{k}_{}+\svZp^{k}_{i},\ \
  \eZn^{k}_{i}:=Z^{k}_{}-X^{k}_{i}+\svZn^{k}_{i},\ \
  \eF^{k}_{i}:=\pcF_{i}(X^{k}_{i})+\svF^{k}_{i},\ \
  \eG^{k}_{i}:=\pcG_{i}(X^{k}_{i})+\svG^{k}_{i},
  \notag\\&
  \eHp^{k}_{i}:=\pcH_{i}(X^{k}_{i})+\svHp^{k}_{i},\ \
  \eHn^{k}_{i}:=-\pcH_{i}(X^{k}_{i})+\svHn^{k}_{i}.
  \label{ExtendedSequecnesDfn}
 \end{align}

Firstly, to satisfy \eqref{UkBounded} with the data available at $k$,
 the proximal parameter $\proxXNormOcoef^{k}_{i,l}$
is specified through an one-step-delayed response,
\begin{align}
 \proxXNormOcoef^{k}_{i,l}
 =
\l\{\begin{array}{ll}
      0,                      & \text{if $U^{k-1}_{i,l} \geq 0$},\\[4pt]
\Gamma^{k}_{i,l}\,\vert U^{k-1}_{i,l} \vert/\Vert X^{k}_{i,l}-X^{k-1}_{i,l}\Vert_{1},  & \text{else}.
     \end{array}
\r.
\label{proxXNormOcoefkilDefn}
\end{align}
$\Gamma^{k}_{i,l}\s\geq\s 1$ is constant and preferred to be moderate
to help hold \eqref{GlobalInequalitydvk};
$\proxXNormOcoef^{0}_{i,l}= 0$.
Function~\eqref{proxXNormOcoefkilDefn} is well-defined, according to
\begin{align}
 \big\vert U^{k}_{i,l}\big\vert
 \leq
\GammaF_{i}
       \Big( \big\Vert \dvF^{k}_{i}\big\Vert_{\infty}
            +\rho_{i}\big\Vert \pcF_{i}(X^{k+1,k+1}_{i,l})+\svF^{k}_{i} \big\Vert_{\infty}\Big)
        \big\Vert X^{k+1}_{i,l}-X^{k}_{i,l} \big\Vert_{1},
\label{GlobalInequalityUkilBounded}
\end{align}
where the constant $\GammaF_{i}$ depends on
 the structure of $\pcF_{i}$ and $[-u,u]$ and its value
can be controlled by scaling down $\pcF_{i}$.
Because of this step delay, the proximal term
plays the desired role for \eqref{UkBounded}
to hold under $\{U^{k-1}_{i,l}<0$, $U^{k}_{i,l}<0\}$
owing to the continuity of $U^{k}_{i,l}$
defined in Lemma~\ref{GlobalInequalitykLemma}.
(One might remove the zero branch from \eqref{proxXNormOcoefkilDefn}.)
The term $P^{k}$ helps to counterbalance
 $U^{k}_{i,l}<0$ under $U^{k-1}_{i,l}\geq 0$
 in order to meet \eqref{UkBounded}. Specifically,
 while the number of $\pcF(Z)\leq 0$ is fixed,
 the size of $N$ (the number of $X_i-Z=0$) and the presence of
$\{\pcG(Z)\leq 0$, $\pcH(Z)=0\}$
(and their possibly redundant use in different $i$-CBs if necessary) promote
the magnitude of $P^{k}>0$, and therefore, $J^{k}>0$.
Here, we face an issue of balance:
 Big $N$ and redundant application of $\{\pcG(Z)\leq 0$, $\pcH(Z)=0\}$
increase the computational size of the algorithm;
however, they may lead to the possibility of
$\proxXNormOcoef^{k}_{i,l}=0$, and this makes it simpler to solve
  \eqref{ACPGSProximalXilA}.
The quadratic convexity of $\pcF_{i}$ yields
\begin{align}
&
\pcF_{i}(X^{k+1,k}_{i,l})-\pcF_{i}(X^{k+1,k+1}_{i,l})
    -(X^{k}_{i,l}-X^{k+1}_{i,l})^{T}\,\nabla_{X_{i,l}}\pcF_{i}(X^{k+1,k+1}_{i,l})
\notag\\
=\,&
\frac{1}{2}(X^{k}_{i,l}-X^{k+1}_{i,l})^{T}\,
        \nabla_{X_{i,l}}\nabla_{X_{i,l}}\pcF_{i}(X^{k+1,k+1}_{i,l})\,
              (X^{k}_{i,l}-X^{k+1}_{i,l}),
    \ \ \nabla_{X_{i,l}}\nabla_{X_{i,l}}\pcF_{i}(X^{k+1,k+1}_{i,l})\succeq 0,
\label{Norm1ProximalLimtB}
\end{align}
and
\begin{align}
&
 U^{k}_{i,l}=
 \frac{1}{2}\big\langle \dvF^{k}_{i}+\rho_i\big(\pcF_{i}(X^{k+1,k+1}_{i,l})+\svF^{k}_{i}\big),
                (X^{k}_{i,l}-X^{k+1}_{i,l})^{T}\,
        \nabla_{X_{i,l}}\nabla_{X_{i,l}}\pcF_{i}(X^{k+1,k+1}_{i,l})\,
              (X^{k}_{i,l}-X^{k+1}_{i,l})
            \big\rangle.
\label{Ukil}
\end{align}
Expression~\eqref{Ukil} indicates how to produce $U^{k}_{i,l}\geq 0$
with initial values $\{\dvF^{0}_{i}$,
$\pcF_{i}(X^{0}_{i})+\svF^{0}_{i}\}$ being greatly positive
and other initial and parameter values adequate.
It then follows from \eqref{GlobalInequalitydvk}
and \eqref{GlobalInequalitydvSum2K} that
the sequence $\{L^{k}\}_{k}$ decreases monotonically
and is bounded from above by $L^{0}$.
In the special case of linear programming,
$\proxXNormOcoef^{k}_{i,l}=0$ for all $k$
and $\{L^{k}\}_{k}$ decreases monotonically.

Secondly, the sequence $\{L^{k}\}_{k}$ is bounded from below
 whose lowest value possible
but unlikely is estimated as follows.
Consider the paired sum of the dual and penalty terms,
$\dvF^{k}_{i,j} (\pcF_{i}(X^{k+1,+}_{i,l})+\svF^{k}_{i})_{j}
    +(\rho_{i}/2)\vert (\pcF_{i}(X^{k+1,+}_{i,l})+\svF^{k}_{i})_{j}\vert^2$
    in $L_{i,l}$ whose absolute minimum is achieved at
    $\{\dvF^{k}_{i,j}=\udvF_{\mu_{i},j}$,
    $(\pcF_{i}(X^{k+1,+}_{i,l})+\svF^{k}_{i})_{j}=-\udvF_{\mu_{i},j}/\rho_{i}\}$;
    summation of such minima of all the paired sums in $L_{i,M}$
    for all $i$ yields the lowest possible bound of $L^{k}$.
    Therefore,
$\lim_{k\rightarrow\infty}L^{k}$ exists.
To make the present algorithm work, we need to raise the lower bound to
$\overline{\lim}_{k'\rightarrow\infty}\sum_{i} f(X^{k'}_{i})<L^{k}$ $\forall k$,
this is to be achieved operationally
through adequate initial and parameter values.

Thirdly,
the $X^{k+1}_{i,l}$-update, \eqref{ACPGSProximalXilA}
involves the couplings among all the extended component sequences
through the paired sums of the dual and penalty terms like
$\dvF^{k}_{i,j} (\pcF_{i}(X^{k+1,+}_{i,l})+\svF^{k}_{i})_{j}
    +(\rho_{i}/2)\vert (\pcF_{i}(X^{k+1,+}_{i,l})+\svF^{k}_{i})_{j}\vert^2$.
To place all the penalty terms on an equal footing,
we scale the absolute components,
$\{\vert(\pcF_{i}(Z))_j\vert$,
$\vert(\pcG_{i}(Z))_j\vert$,
$\vert(\pcH_{i}(Z))_j\vert:$
$Z\in[-u,u]$, $\forall j$, $\forall i\}$
such that the ranges of the second and third have similar size
and the ranges of the first have relatively lower size
because of the fourth-order dependence on $Z$ and the need of $J^{k}>0$.
It is expected that the $X^{k+1}_{i,l}$-update from
such a preconditioning treatment
 does not bias toward minimizing any specific
 paired sum overall,
 which is illustrated mathematically as follows:
\begin{align}
 \phi_1(x)+\phi_2(x)\geq  \phi_1(x^{\ast})+\phi_2(x^{\ast}),\
 \phi_1(x)\geq \phi_1(x^{\ast}_1),\
 \phi_2(x)\geq \phi_2(x^{\ast}_2)\ \ \forall x;\ \
 \phi_1(x^{\ast})\geq \phi_1(x^{\ast}_1),\
 \phi_2(x^{\ast})\geq \phi_2(x^{\ast}_2).
 \label{CouplingMeanProperty}
\end{align}
This coupling mean property regulates
the behaviors of $\{X^{k}_{i}\}_{k}$ directly
and the extended component sequences,
along with the other update rules;
it coordinates the evolution paces of the extended component sequences
such that no one changes much faster and toward extremely negative
on the whole;
specifically, an extended component element is less negative,
if it is negative.
Consequently, it is expected to help prevent the dual component sequences
from being trapped in the neighborhoods of their upper bounds
and to raise the lower bound of $\{L^{k}\}_{k}$ to
the one stated above.
(Considering that the quantities in the set
of the absolute components listed above
have different number of components, the same index $j$ is used
in a symbolic manner for brevity.)

Fourthly,
we adopt a modified version of
 Proposition~2.1 of \cite{Mangasarian1984} as Lemma~\ref{MangasarianLemma}
 to help address the issues regarding the value of $\rho_{i}$
 and its role to affect the behavior of the extended sequences.
\begin{lemma}
Consider
\begin{align*}
  \min_{x}\big\{g(x)+ \alpha   Q(x)+\Indicator_{[l_{x}, u_{x}]}(x)\big\},
\end{align*}
where $g$ is convex, $Q$ convex and nonnegative,
and $\alpha$ is a positive constant.
Let $x_i$ be the solution for $\alpha_i$
with $\alpha_1<\alpha_2$.
Then,
\begin{align}
 g(x_1)\leq g(x_2),\
 Q(x_2)\leq Q(x_1),\
 g(x_1)+ \alpha_1 Q(x_1)\leq g(x_2)+ \alpha_2 Q(x_2).
 \label{MangasarianIneaualities}
\end{align}
\label{MangasarianLemma}
\end{lemma}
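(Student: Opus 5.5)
The argument uses only the two optimality statements, compares them crosswise, and adds. Write $\Phi_\alpha(x):=g(x)+\alpha Q(x)+\Indicator_{[l_{x},u_{x}]}(x)$. Since $x_1$ minimizes $\Phi_{\alpha_1}$ and $x_2$ minimizes $\Phi_{\alpha_2}$, both lie in $[l_{x},u_{x}]$, so the indicator terms vanish at these points. Convexity of $g$ and $Q$ plays no role beyond ensuring the minimizers are meaningful; existence of $x_1,x_2$ is part of the hypothesis.

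The first step is to record the two optimality inequalities:
\begin{align*}
g(x_1)+\alpha_1 Q(x_1)\leq g(x_2)+\alpha_1 Q(x_2),\qquad
g(x_2)+\alpha_2 Q(x_2)\leq g(x_1)+\alpha_2 Q(x_1).
\end{align*}
Adding them cancels $g$ and gives $(\alpha_2-\alpha_1)\big(Q(x_2)-Q(x_1)\big)\leq 0$. Since $\alpha_1<\alpha_2$, this yields $Q(x_2)\leq Q(x_1)$, which is the second claim of \eqref{MangasarianIneaualities}.

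Next, substitute $Q(x_2)\leq Q(x_1)$ back into the first optimality inequality. This gives $g(x_1)-g(x_2)\leq \alpha_1\big(Q(x_2)-Q(x_1)\big)\leq 0$, using $\alpha_1>0$, so $g(x_1)\leq g(x_2)$.

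For the third claim, start from the first optimality inequality, $g(x_1)+\alpha_1Q(x_1)\leq g(x_2)+\alpha_1Q(x_2)$. Since $Q\geq0$ and $\alpha_1<\alpha_2$, we have $\alpha_1Q(x_2)\leq\alpha_2Q(x_2)$, and the chain closes to $g(x_1)+\alpha_1Q(x_1)\leq g(x_2)+\alpha_2Q(x_2)$. This is the only place where nonnegativity of $Q$ is used.

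I expect no real obstacle. The one point to watch is the direction of the inequalities when dividing by $\alpha_2-\alpha_1>0$ and multiplying by $\alpha_1>0$. If minimizers are not unique, the argument holds for any choice of $x_1,x_2$, since it uses only the optimality inequalities.
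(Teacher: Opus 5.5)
Your argument is correct: adding the two crosswise optimality inequalities gives $(\alpha_2-\alpha_1)\big(Q(x_2)-Q(x_1)\big)\le 0$, and the other two claims follow from the $\alpha_1$-optimality inequality together with $\alpha_1>0$ and $Q\ge 0$, exactly as you do. The paper gives no proof of its own and simply adopts the lemma from Proposition~2.1 of \cite{Mangasarian1984}, whose standard proof is this same crosswise comparison, so your route is essentially the same; as you note, it needs neither convexity nor uniqueness of the minimizers.
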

\vspace*{-4mm}
To apply Lemma~\ref{MangasarianLemma} to \eqref{ACPGSProximalXilA},
\eqref{ACPGSProximalZl},
and \eqref{ACPGSProximalsvZpiA} through \eqref{ACPGSProximalsvHniA},
we take
$\{\proxXNormTcoef^{k}_{i,l}$,
$\svZpcoef^{k}_{i}$,
$\svZncoef^{k}_{i}$,
$\svFcoef^{k}_{i}$,
$\svGcoef^{k}_{i}$,
$\svHpcoef^{k}_{i}$,
$\svHncoef^{k}_{i}\}\propto \rho_{i}\propto \rho_{1}$,
$\tau^{k}\propto \rho_{1}$,
and $\alpha\propto \rho_{1}/2$;
the forms of $g$ and $Q$ can be identified easily.
It follows that, on the whole,
greater $\rho_{i}$ tends to yield smaller
$\{\Vert \eZp^{k+1}_{i} \Vert$,
$\Vert \eZn^{k+1}_{i} \Vert$,
$\Vert \eF^{k+1}_{i} \Vert$,
$\Vert \eG^{k+1}_{i} \Vert$,
$\Vert \eHp^{k+1}_{i} \Vert$,
$\Vert \eHn^{k+1}_{i} \Vert\}$
and greater
$\big\{\s\sum_{i=1}^{N}\langle  \dvZp^{k}_{i}, \eZp^{k+1}_{i} \rangle$
$+\sum_{i=1}^{N}\langle  \dvZn^{k}_{i}, \eZn^{k+1}_{i} \rangle$,
$\langle  \dvF^{k}_{i}, \eF^{k+1}_{i}\rangle$,
$\langle  \dvG^{k}_{i}, \eG^{k+1}_{i}\rangle$,
$\langle  \dvHp^{k}_{i},\eHp^{k+1}_{i}\rangle
+\langle  \dvHn^{k}_{i},\eHn^{k+1}_{i}\rangle\big\}$.
Combination of the two implies that
\begin{align}
&
 \text{on the whole, greater $\rho_{i}$ tends to make}\
 \big\{\eZp^{k+1}_{i,j},
\eZn^{k+1}_{i,j},
\eF^{k+1}_{i,j},
\eG^{k+1}_{i,j},
\eHp^{k+1}_{i,j},
\eHn^{k+1}_{i,j}\big\} \
 \text{closer to zero}
 \notag\\&
 \text{(and less negative if one is negative).}
 \label{EffectsOfGreaterPenalty}
\end{align}
These consequences are desirable, concerning
whether $\{L^{k}\}_{k}$ is bounded from below by the raised lower bound
and whether the dual component sequences are trapped
in the neighborhoods of their upper bounds or not,
as illustrated by the case of $\eF^{k+1}_{i,j}<0$:
$\eF^{k+1}_{i,j}$ is closer to zero and less negative,
 consequently, $\dvF^{k+1}_{i,j}$ has a smaller increase
 and has less chance to be trapped in a neighborhood of its upper bound
 according to
 \eqref{DualUpdateRules} and \eqref{DualUpdateRulesdvEFGBoundsj}.
Further, greater $\rho_{i}$ yields greater $L^{0}$, as desired.
A negative impact of too great a $\rho_{i}$ computationally is
that the evolution of $\{L^{k}\}_{k}$ is too slow,
as suggested by Lemma~\ref{MangasarianLemma}.
The question is how great $\rho_{i}$ needs to be,
in order to make the algorithm produce optimal limits.

Fifthly,
the 2-norm proximal terms in the updates~\eqref{ACPGSProximalXilA},
\eqref{ACPGSProximalZl}, and
\eqref{ACPGSProximalsvZpiA} through \eqref{ACPGSProximalsvHniA}
provide flexibility for initialization of the primal sequences,
especially about the relationship between $X^{0}_{i}$ and $Z^{0}$
like $X^{0}_{i}=Z^{0}$.
The proximal control parameters are fixed through
\begin{align}
\big\{\s\proxXNormTcoef^{k}_{i,l},
\svZpcoef^{k}_{i},
\svZncoef^{k}_{i},
\svFcoef^{k}_{i},
\svGcoef^{k}_{i},
\svHpcoef^{k}_{i},
\svHncoef^{k}_{i}\big\}\propto \rho_{i},\ \
\tau^{k}\propto \frac{1}{N}\sum\rho_{i},
\label{ProximalControlParametersEst}
\end{align}
where the proportional coefficients are equal to or greater than 1.
They keep the extended component elements at the $k$-th
 and the $(k+1)$-th iterations closely correlated
 under great $\rho_{i}$.
The values of the proximal parameters
control the paces of evolution of the primal and extended sequences.
With great
$\{\rho_{i}$,$\svZp^{0}_{i,j}$,$\svZn^{0}_{i,j}$,$\svF^{0}_{i,j}$,$\svG^{0}_{i,j}$,$\svHp^{0}_{i,j}$,$\svHn^{0}_{i,j}$,$\dvZp^{0}_{i,j}$,
$\dvZn^{0}_{i,j}$,$\dvF^{0}_{i,j}$,$\dvG^{0}_{i,j}$,$\dvHp^{0}_{i,j}$,$\dvHn^{0}_{i,j}\}$,
the component sequences,
$\{\svZp^{k}_{i,j}$,$\svZn^{k}_{i,j}$,$\svF^{k}_{i,j}$,$\svG^{k}_{i,j}$,$\svHp^{k}_{i,j}$,$\svHn^{k}_{i,j}\}_{k}$
and $\{\eZp^{k}_{i,j}$,$\eZn^{k}_{i,j}$,
$\eF^{k}_{i,j}$,$\eG^{k}_{i,j}$,$\eHp^{k}_{i,j}$,$\eHn^{k}_{i,j}\}_{k}$
tend to change slowly (the latter with relatively small magnitudes
according to \eqref{EffectsOfGreaterPenalty})
and $\{\dvZp^{k}_{i,j}$,
$\dvZn^{k}_{i,j}$,$\dvF^{k}_{i,j}$,$\dvG^{k}_{i,j}$,$\dvHp^{k}_{i,j}$,$\dvHn^{k}_{i,j}\}_{k}$
evolve relatively slow too.

Sixthly,
a dual component sequence, say $\{\dvF^{k}_{i,j}\}_{k}$
is trapped in a neighborhood of its upper bound,
if there exists $K$ such that
$\{\dvF^{k+1}_{i,j}=\dvF^{K}_{i,j}: \tdvF^{k+1}_{i,j}>\udvF_{\mu_{i},j}\}_{k\geq K}$.
To provide the conditions that
all the dual component sequences,
$\{\dvZp^{k}_{i,j}$,$\dvZn^{k}_{i,j}$,$\dvF^{k}_{i,j}$,$\dvG^{k}_{i,j}$,$\dvHp^{k}_{i,j}$,$\dvHn^{k}_{i,j}\}_{k}$
are not trapped in the neighborhoods of their upper bounds,
we resort to the following measures
on the basis of the preceding discussions.
(A)
As suggested by \eqref{EffectsOfGreaterPenalty},
the penalty parameter, $\rho_{i}$ is adequately great
 to make the extended component elements in
$\{\eZp^{k}_{i,j}$,$\eZn^{k}_{i,j}$,$\eF^{k}_{i,j}$,$\eG^{k}_{i,j}$,$\eHp^{k}_{i,j}$,$\eHn^{k}_{i,j}\}_{k}$
relatively small in magnitude
(and less negative if one is negative).
The proximal control parameter values fixed through
\eqref{ProximalControlParametersEst}
 make the extended component elements between the $k$-th and
 the $k+1$-th closely correlated.
Therefore, great
$\{\rho_{i}$,$\svZp^{0}_{i,j}$,$\svZn^{0}_{i,j}$,$\svF^{0}_{i,j}$,$\svG^{0}_{i,j}$,$\svHp^{0}_{i,j}$,$\svHn^{0}_{i,j}\}$
generate great $L^{0}$
and tend to make $\{\eZp^{k}_{i,j}$,$\eZn^{k}_{i,j}$,$\eF^{k}_{i,j}$,$\eG^{k}_{i,j}$,$\eHp^{k}_{i,j}$,$\eHn^{k}_{i,j}\}$
non-negative, less negative, or close to zero;
the less negative tendency is enhanced by the coupling mean property;
the less negative elements tend to prevent the corresponding dual component sequences
from being trapped in the neighborhoods of their upper bounds.
(B)
The structures of the global inequalities~\eqref{GlobalInequalitydvSum2K}
require that
\begin{align}
\big\{\dvZpcoef_{i},\dvZncoef_{i},\dvFcoef_{i},\dvGcoef_{i},
\dvHpcoef_{i},\dvHncoef_{i}\big\}\lll \rho_{i}.
\label{DualControlParametersEstimated}
\end{align}
(C)
On the one hand, the differences,
$\{\dvZp^{k}_{i}-\dvZn^{k}_{i},\dvHp^{k}_{i}-\dvHn^{k}_{i}\}$
count in the $X^{k+1}_{i,l}$-update~\eqref{ACPGSProximalXilA}
and the $Z^{k+1}_{l}$-update~\eqref{ACPGSProximalZl};
it suggests the choice of greatly positive values for
$\{\dvZp^{0}_{i}$,\,$\dvZn^{0}_{i}$,\,$\dvHp^{0}_{i}$,\,$\dvHn^{0}_{i}\}$,
which makes $L^{0}$ great.
On the other hand,
$\{\dvF^{\infty}_{i,j}$,\,$\dvG^{\infty}_{i,j}\}=0$ hold
for almost all $i$, $j$,
except the relatively few that are active;
these limiting conditions suggest
 moderately positive values for $\{\dvF^{0}_{i}$,\,$\dvG^{0}_{i}\}$,
 which may help
the non-active ones of $\{\dvF^{k}_{i,j},\dvG^{k}_{i,j}\}_{k}$
approaching zero
within a reasonable number of iterations.
On the basis of the forms of \eqref{SolvingsvZpk}
through \eqref{SolvingsvHnk},
the upper bounds for the dual variables are taken as
\begin{align}
\big\{\udvZp_{\mu_{i}},\udvZn_{\mu_{i}},\udvF_{\mu_{i}},\udvG_{\mu_{i}},
\udvHp_{\mu_{i}},\udvHn_{\mu_{i}}\big\}
\propto \rho_{i}
\big\{\usvZp_{Y_{i}},\usvZn_{Y_{i}},\usvF_{Y_{i}},\usvG_{Y_{i}},\usvHp_{Y_{i}},
        \usvHn_{Y_{i}}\big\},
\label{UpperBoundsDualVariables}
\end{align}
where the proportional coefficients take a value of 5, say,
to have the bounds great.
(E)
Together with the couplings among all the primal and dual
through the update rules and $L^{k+1}<L^{k}$,
 the collective impacts of the above-listed measures
 are expected to make the extended and dual component sequences
 coevolving slowly and interacting with each other effectively
 such that all the dual component sequences are not trapped
  in the neighborhoods of their upper bounds.
  Moreover, it is interesting to explore the possibility
  that all the extended component sequences tend to oscillate around zero
at great $k$ such that all the dual component sequences are not trapped.

Finally,
considering the scenario where all the dual component sequences
are not trapped in the neighborhoods of their upper bounds
but some are trapped in the regions
other than the neighborhoods of their upper bounds like
$\{\dvF^{k+1}_{i,j^{\ast}}=\dvF^{K}_{i,j^{\ast}}:
        \tdvF^{k+1}_{i,j^{\ast}}<0\}_{k\geq K}$,
the corresponding sequence $\{L^{k}\}_{k}$
supposedly has a limit as low as possible,
which comes from the monotonic decrease of the sequence
and adequate initial and parameter values discussed above.
The content becomes clear
by Proof~\ref{ProofOfPDDeltaLimitsLemma}(A)(c)iii.

\begin{assumption}
$\qquad$
\begin{enumerate}
\item[(A)]
The sequence $\{L^{k}\}_{k}$ decreases monotonically, bounded from below.
All the dual component sequences,
$\{\dvZp^{k}_{i,j}$,
$\dvZn^{k}_{i,j}$,\,$\dvF^{k}_{i,j}$,\,$\dvG^{k}_{i,j}$,\,$\dvHp^{k}_{i,j}$,\,$\dvHn^{k}_{i,j}\}_{k}$
are not trapped in the neighborhoods of their upper bounds.
Next, the sequence $\{L^{k}\}_{k}$ has its limit as low as possible.
To this end, a range of choices for the initial
and parameter values are supposed to exist;
these conditions are
called the feasibility conditions,
and the algorithm is said to be feasible,
if the feasibility conditions hold.

\item[(B)]
 The upper bounds for the slack variable sequences
   in \eqref{ACPGSProximalsvZpiA}
   through \eqref{ACPGSProximalsvHniA}
   are out of reach, via the choices of
$\usvZp_{Y_{i}}\geq 2 u$,
$\usvZn_{Y_{i}}\geq 2 u$,
$\usvF_{Y_{i},j}\geq \max\vert(\pcF_{i}(Z))_j\vert$,
$\usvG_{Y_{i},j}\geq \max\vert(\pcG_{i}(Z))_j\vert$,
$\usvHp_{Y_{i},j}\geq \max\vert(\pcH_{i}(Z))_j\vert$,
$\usvHn_{Y_{i},j}\geq \max\vert(\pcH_{i}(Z))_j\vert$.
These upper bounds can be set simply and specifically
by taking into account the structures of the convex functions involved.

\item[(C)]
  All the parameters are positive and finite.
  Except those assigned explicitly by \eqref{DualUpdateRulesdvEFGBoundsj}
  and \eqref{proxXNormOcoefkilDefn},
  the parameter sequences are bounded from above and below
  by positive finite values.

\item[(D)]
Each and every primal component sequence
has at most finitely many cluster points,
or equivalently, its cluster points are isolated.
This is viewed as part of the feasibility conditions.
(This condition can be augmented or replaced in the following manner.
Consider a primal component sequence $\{y^{k}\}_{k\geq K}$
which has a subsequence $\{y^{k_j}\}_{j}$ convergent to zero.
Motivated by the structure of $P^{k}$ and its role in the algorithm,
an index difference sequence,
$\{k_{j+1}-k_{j}\}_{j}$ is introduced.
The index sequence is bounded from above by $\Delta k$, say.)
\end{enumerate}
\label{AssumptionSetI}
\end{assumption}
\vspace*{-6mm}
\begin{lemma} 
The sequences,
\begin{align*}
 \big\{&
   X^{k}_{i}, Z^{k}, \svZp^{k}_{i}, \svZn^{k}_{i}, \svF^{k}_{i},
   \svG^{k}_{i}, \svHp^{k}_{i}, \svHn^{k}_{i},
  \dvZp^{k}_{i}, \dvZn^{k}_{i}, \dvF^{k}_{i},
           \dvG^{k}_{i}, \dvHp^{k}_{i}, \dvHn^{k}_{i},
    X^{k+1}_{i}, Z^{k+1}, \svZp^{k+1}_{i}, \svZn^{k+1}_{i},
  \svF^{k+1}_{i},
   \svG^{k+1}_{i},
   \notag\\&
   \svHp^{k+1}_{i}, \svHn^{k+1}_{i},
  \dvZp^{k+1}_{i}, \dvZn^{k+1}_{i}, \dvF^{k+1}_{i},
  \dvG^{k+1}_{i},
  \dvHp^{k+1}_{i}, \dvHn^{k+1}_{i},
  \
    i=1,\ldots, N \big\}_{k}
\end{align*}
are bounded;
there exist convergent subsequences, 
\begin{align*}
 \big\{&
   X^{k_{j}}_{i}, Z^{k_{j}}, \svZp^{k_{j}}_{i}, \svZn^{k_{j}}_{i}, \svF^{k_{j}}_{i},
   \svG^{k_{j}}_{i}, \svHp^{k_{j}}_{i}, \svHn^{k_{j}}_{i},
  \dvZp^{k_{j}}_{i}, \dvZn^{k_{j}}_{i}, \dvF^{k_{j}}_{i},
           \dvG^{k_{j}}_{i}, \dvHp^{k_{j}}_{i}, \dvHn^{k_{j}}_{i},
         X^{k_{j}+1}_{i}, Z^{k_{j}+1},
  \svZp^{k_{j}+1}_{i}, 
\notag\\&
  \svZn^{k_{j}+1}_{i},
 \svF^{k_{j}+1}_{i},
   \svG^{k_{j}+1}_{i}, \svHp^{k_{j}+1}_{i}, \svHn^{k_{j}+1}_{i},
  \dvZp^{k_{j}+1}_{i}, \dvZn^{k_{j}+1}_{i}, \dvF^{k_{j}+1}_{i},
  \dvG^{k_{j}+1}_{i},
  \dvHp^{k_{j}+1}_{i},
 \dvHn^{k_{j}+1}_{i},
 \ 
    i=1,\ldots, N \big\}_{j}
\end{align*}
and accumulation points such that for all $i$,
\begin{align}
&
\lim_{j\rightarrow\infty} X^{k_j}_{i}=X^{\infty}_{i},\
\lim_{j\rightarrow\infty} X^{k_j+1}_{i}=X^{\infty+}_{i},\
\lim_{j\rightarrow\infty} Z^{k_j}=Z^{\infty},\
\lim_{j\rightarrow\infty} Z^{k_j+1}=Z^{\infty+},\
\lim_{j\rightarrow\infty} \svZp^{k_j}_{i}=\svZp^{\infty}_{i},
\notag\\&
\lim_{j\rightarrow\infty} \svZp^{k_j+1}_{i}=\svZp^{\infty+}_{i},\
\lim_{j\rightarrow\infty} \svZn^{k_j}_{i}=\svZn^{\infty}_{i},\
\lim_{j\rightarrow\infty} \svZn^{k_j+1}_{i}=\svZn^{\infty+}_{i},\
\lim_{j\rightarrow\infty} \svF^{k_j}_{i}=\svF^{\infty}_{i},\
\lim_{j\rightarrow\infty} \svF^{k_j+1}_{i}=\svF^{\infty+}_{i},
\notag\\&
\lim_{j\rightarrow\infty} \svG^{k_j}_{i}=\svG^{\infty}_{i},\
\lim_{j\rightarrow\infty} \svG^{k_j+1}_{i}=\svG^{\infty+}_{i},\
\lim_{j\rightarrow\infty} \svHp^{k_j}_{i}=\svHp^{\infty}_{i},\
\lim_{j\rightarrow\infty} \svHp^{k_j+1}_{i}=\svHp^{\infty+}_{i},\
\lim_{j\rightarrow\infty} \svHn^{k_j}_{i}=\svHn^{\infty}_{i},
\notag\\&
\lim_{j\rightarrow\infty} \svHn^{k_j+1}_{i}=\svHn^{\infty+}_{i},\
\lim_{j\rightarrow\infty} \dvZp^{k_j}_{i}=\dvZp^{\infty}_{i},\
\lim_{j\rightarrow\infty} \dvZp^{k_j+1}_{i}=\dvZp^{\infty+}_{i},\
\lim_{j\rightarrow\infty} \dvZn^{k_j}_{i}=\dvZn^{\infty}_{i},\
\lim_{j\rightarrow\infty} \dvZn^{k_j+1}_{i}=\dvZn^{\infty+}_{i},
\notag\\&
\lim_{j\rightarrow\infty} \dvF^{k_j}_{i}=\dvF^{\infty}_{i},\
\lim_{j\rightarrow\infty} \dvF^{k_j+1}_{i}=\dvF^{\infty+}_{i},\
\lim_{j\rightarrow\infty} \dvG^{k_j}_{i}=\dvG^{\infty}_{i},\ 
\lim_{j\rightarrow\infty} \dvG^{k_j+1}_{i}=\dvG^{\infty+}_{i},\
\lim_{j\rightarrow\infty} \dvHp^{k_j}_{i}=\dvHp^{\infty}_{i},
\notag\\&
\lim_{j\rightarrow\infty} \dvHp^{k_j+1}_{i}=\dvHp^{\infty+}_{i},\
\lim_{j\rightarrow\infty} \dvHn^{k_j}_{i}=\dvHn^{\infty}_{i},\ 
\lim_{j\rightarrow\infty} \dvHn^{k_j+1}_{i}=\dvHn^{\infty+}_{i}.
\end{align}
\label{PDAccumulationPointLemma}
\end{lemma}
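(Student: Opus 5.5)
The plan is to prove boundedness of every listed sequence directly from the box constraints built into the update rules, and then to extract convergent subsequences by Bolzano--Weierstrass; the energy inequalities of Lemma~\ref{GlobalInequalitydvSum2KLemma} are not needed for this step.

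\emph{Primal and slack sequences.} Each $X^{k+1}_{i,l}$ is defined in \eqref{ACPGSProximalXilA} as an argmin over $[-u_{l},u_{l}]$. The objective there is strongly convex because $\proxXNormTcoef^{k}_{i,l}>0$, so the minimizer exists, is unique, and lies in the box. Hence $\Vert X^{k}_{i}\Vert_{\infty}\leq\Vert u\Vert_{\infty}$ for all $k\geq 1$, and $X^{0}_{i}$ is a fixed initial value. In the same way, $Z^{k+1}_{l}=P_{[-u_{l},u_{l}]}(Z_{N,l})$ by \eqref{ACPGSProximalZkAlgorithm}, so $Z^{k}\in[-u,u]$. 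The slack updates \eqref{ACPGSProximalsvZpiA} through \eqref{ACPGSProximalsvHniA} are strongly convex minimizations over the boxes $[0,\usvZp_{Y_{i}}]$, and so on, with finite upper bounds fixed by Assumption~\ref{AssumptionSetI}(B). Every slack iterate therefore lies in a fixed compact box.

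\emph{Dual sequences.} I would argue componentwise by induction on $k$ using \eqref{DualUpdateRules} and \eqref{DualUpdateRulesdvEFGBoundsj}. Suppose $\dvF^{k}_{i,j}\in[0,\udvF_{\mu_{i},j}]$ and consider the update for $k+1$. If the trial value $\tdvF^{k+1}_{i,j}$ lies in $[0,\udvF_{\mu_{i},j}]$, the coefficient is $\dvFcoef_{i}$ and $\dvF^{k+1}_{i,j}=\tdvF^{k+1}_{i,j}$. Otherwise the coefficient is $0$ and $\dvF^{k+1}_{i,j}=\dvF^{k}_{i,j}$. In both cases $\dvF^{k+1}_{i,j}\in[0,\udvF_{\mu_{i},j}]$. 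The same argument applies to $\dvZp,\dvZn,\dvG,\dvHp,\dvHn$. The base case needs the initial duals to be chosen inside $[0,\udvF_{\mu_{i}}]$ and the analogous boxes. I would state this explicitly as part of the initialization implied by Assumption~\ref{AssumptionSetI}(A). The bounds are finite because \eqref{UpperBoundsDualVariables} makes them proportional to finite $\rho_{i}$ and finite slack bounds, using Assumption~\ref{AssumptionSetI}(C).

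\emph{Shifted sequences and extraction.} The shifted sequences $\{X^{k+1}_{i},\ldots,\dvHn^{k+1}_{i}\}_{k}$ are tails of the same sequences, so they are bounded by the same constants. Stack all the listed quantities, with both index $k$ and $k+1$ and all $i=1,\ldots,N$, into one vector in a finite-dimensional space. This vector sequence lies in a product of compact boxes. Bolzano--Weierstrass then gives a single index subsequence $\{k_{j}\}$ along which every component converges, which yields the limits $X^{\infty}_{i}$, $X^{\infty+}_{i}$, and so on. Extracting jointly for the stacked vector, rather than component by component, is what ensures one common $\{k_{j}\}$ works for all the limits listed in the lemma.

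The only real obstacle is the base case for the duals. The proof needs the initial duals to lie inside their built-in bounds and the slack bounds to be finite, and these are initialization choices rather than consequences of earlier results. I would make them explicit hypotheses drawn from Assumption~\ref{AssumptionSetI}. Everything else is routine compactness.
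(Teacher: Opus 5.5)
Your argument is correct and matches the paper. The paper states this lemma without a written proof and relies on the same reasoning: every primal, slack, and dual iterate is confined to a fixed box ($[-u,u]$, $[0,\usvF_{Y_{i}}]$ and its analogues, and the built-in dual bounds), so one Bolzano--Weierstrass extraction on the stacked $k$- and $(k+1)$-iterates yields the common subsequence $\{k_{j}\}$.

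There are two small refinements. First, the dual base case you flag is not needed: each component either keeps its previous value or is set to a value in its box, so $\vert\dvF^{k}_{i,j}\vert\leq\max\{\vert\dvF^{0}_{i,j}\vert,\udvF_{\mu_{i},j}\}$ for all $k$ with no hypothesis on $\dvF^{0}_{i}$. Second, $\proxXNormTcoef^{k}_{i,l}>0$ alone does not make the $X_{i,l}$-subproblem strongly convex, because $\Vert\pcF_{i}+\svF_{i}\Vert^{2}$ can fail to be convex where $\pcF_{i}+\svF_{i}$ has negative components. You only need a minimizer to exist in the compact box, and continuity guarantees that.
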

\vspace*{-6mm}
\begin{lemma}
 The following sums are bounded.
 \vspace*{-0mm}
\begin{enumerate}
\item[(A)] $\forall i$,
\vspace*{-3mm}
\begin{align*} 
&
\sum_{k=1}^{\infty}\sum_{j}
  \dvZpcoef^{k}_{i,j}\big\vert \big(X^{k}_{i}-Z^{k}_{}+\svZp^{k}_{i}\big)_{j}\big\vert^2<\infty,\ \
\sum_{k=1}^{\infty}\sum_{j}
  \dvZncoef^{k}_{i,j}\big\vert \big(Z^{k}_{}-X^{k}_{i}+\svZn^{k}_{i}\big)_{j}\big\vert^2<\infty,
\notag\\&
\sum_{k=1}^{\infty}\sum_{j}
  \dvFcoef^{k}_{i,j}\big\vert \big(\pcF_{i}(X^{k}_{i})+\svF^{k}_{i}\big)_{j}\big\vert^2<\infty,\ \
\sum_{k=1}^{\infty}\sum_{j}
  \dvGcoef^{k}_{i,j}\big\vert \big(\pcG_{i}(X^{k}_{i})+\svG^{k}_{i}\big)_{j}\big\vert^2<\infty,
\notag\\&
\sum_{k=1}^{\infty}\sum_{j}
  \dvHpcoef^{k}_{i,j}\big\vert \big(\pcH_{i}(X^{k}_{i})+\svHp^{k}_{i}\big)_{j}\big\vert^2<\infty,\ \
\sum_{k=1}^{\infty}\sum_{j}
  \dvHncoef^{k}_{i,j}\big\vert \big(-\pcH_{i}(X^{k}_{i})+\svHn^{k}_{i}\big)_{j}\big\vert^2<\infty.
\end{align*}
\item[(B)] $\forall i$,
\vspace*{-3mm}
\begin{align*}
  &
\sum_{k=0}^{\infty}\big\Vert X^{k+1}_{i}-X^{k}_{i} \big\Vert^2<\infty, \ \
\sum_{k=0}^{\infty}\big\Vert Z^{k+1}_{}-Z^{k}_{}\big\Vert^2<\infty, \ \
\sum_{k=0}^{\infty}\big\Vert \svZp^{k+1}_{i}-\svZp^{k}_{i}\big\Vert^2<\infty, 
\notag\\&
\sum_{k=0}^{\infty}\big\Vert\svZn^{k+1}_{i}-\svZn^{k}_{i}\Vert^2<\infty,\ \
\sum_{k=0}^{\infty}\big\Vert \svF^{k+1}_{i}-\svF^{k}_{i} \big\Vert^2<\infty, \ \
\sum_{k=0}^{\infty}\big\Vert \svG^{k+1}_{i}-\svG^{k}_{i} \big\Vert^2<\infty, 
\notag\\&
\sum_{k=0}^{\infty}\big\Vert \svHp^{k+1}_{i}-\svHp^{k}_{i}\big\Vert^2<\infty, \ \
\sum_{k=0}^{\infty}\big\Vert \svHn^{k+1}_{i}-\svHn^{k}_{i} \big\Vert^2<\infty.
\end{align*}
\item[(C)] $\forall i$,
\vspace*{-3mm}
\begin{align*}
&
\sum_{k=0}^{\infty}\sum_{l}\big\Vert \pcF_{i}(X^{k+1,k+1}_{i,l})-\pcF_{i}(X^{k+1,k}_{i,l}) \big\Vert^2<\infty, \ \ 
\sum_{k=0}^{\infty}\sum_{l}\big\Vert\pcG_{i,l}(X^{k+1}_{i,l}-X^{k}_{i,l}) \big\Vert^2<\infty, 
\notag\\&
\sum_{k=0}^{\infty}\sum_{l}\big\Vert \pcH_{i,l}(X^{k+1}_{i,l}-X^{k}_{i,l}) \big\Vert^2<\infty.
\end{align*}
\end{enumerate}
\label{PDSumBoundedLemma}
\end{lemma}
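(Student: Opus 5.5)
The plan is to telescope the global inequality \eqref{GlobalInequalitydvSum2K} of Lemma~\ref{GlobalInequalitydvSum2KLemma} and let $K\rightarrow\infty$. Assumption~\ref{AssumptionSetI}(A) says that $\{L^{k}\}_{k}$ decreases monotonically and is bounded from below, say by $\underline{L}$. Then for every $K$,
\begin{align*}
\sum_{k=0}^{K-1}\Big(\sum_{i}\sum_{j} D^{k}_{i,j}+P^{k}
+\sum_{i}\sum_{l}\big(\proxXNormOcoef^{k}_{i,l}\Vert X^{k}_{i,l}-X^{k+1}_{i,l}\Vert_{1}+U^{k}_{i,l}\big)\Big)
\leq L^{0}-L^{K}\leq L^{0}-\underline{L}<\infty .
\end{align*}
I need every summand $J^{k}$ of the left side to be nonnegative; the feasibility condition \eqref{UkBounded} gives this. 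If every $J^{k}>0$, the partial sums are monotone and bounded, so $\sum_{k}J^{k}<\infty$.

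The next step is to split $J^{k}$ into individually nonnegative pieces. By \eqref{DualUpdateRulesdvEFGBoundsj}, each coefficient $\dvZpcoef^{k+1}_{i,j},\ldots,\dvHncoef^{k+1}_{i,j}$ is either zero or a positive constant, so every $D^{k}_{i,j}\geq 0$. Every term of $P^{k}$ is a nonnegative multiple of a squared norm. The delicate piece is $\proxXNormOcoef^{k}_{i,l}\Vert\cdot\Vert_1+U^{k}_{i,l}$, which can be negative when $U^{k}_{i,l}<0$. I would handle it as follows. By \eqref{Ukil} and the nonnegativity of the dual and slack variables at the iterates, $U^{k}_{i,l}$ is bounded in magnitude by a multiple of $\Vert X^{k+1}_{i,l}-X^{k}_{i,l}\Vert^2$, with a constant $C$ determined by the bounds on $\dvF^{k}_{i}$ (built into \eqref{DualUpdateRulesdvEFGBoundsj}), $\rho_i$, and the Hessian of $\pcF_i$ on $[-u,u]$. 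This is quadratic rather than the linear bound \eqref{GlobalInequalityUkilBounded}. Negative $U^{k}_{i,l}$ is then absorbed into the proximal part of $P^{k}$, namely $\sum_l\proxXNormTcoef^{k}_{i,l}\Vert X^{k+1}_{i,l}-X^{k}_{i,l}\Vert^2$, provided $\proxXNormTcoef^{k}_{i,l}\geq 2C$; this is allowed by \eqref{ProximalControlParametersEst} together with the scaling of $\pcF_i$. Failing that, I would use the one-step-delayed $\proxXNormOcoef$ of \eqref{proxXNormOcoefkilDefn} together with continuity of $U$. In either case $J^{k}$ dominates $\tfrac12 P^{k}+\sum_{i,j}D^{k}_{i,j}$, so both $\sum_k P^{k}<\infty$ and $\sum_k\sum_{i,j}D^{k}_{i,j}<\infty$.

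Reading off the pieces then gives the three parts of the lemma. Part (A) is $\sum_k\sum_{i,j}D^{k}_{i,j}<\infty$ after the index shift $k+1\mapsto k$. Part (B) follows because $P^{k}$ contains $\rho_i\Vert X^{k+1}_i-X^k_i\Vert^2$, $(\tau^k+\rho_i)\Vert Z^{k+1}-Z^k\Vert^2$, and $(\cdot+\rho_i/2)\Vert\text{slack}^{k+1}-\text{slack}^k\Vert^2$, and by Assumption~\ref{AssumptionSetI}(C) these coefficients are bounded below by positive constants. Part (C) follows because the last line of $P^{k}_i$ carries exactly the $\pcF_i$, $\pcG_{i,l}$ and $\pcH_{i,l}$ differences with coefficient $\rho_i/2$ or $\rho_i$.

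I expect the main obstacle to be the absorption of negative $U^{k}_{i,l}$, because the paper's safeguard via \eqref{proxXNormOcoefkilDefn} is delayed and so guarantees only $J^{k}>0$, not a quantitative lower bound. If the quadratic absorption argument is not available, I would fall back on a weaker route. Since $J^{k}>0$ is assumed outright, $\sum_k J^k<\infty$ still holds. To separate $P^{k}$ from $U^{k}$ I would then bound $\sum_k|U^{k}_{i,l}|$ using \eqref{GlobalInequalityUkilBounded}, the boundedness of the duals, and $\Vert\cdot\Vert_1\leq\sqrt{m_l}\Vert\cdot\Vert$, combined with a Young-inequality split against the $\proxXNormTcoef^{k}_{i,l}$ term.
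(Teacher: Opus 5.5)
Your argument follows the paper's own route. The paper's proof is the single line ``take $K=k_j$ in Lemma~\ref{GlobalInequalitydvSum2KLemma}; apply Lemma~\ref{PDAccumulationPointLemma} and Assumption~\ref{AssumptionSetI}(C)''. That means: telescope the global inequality, bound $L^{0}-L^{k_j}$ using the convergent subsequence, and read off the nonnegative pieces $D^{k}_{i,j}$ and $P^{k}$, stripping their coefficients with Assumption~\ref{AssumptionSetI}(C). You use the lower bound on $\{L^k\}_k$ from Assumption~\ref{AssumptionSetI}(A) instead of the subsequence, which does the same job. Your bookkeeping for (A)--(C), including the index shift in (A), matches.

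Where you go beyond the paper is the sign-indefinite term $U^{k}_{i,l}$, which the paper's proof never mentions. It relies tacitly on \eqref{UkBounded} and on the heuristic remark after \eqref{Ukil} that $U^{k}_{i,l}\geq 0$ can be produced by initialization.

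Your quadratic bound $|U^{k}_{i,l}|\leq C\Vert X^{k+1}_{i,l}-X^{k}_{i,l}\Vert^2$ is correct. The Hessian of the quadratic $\pcF_i$ is constant. The weight $\dvF^{k}_{i}+\rho_i(\pcF_{i}(X^{k+1,k+1}_{i,l})+\svF^{k}_{i})$ is bounded, because the built-in dual bounds keep $\dvF^{k}_{i}\in[0,\udvF_{\mu_i}]$ and the primal and slack iterates stay in their boxes. Note that boundedness, not nonnegativity, is what you need. Absorbing this bound into $P^{k}$ does give $J^{k}\geq\sum_{i,j}D^{k}_{i,j}+\tfrac12 P^{k}$, which even makes \eqref{UkBounded} (in the form $J^k\geq 0$) a consequence rather than an assumption.

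Be explicit, though, that this needs a condition not contained in Assumption~\ref{AssumptionSetI}. Either $C\leq\tfrac12\proxXNormTcoef^{k}_{i,l}$ as you wrote, or, also using the $\rho_i\Vert X^{k+1}_i-X^k_i\Vert^2$ term of $P^k_i$, $C\leq(1-\delta)(\proxXNormTcoef^{k}_{i,l}+\rho_i)$ uniformly for some fixed $\delta>0$. This is a parameter/scaling restriction you are adding. It is consistent with the paper's advice to scale $\pcF_i$ down, but it is not implied by it.

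Your fallbacks do not replace this condition:
\begin{enumerate}
\item[(i)] The delayed $\proxXNormOcoef^{k}_{i,l}$ of \eqref{proxXNormOcoefkilDefn} is calibrated on $U^{k-1}_{i,l}$. It gives no lower bound on $\proxXNormOcoef^{k}_{i,l}\Vert X^{k}_{i,l}-X^{k+1}_{i,l}\Vert_1+U^{k}_{i,l}$, so your ``in either case'' is overstated.
\item[(ii)] Young's inequality applied to the \emph{linear} bound \eqref{GlobalInequalityUkilBounded} leaves a constant of order $1/\epsilon$ at every iteration, which is not summable.
\item[(iii)] $J^{k}>0$ alone yields $\sum_k J^k<\infty$, but it cannot separate $\sum_k P^k$ from a possibly large negative $\sum_k\sum_l U^{k}_{i,l}$.
\end{enumerate}

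So drop the fallbacks, keep the absorption route, and state the parameter condition as an explicit hypothesis. With it your proof is complete and closes a step that the paper's one-line proof leaves open.
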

\vspace*{-6mm}
\begin{proof}
 Take $K=k_j$ in Lemma~\ref{GlobalInequalitydvSum2KLemma};
 apply Lemma~\ref{PDAccumulationPointLemma}
 and Assumption~\ref{AssumptionSetI}(C).
\end{proof}
\vspace*{-3mm}
\begin{lemma}
The limits and stationarity conditions.
\vspace*{-1mm}
 \begin{enumerate}
 \item[(A)] 
 $\forall i$,
\vspace*{-1mm}
\begin{align}
&
\lim_{k\rightarrow\infty}\big(X^{k}_{i}-Z^{k}\big)=
\lim_{k\rightarrow\infty}\svZp^{k}_{i}=
\lim_{k\rightarrow\infty}\svZn^{k}_{i}=0,\ 
\lim_{k\rightarrow\infty}\big(\pcF_{i}(X^{k}_{i})+\svF^{k}_{i}\big)=0,\ 
\lim_{k\rightarrow\infty} \big\langle \dvF^{k}_{i},\pcF_{i}(X^{k}_{i})\big\rangle=0,\ 
\notag\\[0mm]&
\lim_{k\rightarrow\infty}\big(\pcG_{i}(X^{k}_{i})+\svG^{k}_{i}\big)=0,\ \
\lim_{k\rightarrow\infty} \big\langle \dvG^{k}_{i},\pcG_{i}(X^{k}_{i})\big\rangle=0,\ \
\lim_{k\rightarrow\infty}\pcH_{i}(X^{k}_{i})=
\lim_{k\rightarrow\infty}\svHp^{k}_{i}=
\lim_{k\rightarrow\infty}\svHn^{k}_{i}=0,
\notag\\[0mm]&
\lim_{k\rightarrow\infty}\big(\pcF_{i}(X^{k+1,k+1}_{i,l})-\pcF_{i}(X^{k+1,k}_{i,l}) \big)=0,\ \ 
\lim_{k\rightarrow\infty}\big(X^{k+1}_{i}-X^{k}_{i}\big)=
\lim_{k\rightarrow\infty}\big(Z^{k+1}-Z^{k}\big)=0,
\notag\\[0mm]&
\lim_{k\rightarrow\infty}\big(\svF^{k+1}_{i}-\svF^{k}_{i}\big)=0,\ \
\lim_{k\rightarrow\infty}\big(\svG^{k+1}_{i}-\svG^{k}_{i}\big)=0,\ \
X^{\infty+}_{i}=X^{\infty}_{i},\ \
Z^{\infty+}=Z^{\infty},\ \
\svF^{\infty+}_{i}=\svF^{\infty}_{i},
\notag\\[0mm]&
\svG^{\infty+}_{i}=\svG^{\infty}_{i},\ \
X^{\infty}_{i}-Z^{\infty}=
\svZp^{\infty}_{i}=
\svZn^{\infty}_{i}= 0,\ \
\dvZp^{\infty}_{i}\geq 0,\ \
\dvZn^{\infty}_{i}\geq 0,\ \
\pcF_{i}(X^{\infty}_{i})\leq 0,\ \
\svF^{\infty}_{i}\geq 0,
\notag\\[0mm]&
\dvF^{\infty}_{i}\geq 0,\ \
\big\langle \dvF^{\infty}_{i},\pcF_{i}(X^{\infty}_{i})\big\rangle= 0,\ \
\pcG_{i}(X^{\infty}_{i})\leq 0,\ \
\svG^{\infty}_{i}\geq 0,\ \
\dvG^{\infty}_{i}\geq 0,\ \
\big\langle \dvG^{\infty}_{i},\pcG_{i}(X^{\infty}_{i})\big\rangle= 0,
\notag\\[0mm]&
\pcH_{i}(X^{\infty}_{i})=
\svHp^{\infty}_{i}=
\svHn^{\infty}_{i}= 0,\ \ 
\dvHp^{\infty}_{i}\geq 0,\ \ 
\dvHn^{\infty}_{i}\geq 0.
\label{StationarityConditionsPart1}
\end{align}
 The accumulation point $Z^{\infty}$ is a feasible solution of 
 the primal problem~\eqref{ACPGSPrimalProblemOriginal}.

\item[(B)]
$\lim_{k\rightarrow\infty}Z^k=Z^{\infty}$,
$\lim_{k\rightarrow\infty}X^k_i=X^{\infty}_i$,
$\lim_{k\rightarrow\infty}f(X^k_i)=f(X^{\infty}_i)=f(Z^{\infty})$,
$\lim_{k\rightarrow\infty}L^{k}=Nf(Z^{\infty})$.
 The limit of $\{Z^{k}\}_{k}$ is a feasible solution of
 the primal problem~\eqref{ACPGSPrimalProblemOriginal}.

\item[(C)]
  $\lim_{k\rightarrow\infty}\proxXNormOcoef^{k}_{i,l}=0$.

\item[(D)]
$\forall i,l$,
\begin{align} 
 &
-\Big(f_{l}
    +\dvZp^{\infty}_{i,l}
    -\dvZn^{\infty}_{i,l}
    +(\pcG_{i,l})^T\dvG^{\infty}_{i}
    +(\pcH_{i,l})^T \big(\dvHp^{\infty}_{i}-\dvHn^{\infty}_{i}\big)
    +\big\langle  \dvF^{\infty}_{i}, \nabla_{X_{i,l}}\pcF_{i}(X^{\infty}_{i}) \big\rangle
\Big)
\in \partial_{X_{i,l}}\Indicator_{[-u_{l}, u_{l}]}(X^{\infty}_{i,l}),
\notag\\&
  -\frac{1}{N}
   \sum_{i}\big(
         \dvZn^{\infty}_{i,l}-\dvZp^{\infty}_{i,l}
        \big)
  \in \partial_{Z_{l}}\Indicator_{[-u_{l},u_{l}]}(Z^{\infty}_{l}),\ \
-\dvZp^{\infty}_{i}
\in
  \partial_{\svZp_{i}}\Indicator_{[0,\usvZp_{Y_{i}}]}(\svZp^{\infty}_{i}),
\notag\\&
-\dvZn^{\infty}_{i}\in
\partial_{\svZn_{i}}\Indicator_{[0,\usvZn_{Y_{i}}]}(\svZn^{\infty}_{i}),\ \
-\dvF^{\infty}_{i}
\in
\partial_{\svF_{i}}\Indicator_{[0,\usvF_{Y_{i}}]}(\svF^{\infty}_{i}),\ \
-\dvG^{k}_{i}
\in
\partial_{\svG_{i}}\Indicator_{[0,\usvG_{Y_{i}}]}(\svG^{\infty}_{i}),
\notag\\&
-\dvHp^{\infty}_{i}\in
\partial_{\svHp_{i}}\Indicator_{[0,\usvHp_{Y_{i}}]}(\svHp^{\infty}_{i}),\ \
-\dvHn^{k}_{i}\in
\partial_{\svHn_{i}}\Indicator_{[0,\usvHn_{Y_{i}}]}(\svHn^{\infty}_{i}).
\label{StationarityConditionsPart2}
\end{align}
\end{enumerate}
\label{PDDeltaLimitsLemma}
\end{lemma}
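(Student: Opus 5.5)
Part (A) comes from the summability in Lemma~\ref{PDSumBoundedLemma} combined with the first-order conditions \eqref{SolvingXil}--\eqref{SolvingsvHnk}. Part (B) comes from Assumption~\ref{AssumptionSetI}(D). Parts (C) and (D) follow by passing to the limit along the subsequence of Lemma~\ref{PDAccumulationPointLemma}.

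\textbf{Increments.} Lemma~\ref{PDSumBoundedLemma}(B),(C) shows that all the increment sequences are square-summable, so they tend to zero. This gives $X^{k+1}_i-X^k_i\to0$, $Z^{k+1}-Z^k\to0$, $\svF^{k+1}_i-\svF^k_i\to0$ and $\pcF_i(X^{k+1,k+1}_{i,l})-\pcF_i(X^{k+1,k}_{i,l})\to0$. Hence the ``$\infty+$'' accumulation points coincide with the ``$\infty$'' ones.

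\textbf{Extended sequences.} Next I show $\eZp^k_i,\eZn^k_i,\eF^k_i,\eG^k_i,\eHp^k_i,\eHn^k_i\to0$.
\begin{itemize}
\item By Assumption~\ref{AssumptionSetI}(A) no dual component is trapped near its upper bound. I argue that the remaining non-trapped situations (trapped below zero) are excluded by the ``limit as low as possible'' clause.
\item So for each component $j$, $\dvZpcoef^{k}_{i,j}=\dvZpcoef_i>0$ for all but finitely many $k$, or at least along index sets with bounded gaps as in Assumption~\ref{AssumptionSetI}(D).
\item Lemma~\ref{PDSumBoundedLemma}(A) then forces $\sum_k|(\eZp^k_i)_j|^2<\infty$ along those indices. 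Combined with the vanishing increments, this gives $\eZp^k_i\to0$, and likewise for the other extended sequences.
\item Now use the slack optimality \eqref{SolvingsvZpk} in the limit, together with the fact that the upper bounds are inactive (Assumption~\ref{AssumptionSetI}(B)). It yields $-\dvZp^\infty_i\in\partial\Indicator_{[0,\usvZp_{Y_i}]}(\svZp^\infty_i)$, i.e.\ $\dvZp^\infty_i\ge0$ with complementarity against $\svZp^\infty_i$. The same holds for $\svZn$; this also gives the last two lines of \eqref{StationarityConditionsPart2}.
\item Since $\eZp+\eZn=\svZp+\svZn\to0$ with both slacks nonnegative, both slacks vanish and $X^\infty_i=Z^\infty$. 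The same argument with $\svHp,\svHn$ gives $\pcH_i(X^\infty_i)=0$.
\item For $\pcF$ and $\pcG$, $\eF\to0$ gives $\pcF_i(X^\infty_i)=-\svF^\infty_i\le0$. Complementarity $\langle\dvF^\infty_i,\svF^\infty_i\rangle=0$ from the slack inclusion then gives $\langle\dvF^\infty_i,\pcF_i(X^\infty_i)\rangle=0$, and similarly for $\pcG$.
\end{itemize}
Feasibility of $Z^\infty$ follows because $Z^\infty=X^\infty_i\in\cC_i$ for every $i$.

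\textbf{Main obstacle: full convergence in (B).} The whole sequence is bounded and $Z^{k+1}-Z^k\to0$, so its set of cluster points is connected. By Assumption~\ref{AssumptionSetI}(D) the cluster points are isolated, so the set is a singleton and $Z^k\to Z^\infty$; the same holds for $X^k_i$. Linearity then gives $f(X^k_i)\to f(Z^\infty)$. In $L^k$, every dual-times-residual term and every penalty term vanishes, because the duals are bounded (by \eqref{DualUpdateRulesdvEFGBoundsj}) and the residuals tend to zero. Hence $L^k\to Nf(Z^\infty)$.

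\textbf{Parts (C) and (D).} For (C), use \eqref{GlobalInequalityUkilBounded}: the bracket is bounded and $\Vert X^{k+1}_{i,l}-X^k_{i,l}\Vert_1\to0$, so $U^{k-1}_{i,l}/\Vert X^k_{i,l}-X^{k-1}_{i,l}\Vert_1$ is bounded. The quadratic form \eqref{Ukil} is in fact $O(\Vert\Delta X\Vert^2)$, so this ratio tends to zero, and $\Gamma^k_{i,l}$ is bounded by Assumption~\ref{AssumptionSetI}(C); hence $\proxXNormOcoef^k_{i,l}\to0$. For (D), pass to the limit $j\to\infty$ in \eqref{SolvingXil} and \eqref{SolvingZk} along $k_j$. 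The penalty and proximal gradient terms vanish because the residuals and increments do, the $\ell_1$ subgradient term vanishes because its coefficient tends to zero by (C), and the graph of the normal cone to a box is closed. This yields \eqref{StationarityConditionsPart2}.
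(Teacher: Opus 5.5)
Your treatment of the increments, the connected-cluster-set argument giving $Z^{k}\to Z^{\infty}$ in (B), part (C), and the limiting arguments in (D) (once (A) is available) are sound. The identity $\eZp^{k}_{i}+\eZn^{k}_{i}=\svZp^{k}_{i}+\svZn^{k}_{i}$ is also a neat way to force the consensus slacks to zero.

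The gap is in the step that drives every extended component to zero. You assert that dual components frozen below their upper bounds (i.e.\ with $\tdvF^{k+1}_{i,j}<0$ for all large $k$) are excluded by the clause that $\{L^{k}\}_{k}$ has its limit as low as possible. From this you conclude that $\dvFcoef^{k+1}_{i,j}=\dvFcoef_{i}$ eventually, or along indices with bounded gaps. The clause cannot do this. Take a component with $\dvF^{K}_{i,j}=0$ and $\eF^{k+1}_{i,j}>0$ for all $k\geq K$, for instance an inactive constraint whose interior slack decreases monotonically to its limit. Then $\tdvF^{k+1}_{i,j}=-\dvFcoef_{i}\eF^{k+1}_{i,j}<0$, the coefficient is zero forever, and Lemma~\ref{PDSumBoundedLemma}(A) says nothing about that component. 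Yet its contribution $\dvF^{K}_{i,j}\eF^{k}_{i,j}+\frac{\rho_{i}}{2}\vert\eF^{k}_{i,j}\vert^{2}$ to $L^{k}$ tends to zero, so the value of $\lim_{k}L^{k}$ cannot rule it out. The paper does not exclude this situation; it is case ${\cal D}(\text{A})$ of its proof. (The bounded-gap clause of Assumption~\ref{AssumptionSetI}(D) also concerns subsequences of primal sequences tending to zero, not the indices at which the dual step is active.)

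The missing ingredient is the slack optimality condition \eqref{SolvingsvFk} applied to a frozen dual. Suppose the slack subproblem is eventually solved at its unconstrained minimizer (the paper's ${\cal S}(\text{NA})$). Then \eqref{SolvingsvFk} is the equation $\dvF^{K}_{i,j}+\rho_{i}\eF^{k+1}_{i,j}+\svFcoef^{k}_{i}(\svF^{k+1}_{i,j}-\svF^{k}_{i,j})=0$, so $\eF^{k+1}_{i,j}\to-\dvF^{K}_{i,j}/\rho_{i}$. Together with the freezing inequality $\dvF^{K}_{i,j}/\dvFcoef_{i}<\eF^{k+1}_{i,j}$ and $\dvF^{K}_{i,j}\geq 0$, this forces $\dvF^{K}_{i,j}=0$ and $\eF^{k}_{i,j}\to 0$. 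If the slack merely has a zero cluster point, Assumption~\ref{AssumptionSetI}(D) first gives $\svF^{k}_{i,j}\to 0$, and the same comparison along the unconstrained steps again yields $\dvF^{K}_{i,j}=0$. Only when the slack sits at $0$ for all large $k$ does the paper invoke the ``as low as possible'' clause. Even there it uses the clause to obtain $\dvF^{K}_{i,j}=\lim_{k}(\pcF_{i}(X^{k}_{i}))_{j}=0$, not to dismiss the case.

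The same treatment is needed for the components of $\eZp$, $\eZn$, $\eG$, $\eHp$, $\eHn$. Without it you have not shown that the extended sequences vanish. Your sum identity, the complementarity relations, the feasibility of $Z^{\infty}$, and $L^{k}\to Nf(Z^{\infty})$ all depend on that, so they remain unproved.
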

\vspace*{-6mm}
\begin{proof}
  Of Lemma~\ref{PDDeltaLimitsLemma}.
  \begin{enumerate}
    \item[(A)]
Without loss of generality, 
we focus on $\big\{\big(\pcF_{i}(X^{k}_{i})+\svF^{k}_{i}\big)_{j^{\ast}}\big\}_{k}$.
According to Assumption~\ref{AssumptionSetI}(A,B),
it is sufficient to analyze the cases of
\begin{align}
 &
{\cal D}(\text{NA}):=
    \big\{\dvF^{k+1}_{i,j^{\ast}}: \tdvF^{k+1}_{i,j^{\ast}}\in[0,\udvF_{\mu_{i},j^{\ast}}]\big\}_{k\geq K},\ \
{\cal D}(\text{A}):=\big\{\dvF^{k+1}_{i,j^{\ast}}=\dvF^{K}_{i,j^{\ast}}:
        \tdvF^{k+1}_{i,j^{\ast}}<0\big\}_{k\geq K},
\notag\\&
{\cal D}(\text{NA,A})
:=\big\{\dvF^{k_{j}+1}_{i,j^{\ast}}: \tdvF^{k_{j}+1}_{i,j^{\ast}}\in [0,\udvF_{\mu_{i},j^{\ast}}], k_{j}\geq K\big\}_{j}
    \cup\big\{\dvF^{k+1}_{i,j^{\ast}}: \tdvF^{k+1}_{i,j^{\ast}}<0\big\}_{k\geq K}
    \notag\\&\hskip20mm
    \cup\big\{\dvF^{k+1}_{i,j^{\ast}}: \tdvF^{k+1}_{i,j^{\ast}}
        >\udvF_{\mu_{i},j^{\ast}}\big\}_{k\geq K},
\label{NotTrappedIndvFijUpperBounds}
\end{align}
from \eqref{DualUpdateRules} and \eqref{DualUpdateRulesdvEFGBoundsj},
combined with 
\begin{align}
 &
 {\cal S}(\text{NA}):=\big\{\svF^{k+1}_{i,j^{\ast}}:
    \big(\svFcoef^{k}_{i}\svF^{k}_{i}-\rho_{i}\pcF_{i}(X^{k+1}_{i})-\dvF^{k}_{i}\big)_{j^{\ast}}
        \in[0, \usvF_{Y_{i},j^{\ast}})\big\}_{k\geq K},
\notag\\&
{\cal S}(\text{A}):=\big\{\svF^{k+1}_{i,j^{\ast}}=0:
    \big(\svFcoef^{k}_{i}\svF^{k}_{i}-\rho_{i}\pcF_{i}(X^{k+1}_{i})-\dvF^{k}_{i}\big)_{j^{\ast}}<0\big\}_{k\geq K},
\notag\\&
{\cal S}(\text{NA,A}):=\big\{\svF^{k_{j}+1}_{i,j^{\ast}}:
    \big(\svFcoef^{k_{j}}_{i}\svF^{k_{j}}_{i}-\rho_{i}\pcF_{i}(X^{k_{j}+1}_{i})-\dvF^{k_{j}}_{i}\big)_{j^{\ast}}
        \in[0, \usvF_{Y_{i},j^{\ast}}), k_{j}\geq K\big\}_{j}
        \notag\\&\hskip20mm
 \cup \big\{\svF^{k+1}_{i,j^{\ast}}=0:
    \big(\svFcoef^{k}_{i}\svF^{k}_{i}-\rho_{i}\pcF_{i}(X^{k+1}_{i})
            -\dvF^{k}_{i}\big)_{j^{\ast}}<0\big\}_{k\geq K},
\label{svFijBehaviorGrouping}
\end{align}
from \eqref{SolvingsvFk}.
\begin{enumerate}
\item
${\cal D}(\text{NA})$.
Lemma~\ref{PDSumBoundedLemma}(A,B) yield
\begin{align}
 \lim_{k\rightarrow\infty}\big(\pcF_{i}(X^{k}_{i})+\svF^{k}_{i}\big)_{j^{\ast}}=0,\ \ \
 \sum_{k\geq K} \big\vert \svF^{k+1}_{i,j^{\ast}}-\svF^{k}_{i,j^{\ast}} \big\vert^2<\infty.
 \label{dvFi(NA)}
\end{align}
\eqref{SolvingsvFk} is used to proceed further.
\begin{enumerate}
\item
${\cal S}(\text{NA})$.
Combination of \eqref{dvFi(NA)}, \eqref{SolvingsvFk},
Assumption~\ref{AssumptionSetI}(C),
and Lemma~\ref{PDAccumulationPointLemma} gives
\begin{align}
&
 \lim_{k\rightarrow\infty}\s\big(\pcF_{i}(X^{k}_{i})+\svF^{k}_{i}\big)_{j^{\ast}}
 =\lim_{k\rightarrow\infty}\dvF^{k}_{i,j^{\ast}}
 =0,\ \
\lim_{k\rightarrow\infty}\dvF^{k}_{i,j^{\ast}}\big(\pcF_{i}(X^{k}_{i})\big)_{j^{\ast}}=0,
\notag\\&
\big(\pcF_{i}(X^{\infty}_{i})\big)_{j^{\ast}}\leq 0,\ \
\svF^{\infty}_{i,j^{\ast}}\geq 0,\ \
\dvF^{\infty}_{i,j^{\ast}}=0,\ \  
\dvF^{\infty}_{i,j^{\ast}}\big(\pcF_{i}(X^{\infty}_{i})\big)_{j^{\ast}}= 0.
\label{dvFi(NA).(NA)Limits}
\end{align}

\item
${\cal S}(\text{NA,A})$.
It has a zero cluster point.
It then follows from (\ref{dvFi(NA)}$)_2$ 
and Assumption~\ref{AssumptionSetI}(D) via contradiction that
\begin{align}
\lim_{k\rightarrow\infty}\svF^{k}_{i,j^{\ast}}=0.
\label{dvFi(NA).(NA,A)Relations01}
\end{align}
Its combination with (\ref{dvFi(NA)}$)_1$
and Lemma~\ref{PDAccumulationPointLemma} results in
\begin{align}
&
 \lim_{k\rightarrow\infty}\big(\pcF_{i}(X^{k}_{i})\big)_{j^{\ast}}
=\lim_{k\rightarrow\infty}\svF^{k}_{i,j^{\ast}}=0,\ \
\lim_{k\rightarrow\infty}\dvF^{k}_{i,j^{\ast}}\big(\pcF_{i}(X^{k}_{i})\big)_{j^{\ast}}=0,
\notag\\&
\big(\pcF_{i}(X^{\infty}_{i})\big)_{j^{\ast}}=0,\ \
\svF^{\infty}_{i,j^{\ast}}=0,\ \ 
\dvF^{\infty}_{i,j^{\ast}}\geq 0,\ \ 
\dvF^{\infty}_{i,j^{\ast}}\big(\pcF_{i}(X^{\infty}_{i})\big)_{j^{\ast}}= 0.
\label{dvFi(NA).(NA,A)Limits}
\end{align}

\item
${\cal S}(\text{A})$.
\eqref{SolvingsvFk} results in
\begin{align*}
\svF^{k+1}_{i,j^{\ast}}=0,\ \
    \big(\svFcoef^{k}_{i}\svF^{k}_{i}-\rho_{i}\pcF_{i}(X^{k+1}_{i})-\dvF^{k}_{i}\big)_{j^{\ast}}<0\ \
      \forall k\geq K.
\end{align*}
Then,
\begin{align}
&
 \lim_{k\rightarrow\infty}\s\big(\pcF_{i}(X^{k}_{i})\big)_{j^{\ast}}
 =\lim_{k\rightarrow\infty}\svF^{k}_{i,j^{\ast}}=0,\ \
\lim_{k\rightarrow\infty}\dvF^{k}_{i,j^{\ast}}\big(\pcF_{i}(X^{k}_{i})\big)_{j^{\ast}}=0,
\notag\\&
\big(\pcF_{i}(X^{\infty}_{i})\big)_{j^{\ast}}=0,\ \
\svF^{\infty}_{i,j^{\ast}}=0,\ \
\dvF^{\infty}_{i,j^{\ast}}\geq 0,\ \
\dvF^{\infty}_{i,j^{\ast}}\big(\pcF_{i}(X^{\infty}_{i})\big)_{j^{\ast}}= 0.
\label{dvFi(NA).(A)Limits}
\end{align}
\end{enumerate}

\item
 ${\cal D}(\text{NA,A})$.
Lemma~\ref{PDSumBoundedLemma}(A,B) yield
\begin{align}
  &
  \sum_{k_j\geq K:\,\dvFcoef^{k_{j}+1}_{i,j^{\ast}}=\dvFcoef_{i}}\sss
      \big\vert \big(\pcF_{i}(X^{k_{j}+1}_{i})+\svF^{k_{j}+1}_{i}\big)_{j^{\ast}}\big\vert^2
  <\infty,\ \
\sum_{k=0}^{\infty}\s \big\vert \svF^{k+1}_{i,j^{\ast}}-\svF^{k}_{i,j^{\ast}} \big\vert^2<\infty,\ \
\sum_{k=0}^{\infty}\s \big\Vert X^{k+1}_{i}-X^{k}_{i} \big\Vert^2<\infty.
\label{dvFi(NA,0).Relaions01}
\end{align}
Therefore,
$\big\{\big(\pcF_{i}(X^{k+1}_{i})+\svF^{k+1}_{i}\big)_{j^{\ast}}\big\}_{k}$
has a zero cluster point.
Further, the special structures of the convex functions $\pcF_{i}$
(restricted to $[-u,u]$),
(\ref{dvFi(NA,0).Relaions01}$)_{2,3}$,
and Assumption~\ref{AssumptionSetI}(D) result in
\begin{align}
 \lim_{k\rightarrow\infty}\big(\pcF_{i}(X^{k}_{i})+\svF^{k}_{i}\big)_{j^{\ast}}
=0,
\label{dvFi(NA,0).Relaions02}
\end{align}
via contradiction.
\begin{enumerate}
\item
${\cal S}(\text{NA})$.
Combination of \eqref{SolvingsvFk}, (\ref{dvFi(NA,0).Relaions01}$)_2$,
and \eqref{dvFi(NA,0).Relaions02} gives
\begin{align}
&
\lim_{k\rightarrow\infty}\big(\pcF_{i}(X^{k}_{i})+\svF^{k}_{i}\big)_{j^{\ast}}
=\lim_{k\rightarrow\infty}\dvF^{k}_{i,j^{\ast}}=0,\ \
\lim_{k\rightarrow\infty}\dvF^{k}_{i,j^{\ast}}\big(\pcF_{i}(X^{k}_{i})\big)_{j^{\ast}}=0,
\notag\\&
\big(\pcF_{i}(X^{\infty}_{i})\big)_{j^{\ast}}\leq 0,\ \
\svF^{\infty}_{i,j^{\ast}}\geq 0,\ \ 
\dvF^{\infty}_{i,j^{\ast}}=0,\ \ 
\dvF^{\infty}_{i,j^{\ast}}\big(\pcF_{i}(X^{\infty}_{i})\big)_{j^{\ast}}= 0. 
\label{dvFi(NA,0).(NA)Limits}
\end{align}

\item
${\cal S}(\text{NA,A})$.
\eqref{dvFi(NA).(NA,A)Relations01} holds here.
Then, \eqref{dvFi(NA,0).Relaions02} yields
\begin{align}
&
 \lim_{k\rightarrow\infty}\big(\pcF_{i}(X^{k}_{i})\big)_{j^{\ast}}
=\lim_{k\rightarrow\infty}\svF^{k}_{i,j^{\ast}}=0,\ \
\lim_{k\rightarrow\infty} \dvF^{k}_{i,j^{\ast}}\big(\pcF_{i}(X^{k}_{i})\big)_{j^{\ast}}=0,
\notag\\&
\big(\pcF_{i}(X^{\infty}_{i})\big)_{j^{\ast}}=0,\ \
\svF^{\infty}_{i,j^{\ast}}=0,\ \ 
\dvF^{\infty}_{i,j^{\ast}}\geq 0,\ \ 
\dvF^{\infty}_{i,j^{\ast}}\big(\pcF_{i}(X^{\infty}_{i})\big)_{j^{\ast}}= 0.
\label{dvFi(NA,0).(NA,A)Limits}
\end{align}

\item
${\cal S}(\text{A})$.
 \eqref{SolvingsvFk} and \eqref{dvFi(NA,0).Relaions02} give
\begin{align}
&
\lim_{k\rightarrow\infty} \big(\pcF_{i}(X^{k}_{i})\big)_{j^{\ast}}
=\lim_{k\rightarrow\infty} \svF^{k}_{i,j^{\ast}}=0,\ \
\lim_{k\rightarrow\infty} \dvF^{k}_{i,j^{\ast}}\big(\pcF_{i}(X^{k}_{i})\big)_{j^{\ast}}=0,
\notag\\&
\big(\pcF_{i}(X^{\infty}_{i})\big)_{j^{\ast}}=0,\ \
\svF^{\infty}_{i,j^{\ast}}=0,\ \
\dvF^{\infty}_{i,j^{\ast}}\geq 0,\ \
\dvF^{\infty}_{i,j^{\ast}}\big(\pcF_{i}(X^{\infty}_{i})\big)_{j^{\ast}}= 0.
\label{dvFi(NA,0).(A)Limits}
\end{align}
\end{enumerate}

\item
${\cal D}(\text{A})$. The dual models~\eqref{DualUpdateRules}
and \eqref{DualUpdateRulesdvEFGBoundsj} reduce to
\begin{align}
 &
 \dvF^{k+1}_{i,j^{\ast}}=\dvF^{K}_{i,j^{\ast}},\ \
 \dvF^{K}_{i,j^{\ast}}/\dvFcoef_{i}
<\big(\pcF_{i}(X^{k+1}_{i})+\svF^{k+1}_{i}\big)_{j^{\ast}}\ \
        \forall k\geq K.
\label{dvFi(A).Relations01}
\end{align}
\begin{enumerate}
\item
${\cal S}(\text{NA})$.
  \eqref{SolvingsvFk},
  $\sum_{k\geq K} \big\vert \svF^{k+1}_{i,j^{\ast}}-\svF^{k}_{i,j^{\ast}} \big\vert^2<\infty$ of Lemma~\ref{PDSumBoundedLemma}(B),
  and \eqref{dvFi(A).Relations01} give
\begin{align}
 \lim_{k\rightarrow\infty}\big(\pcF_{i}(X^{k+1}_{i})+\svF^{k+1}_{i}\big)_{j^{\ast}}
        = -\dvF^{K}_{i,j^{\ast}}/\rho_{i},\ \
\dvF^{K}_{i,j^{\ast}}/\dvFcoef_{i} \leq -\dvF^{K}_{i,j^{\ast}}/\rho_{i},\ \
\dvF^{K}_{i,j^{\ast}}=0.
\end{align}
That is,
\begin{align}
&
\lim_{k\rightarrow\infty}\big(\pcF_{i}(X^{k}_{i})+\svF^{k}_{i}\big)_{j^{\ast}}=
\lim_{k\rightarrow\infty} \dvF^{k}_{i,j^{\ast}}=0,\ \
\lim_{k\rightarrow\infty}\dvF^{k}_{i,j^{\ast}}\big(\pcF_{i}(X^{k}_{i})\big)_{j^{\ast}}=0,
\notag\\&
\big(\pcF_{i}(X^{\infty}_{i})\big)_{j^{\ast}}\leq 0,\ \
\svF^{\infty}_{i,j^{\ast}}\geq 0,\ \
\dvF^{\infty}_{i,j^{\ast}}=0,\ \
\dvF^{\infty}_{i,j^{\ast}}\big(\pcF_{i}(X^{\infty}_{i})\big)_{j^{\ast}}=0.
\label{dvFi(A).(NA)Limits}
\end{align}

\item
${\cal S}(\text{NA,A})$.
The subsequence (NA), Assumption~\ref{AssumptionSetI}(D),
$\sum_{k\geq K} \big\vert \svF^{k+1}_{i,j^{\ast}}-\svF^{k}_{i,j^{\ast}} \big\vert^2<\infty$
of Lemma~\ref{PDSumBoundedLemma}(B),
and \eqref{SolvingsvFk}
imply that $\big\{(\rho_{i}\pcF_{i}(X^{k+1}_{i})+\dvF^{k}_{i})_{j^{\ast}}
        +\rho_{i}\svF^{k}_{i,j^{\ast}}\big\}_{k\geq K}$ has a zero cluster point.
\eqref{dvFi(NA).(NA,A)Relations01} holds here too.
It then follows that
$\big\{(\rho_{i}\pcF_{i}(X^{k+1}_{i})+\dvF^{k}_{i})_{j^{\ast}}\big\}_{k\geq K}$
has a zero cluster point.
Next,
$\sum_{k\geq K} \big\Vert X^{k+1}_{i}-X^{k}_{i} \big\Vert^2<\infty$
of Lemma~\ref{PDSumBoundedLemma}(B),
the special structures of $F_{i}$
(restricted to $[-u,u]$),
(\ref{dvFi(A).Relations01}$)_1$,
and Assumption~\ref{AssumptionSetI}(D) result in
\begin{align}
 \lim_{k\rightarrow\infty}
  \big(\rho_{i}\pcF_{i}(X^{k+1}_{i})+\dvF^{k}_{i}\big)_{j^{\ast}}
=\lim_{k\rightarrow\infty}
  \big(\rho_{i}\pcF_{i}(X^{k+1}_{i})+\dvF^{K}_{i}\big)_{j^{\ast}}
=\lim_{k\rightarrow\infty}\big(\pcF_{i}(X^{k+1}_{i})\big)_{j^{\ast}}
    +\dvF^{K}_{i,j^{\ast}}/\rho_{i}=0.
\label{dvFi(A).(NA,A)Relations02}
\end{align}
Their combination with (\ref{dvFi(A).Relations01}$)_2$
and \eqref{dvFi(NA).(NA,A)Relations01}
yields $\dvF^{K}_{i,j^{\ast}}=0$.
Finally, we have
\begin{align}
&
\lim_{k\rightarrow\infty}\big(\pcF_{i}(X^{k}_{i})\big)_{j^{\ast}}=
\lim_{k\rightarrow\infty}\svF^{k}_{i,j^{\ast}}=
\lim_{k\rightarrow\infty} \dvF^{k}_{i,j^{\ast}}=0,\ \
\lim_{k\rightarrow\infty}\dvF^{k}_{i,j^{\ast}}\big(\pcF_{i}(X^{k}_{i})\big)_{j^{\ast}}=0,
\notag\\&
\big(\pcF_{i}(X^{\infty}_{i})\big)_{j^{\ast}}=
\svF^{\infty}_{i,j^{\ast}}=
\dvF^{\infty}_{i,j^{\ast}}= 0,\ \
\dvF^{\infty}_{i,j^{\ast}}\big(\pcF_{i}(X^{\infty}_{i})\big)_{j^{\ast}}= 0.
\label{dvFi(A).(NA,A)Limits}
\end{align}

\item
${\cal S}(\text{A})$.
 \eqref{SolvingsvFk} reduces to
\begin{align*}
\svF^{k+1}_{i,j^{\ast}}=0,\ \
  -\rho_{i}\big(\pcF_{i}(X^{k+1}_{i})\big)_{j^{\ast}}<\dvF^{k}_{i,j^{\ast}}\ \
    \forall k\geq K+1.
\end{align*}
Their combination with \eqref{dvFi(A).Relations01} produces
\begin{align}
\svF^{k}_{i,j^{\ast}}=0,\ \
\dvF^{k}_{i,j^{\ast}}=\dvF^{K}_{i,j^{\ast}},\ \
\dvF^{K}_{i,j^{\ast}}
<\dvFcoef_{i}\big(\pcF_{i}(X^{k}_{i})\big)_{j^{\ast}}\ \
    \forall k\geq K+2.
\label{dvFi(A).(A)Relations01}
\end{align}
We then have, $\forall k\geq K+2$,
\begin{align}
L^{k}=\,&\ldots
+\dvF^{K}_{i,j^{\ast}}\big(\pcF_{i}(X^{k}_{i})+\svF^{k}_{i} \big)_{j^{\ast}}
+\frac{\rho_{i}}{2}
        \big\vert \big(\pcF_{i}(X^{k}_{i})+\svF^{k}_{i} \big)_{j^{\ast}} \big\vert^2
<
\ldots
+\big(\frac{\rho_{i}}{2}+\dvFcoef_{i}\big)
        \big\vert \big(\pcF_{i}(X^{k}_{i})\big)_{j^{\ast}} \big\vert^2,
\label{dvFi(A).LkLower}
\end{align}
and
\begin{align}
L^{k}
>\ldots
+\big(\frac{\rho_{i}}{2}+\dvFcoef_{i}\big)(\dvF^{K}_{i,j^{\ast}}/\dvFcoef_{i} )^2.
\label{dvFi(A).LkGreater}
\end{align}
The above analyses of ${\cal D}(\text{NA})$, ${\cal D}(\text{NA,A})$,
and $\{{\cal S}(\text{NA})$, ${\cal S}(\text{NA,A})\}$ in ${\cal D}(\text{A})$
 indicate that
\eqref{dvFi(A).LkLower} and \eqref{dvFi(A).LkGreater}
can also involve other dual component sequences
(of zero slack component elements at great $k$) trapped in the regions
other than the neighborhoods of their upper bounds.
Next, Assumption~\ref{AssumptionSetI}(A)
yields either the exclusion of $\dvF^{K}_{i,j^{\ast}}>0$
(impermissible by the algorithm)
or $\dvF^{K}_{i,j^{\ast}}
  =\lim_{k\rightarrow\infty}(\pcF_{i}(X^{k}_{i}))_{j^{\ast}}=0$.
Therefore,
\begin{align}
&
\lim_{k\rightarrow\infty}\big(\pcF_{i}(X^{k}_{i})\big)_{j^{\ast}}
=\lim_{k\rightarrow\infty}\svF^{k}_{i,j^{\ast}}
=\lim_{k\rightarrow\infty} \dvF^{k}_{i,j^{\ast}}
=0,\ \
\lim_{k\rightarrow\infty}\dvF^{k}_{i,j^{\ast}}\big(\pcF_{i}(X^{k}_{i})\big)_{j^{\ast}}=0,
\notag\\&
\big(\pcF_{i}(X^{\infty}_{i})\big)_{j^{\ast}}
=\svF^{\infty}_{i,j^{\ast}}
=\dvF^{\infty}_{i,j^{\ast}}=0,\ \
\dvF^{\infty}_{i,j^{\ast}}\big(\pcF_{i}(X^{\infty}_{i})\big)_{j^{\ast}}=0.
\label{dvFi(A).(A)Limits}
\end{align}
\end{enumerate}
\end{enumerate}

\item[(B)]
Combination of 
$\lim_{j\rightarrow\infty}Z^{k_j}=Z^{\infty}$ 
(from Lemma~\ref{PDAccumulationPointLemma}),
$\sum_{k=0}^{\infty}\big\Vert Z^{k+1}_{}-Z^{k}_{}\big\Vert^2<\infty$
(from Lemma~\ref{PDSumBoundedLemma}(B)), 
and Assumption~\ref{AssumptionSetI}(D)
yields $\lim_{k\rightarrow\infty}Z^k=Z^{\infty}$.
Then, the convergence of $\{X^{k}_{i}\}_{k}$
comes from Lemma~\ref{PDDeltaLimitsLemma}(A).

\item[(C)]
The limit,
$\lim_{k\rightarrow\infty}\proxXNormOcoef^{k}_{i,l}=0$
in Lemma~\ref{PDDeltaLimitsLemma}(C) comes from
\eqref{proxXNormOcoefkilDefn}, \eqref{Ukil}, Lemma~\ref{PDDeltaLimitsLemma}(A),
and $\nabla_{X_{i,l}}\nabla_{X_{i,l}}\pcF_{i}(X^{k+1,k+1}_{i,l})$ bounded.
 
\item[(D)]
The stationarity conditions in Lemma~\ref{PDDeltaLimitsLemma}(D)
follow from \eqref{SolvingXil} through \eqref{SolvingsvHnk},
Lemma~\ref{PDDeltaLimitsLemma}(A),
and Lemma~\ref{PDAccumulationPointLemma}.
\end{enumerate}
\label{ProofOfPDDeltaLimitsLemma}
\end{proof}

We can also construct the first-order characteristics of the convex functions
involved in
\eqref{SolvingXil} through \eqref{SolvingsvHnk}
under $X_{i,l}=Z^{\ast}_{l}, Z_{l}=Z^{\ast}_{l}, \svZp_{i}=\svZp^{\ast}_{i}=0,
\svZn_{i}=\svZn^{\ast}_{i}=0,
 \svF_{i}=\svF^{\ast}_{i}\equiv -\pcF_{i}(Z^{\ast}),
 \svG_{i}=\svG^{\ast}_{i}\equiv -\pcG_{i}(Z^{\ast}),
 \svHp_{i}=\svHp^{\ast}_{i}=0$, and $\svHn_{i}=\svHn^{\ast}_{i}=0$.
 Summation of the characteristics results in the following,
\begin{align} 
&
\sum_{i}\Big[f(Z^{\ast})
      -\big\langle \dvF^{k}_{i}, 
          \pcF_{i}(X^{\ast}_{i})-\pcF_{i}(X^{k+1}_{i})
         -(X^{\ast}_{i}-X^{k+1}_{i})^{T}\,\nabla_{X_{i}}\pcF_{i}(X^{k+1}_{i})
            \big\rangle\Big]
\notag\\[-2mm]
\geq\,&
\sum_{i}\Big[
     f(X^{k+1}_{i})
    +\big\langle \dvZp^{k}_{i},  X^{k+1}_{i}-Z^{k+1}_{}+\svZp^{k+1}_{i} \big\rangle
    +\big\langle \dvZn^{k}_{i},  Z^{k+1}_{}-X^{k+1}_{i}+\svZn^{k+1}_{i} \big\rangle
\notag\\[-3mm]&\hskip8mm
+\big\langle \dvF^{k}_{i}, \svF^{k+1}_{i}+\pcF_{i}(X^{k+1}_{i})\big\rangle
+\sum_{l}\big\langle  \dvF^{k}_{i}, 
          (X^{\ast}_{i,l}-X^{k+1}_{i,l})^{T}\,
            \big(\nabla_{X_{i,l}}\pcF_{i}(X^{k+1}_{i})
                        -\nabla_{X_{i,l}}\pcF_{i}(X^{k+1,k+1}_{i,l})\big)
            \big\rangle
\notag\\[-2mm]&\hskip8mm
+\big\langle \dvG^{k}_{i}, \pcG_{i}(X^{k+1}_{i})+\svG^{k+1}_{i} \big\rangle 
+\big\langle \dvHp^{k}_{i}, \pcH_{i}(X^{k+1}_{i})+\svHp^{k+1}_{i}\big\rangle
+\big\langle \dvHn^{k}_{i}, -\pcH_{i}(X^{k+1}_{i})+\svHn^{k+1}_{i} \big\rangle
\notag\\&\hskip8mm
+\rho_{i}\big\langle X^{k+1}_{i}-Z^{k+1}_{}+\svZp^{k+1}_{i},
        \svZp^{k+1}_{i} \big\rangle
+\rho_{i}\big\langle Z^{k+1}_{}-X^{k+1}_{i}+\svZn^{k+1}_{i},
        \svZn^{k+1}_{i} \big\rangle
\notag\\&\hskip8mm
+\rho_{i}\big\langle \pcF_{i}(X^{k+1}_{i})+\svF^{k+1}_{i},
        \svF^{k+1}_{i}+\pcF_{i}(Z^{\ast}) \big\rangle
+\rho_{i}\big\langle \pcG_{i}(X^{k+1}_{i})+\svG^{k+1}_{i}, 
        \svG^{k+1}_{i}+\pcG_{i}(Z^{\ast}) \big\rangle
\notag\\&\hskip8mm
+\rho_{i}\big\langle \pcH_{i}(X^{k+1}_{i})+\svHp^{k+1}_{i},
        \svHp^{k+1}_{i}\big\rangle
+\rho_{i}\big\langle -\pcH_{i}(X^{k+1}_{i})+\svHn^{k+1}_{i},
        -\svHn^{k+1}_{i} \big\rangle
\notag\\&\hskip8mm
+\rho_{i}\big\langle (X^{k+1}_{i}-Z^{k}_{}-\svZn^{k}_{i})
                        +(X^{k+1}_{i}-Z^{k}_{}+\svZp^{k}_{i}),
                    X^{k+1}_{i}-Z^{\ast} \big\rangle
\notag\\&\hskip8mm
+\big\langle \rho_{i}\big(
                           Z^{k+1}_{}-X^{k+1}_{i}-\svZp^{k}_{i}
                          +Z^{k+1}_{}-X^{k+1}_{i}+\svZn^{k}_{i}
                                    \big)
                    +\tau^{k}(Z^{k+1}_{}-Z^{k}_{})
                    ,
                    Z^{k+1}_{}-Z^{\ast}_{}
        \big\rangle
\notag\\&\hskip8mm
+\rho_{i}\sum_{l}\big\langle
        \pcF_{i}(X^{k+1,k+1}_{i,l})-\pcF_{i}(X^{k}_{i})+\pcF_{i}(X^{k}_{i})+\svF^{k}_{i},
                (X^{k+1}_{i,l}-X^{\ast}_{i,l})^{T}\,
                    \nabla_{X_{i,l}}\pcF_{i}(X^{k+1,k+1}_{i,l}) \big\rangle
\notag\\[-2mm]&\hskip8mm
+\rho_{i}\big\langle \pcG_{i}(X^{k+1}_{i})+\svG^{k}_{i},
                \pcG_{i}(X^{\ast}_{i})-\pcG_{i}(X^{k+1}_{i}) \big\rangle
+\rho_{i}\sum_{l}
        \big\langle \pcG_{i}(X^{k+1,k+1}_{i,l})-\pcG_{i}(X^{k+1}_{i}),
                \pcG_{i,l}(X^{k+1}_{i,l}-Z^{\ast}_{l}) \big\rangle
\notag\\[-1mm]&\hskip8mm
+\rho_{i}\big\langle \pcH_{i}(X^{k+1}_{i})+\svHp^{k}_{i}+\pcH_{i}(X^{k+1}_{i})-\svHn^{k}_{i},
                \pcH_{i}(X^{k+1}_{i}) \big\rangle
\notag\\[-1mm]&\hskip8mm
+2\rho_{i}\sum_{l}
        \big\langle \pcH_{i}(X^{k+1,k+1}_{i,l})-\pcH_{i}(X^{k+1}_{i}),
                \pcH_{i,l}(X^{k+1}_{i,l}-Z^{\ast}_{l}) \big\rangle
\notag\\[-1mm]&\hskip8mm
+\svZpcoef^{k}_{i}\big\langle \svZp^{k+1}_{i}-\svZp^{k}_{i},
        \svZp^{k+1}_{i} \big\rangle
+\svZncoef^{k}_{i}\big\langle \svZn^{k+1}_{i}-\svZn^{k}_{i}, 
        \svZn^{k+1}_{i} \big\rangle
\notag\\&\hskip8mm
+\svFcoef^{k}_{i}\big\langle \svF^{k+1}_{i}-\svF^{k}_{i}, 
        \svF^{k+1}_{i}+\pcF_{i}(Z^{\ast}) \big\rangle
+\svGcoef^{k}_{i}\big\langle \svG^{k+1}_{i}-\svG^{k}_{i},
        \svG^{k+1}_{i}+\pcG_{i}(Z^{\ast}) \big\rangle
\notag\\&\hskip8mm
+\svHpcoef^{k}_{i}\big\langle \svHp^{k+1}_{i}-\svHp^{k}_{i},
        \svHp^{k+1}_{i}\big\rangle
+\svHncoef^{k}_{i}\big\langle \svHn^{k+1}_{i}-\svHn^{k}_{i},
        \svHn^{k+1}_{i} \big\rangle
\notag\\&\hskip8mm
+\sum_{l}\proxXNormOcoef^{k}_{i,l}
        \big(\Vert X^{k+1}_{i,l}-X^{k}_{i,l} \Vert_{1}
                    -\Vert X^{\ast}_{i,l}-X^{k}_{i,l} \Vert_{1}\big)
+\sum_{l}\proxXNormTcoef^{k}_{i,l}
        \big\langle X^{k+1}_{i,l}-X^{k}_{i,l}, X^{k+1}_{i,l}-Z^{\ast}_{l} \big\rangle
\Big],\ \forall k.
\label{GlobalInequalityast}
\end{align}
Next, with the help of Lemma~\ref{PDDeltaLimitsLemma}(A,B,C), 
the continuity of $\{\pcF_{i},\pcG_{i},\pcH_{i},\nabla_{X_{i,l}}\pcF_{i}\}$,
and $\pcF_{i}(X^{\ast}_{i})-\pcF_{i}(X^{k+1}_{i})
         -(X^{\ast}_{i}-X^{k+1}_{i})^{T}\,\nabla_{X_{i}}\pcF_{i}(X^{k+1}_{i}) \geq 0$,
we operate $\overline{\lim}$ on
\eqref{GlobalInequalityast} to obtain  $f(Z^{\ast})\geq f(Z^{\infty})$,
which is stated in 
\begin{lemma}
  $f(Z^{\ast})\geq f(Z^{\infty})$ and
$Z^{\infty}$ is an optimal solution 
to the primal problem~\eqref{ACPGSPrimalProblemOriginal}.
\label{OptimalSolLemma} 
\end{lemma}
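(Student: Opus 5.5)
The plan is to pass to the limit superior, term by term, in the summed first-order inequality \eqref{GlobalInequalityast}. Here $Z^{\ast}$ is any optimal solution of \eqref{ACPGSPrimalProblemOriginal}, so $X^{\ast}_i=Z^{\ast}$ for all $i$. Once $f(Z^{\ast})\geq f(Z^{\infty})$ is obtained, Lemma~\ref{PDDeltaLimitsLemma}(A,B) gives that $Z^{\infty}$ is feasible. Hence $f(Z^{\infty})\geq f(Z^{\ast})$ as well, and $Z^{\infty}$ is optimal.

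\textbf{The left-hand side.} Since $\dvF^{k}_{i}\geq 0$ by the built-in bounds \eqref{DualUpdateRulesdvEFGBoundsj}, and $\pcF_i$ is componentwise convex, the bracketed Bregman-type term
\[
\pcF_{i}(X^{\ast}_{i})-\pcF_{i}(X^{k+1}_{i})-(X^{\ast}_{i}-X^{k+1}_{i})^{T}\nabla_{X_{i}}\pcF_{i}(X^{k+1}_{i})
\]
is componentwise nonnegative. So the left-hand side is bounded above by $Nf(Z^{\ast})$ for every $k$.

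\textbf{The right-hand side.} I would go through it group by group, using that all sequences are bounded (Lemma~\ref{PDAccumulationPointLemma}) and the parameters are bounded (Assumption~\ref{AssumptionSetI}(C)).

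\begin{enumerate}
\item[(i)] $\sum_i f(X^{k+1}_i)\to Nf(Z^{\infty})$ by Lemma~\ref{PDDeltaLimitsLemma}(B).
\item[(ii)] The dual pairings with the extended residuals $\eZp,\eZn,\eF,\eG,\eHp,\eHn$ at $k+1$ tend to zero. The duals are bounded and the residuals vanish by Lemma~\ref{PDDeltaLimitsLemma}(A).
\item[(iii)] The penalty cross terms $\rho_i\langle e^{k+1},\cdot\rangle$ vanish for the same reason. The second factors (slacks, $\svF^{k+1}_i+\pcF_i(Z^{\ast})$, and so on) are bounded.
\item[(iv)] The terms involving $X^{k+1}_i-Z^{k}\pm$ slacks, $Z^{k+1}-X^{k+1}_i\pm$ slacks, and $\tau^k(Z^{k+1}-Z^k)$ tend to zero. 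They are bounded multiples of quantities shown to vanish in Lemma~\ref{PDDeltaLimitsLemma}(A): $X^k_i-Z^k\to0$, slacks $\to0$, and successive differences $\to0$.
\item[(v)] The Gauss--Seidel mismatch terms vanish because $X^{k+1,k+1}_{i,l}-X^{k+1}_i\to0$ and $\nabla\pcF_i$ is continuous on $[-u,u]$. These are $\nabla_{X_{i,l}}\pcF_i(X^{k+1}_i)-\nabla_{X_{i,l}}\pcF_i(X^{k+1,k+1}_{i,l})$, $\pcF_i(X^{k+1,k+1}_{i,l})-\pcF_i(X^{k}_i)$, and the analogous $\pcG,\pcH$ differences.
\item[(vi)] The remaining $\pcF$ penalty term pairs the vanishing residual $\pcF_i(X^k_i)+\svF^k_i$ with bounded gradients, so it vanishes.
\item[(vii)] The affine term $\rho_i\langle \pcG_i(X^{k+1}_i)+\svG^k_i,\pcG_i(Z^{\ast})-\pcG_i(X^{k+1}_i)\rangle$ vanishes because its first factor tends to zero. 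The $\pcH$ term vanishes because $\pcH_i(X^{k+1}_i)\to0$.
\item[(viii)] The proximal-slack terms and $\proxXNormTcoef^{k}_{i,l}\langle X^{k+1}_{i,l}-X^{k}_{i,l},\cdot\rangle$ vanish since successive differences go to zero.
\item[(ix)] The $1$-norm proximal terms vanish by Lemma~\ref{PDDeltaLimitsLemma}(C), since $\proxXNormOcoef^{k}_{i,l}\to0$ and the norms are bounded on $[-u,u]$.
\end{enumerate}

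Taking $\overline{\lim}$ then yields $Nf(Z^{\ast})\geq Nf(Z^{\infty})$.

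\textbf{Expected obstacle.} The main difficulty is ensuring that every term on the right-hand side genuinely has limit zero rather than merely staying bounded. The most delicate are the slack terms paired with $\svF^{k+1}_i+\pcF_i(Z^{\ast})$ and $\svG^{k+1}_i+\pcG_i(Z^{\ast})$, which need not vanish. They must always be multiplied by a vanishing factor: a residual $e^{k+1}$, or a difference $\svF^{k+1}_i-\svF^k_i$ with bounded $\svFcoef^k_i$. I would check this pairing explicitly. A second point to check is that the nonnegative Bregman term enters the left-hand side with a minus sign, so dropping it only strengthens the inequality; this relies on $\dvF^k_i\geq0$.
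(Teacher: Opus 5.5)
Your proposal is correct and is essentially the paper's own argument. The paper likewise takes $\overline{\lim}$ of \eqref{GlobalInequalityast}, bounds the left-hand side by $Nf(Z^{\ast})$ using $\dvF^{k}_{i}\geq 0$ and the nonnegativity of the convexity term, and sends every right-hand term other than $\sum_i f(X^{k+1}_i)$ to zero via Lemma~\ref{PDDeltaLimitsLemma}(A,B,C) and the continuity of $\pcF_i,\pcG_i,\pcH_i,\nabla_{X_{i,l}}\pcF_i$; optimality then follows from the feasibility of $Z^{\infty}$ given by Lemma~\ref{PDDeltaLimitsLemma}(A,B), exactly as you say.
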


It is noticed that in the above argument,
\eqref{GlobalInequalityast} yields
$\big\langle \dvF^{\infty}_{i},
          \pcF_{i}(X^{\ast}_{i})-\pcF_{i}(X^{\infty}_{i})
         -(X^{\ast}_{i}-X^{\infty}_{i})^{T}\,\nabla_{X_{i}}\pcF_{i}(X^{\infty}_{i})
            \big\rangle= 0$;
the $j$-th component $\dvF^{\infty}_{i,j}>0$ leads to
\begin{align}
&
(\pcF_{i}(Z^{}))_j
=(\pcF_{i}(Z^{\infty}))_j
+(Z^{}-Z^{\infty})^{T}\,\nabla_{Z}(\pcF_{i}(Z^{\infty}))_j
\ \ \forall Z \in \{ tZ^{\ast}+(1-t)Z^{\infty}, t\in[0,1]\},
\label{OptimalSoluLinkedviaTaylorSExpan}
\end{align}
each $Z$ in \eqref{OptimalSoluLinkedviaTaylorSExpan} is optimal,
since $f$ is linear.
\eqref{OptimalSoluLinkedviaTaylorSExpan} is
a first-order Taylor series expansion
of $(\pcF_{i}(Z^{}))_j$ at $Z^{\infty}$
and the quadratic part is absent,
which implies that some components of $Z^{\infty}$ are the same as
that of $Z^{\ast}$, depending on the structures of
$(\pcF_{i}(Z^{}))_j$ for all $i$ and $j$.
  This issue is to be examined further.

Summarizing the above lemmas, we have
\begin{theorem}
Suppose that Assumption~\ref{AssumptionSetI} holds.
 Then,
\begin{enumerate}
\item[(A)] 
The sequences, $\{X^{k}_{i}$, $Z^{k}$, $\svZp^{k}_{i}$,
  $\svZn^{k}_{i}$, $\svF^{k}_{i}$, $\svG^{k}_{i}$,
  $\svHp^{k}_{i}$, $\svHn^{k}_{i}\}_{k}$
  converge with the limits, $X^{\infty}_{i}=Z^{\infty}$,
  $\svZp^{\infty}_{i}=\svZn^{\infty}_{i}=0$,
  $\svF^{\infty}_{i}\geq 0$, $\svG^{\infty}_{i}\geq 0$,
  and $\svHp^{\infty}_{i}=\svHn^{\infty}_{i}=0$,
  satisfying
$\pcF_{i}(Z^{\infty})\leq 0$,
$\pcG_{i}(Z^{\infty})\leq 0$,
$\pcH_{i}(Z^{\infty})=0$ $\forall i$.
\item[(B)]
$\dvZp^{\infty}_{i}\geq 0$,
$\dvZn^{\infty}_{i}\geq 0$,
$\dvF^{\infty}_{i}\geq 0$,
$\dvG^{\infty}_{i}\geq 0$,
$\dvHp^{\infty}_{i}\geq 0$,
$\dvHn^{\infty}_{i}\geq 0$,
$\big\langle \dvF^{\infty}_{i},\pcF_{i}(X^{\infty}_{i})\big\rangle= 0$,
$\big\langle \dvG^{\infty}_{i},\pcG_{i}(X^{\infty}_{i})\big\rangle= 0$,
and
\begin{align}
  &
-\Big(f_{l}
    +\dvZp^{\infty}_{i,l}
    -\dvZn^{\infty}_{i,l}
    +(\pcG_{i,l})^T\dvG^{\infty}_{i}
    +(\pcH_{i,l})^T \big(\dvHp^{\infty}_{i}-\dvHn^{\infty}_{i}\big)
    +\big\langle  \dvF^{\infty}_{i}, \nabla_{X_{i,l}}\pcF_{i}(X^{\infty}_{i}) \big\rangle
\Big)
  \in \partial_{X_{i,l,}}\s\Indicator_{[-u_{l}, u_{l,j'}]}(X^{\infty}_{i,l}),
\notag\\&
  -\frac{1}{N}
   \sum_{i}\big(
         \dvZn^{\infty}_{i,l}-\dvZp^{\infty}_{i,l}
        \big)
  \in \partial_{Z_{l}}\Indicator_{[-u_{l},u_{l}]}(Z^{\infty}_{l}),\ \
-\dvZp^{\infty}_{i}
\in
  \partial_{\svZp_{i}}\Indicator_{[0,\usvZp_{Y_{i}}]}(\svZp^{\infty}_{i}),
\notag\\&
-\dvZn^{\infty}_{i}\in
\partial_{\svZn_{i}}\Indicator_{[0,\usvZn_{Y_{i}}]}(\svZn^{\infty}_{i}),\ \
-\dvF^{\infty}_{i}
\in
\partial_{\svF_{i}}\Indicator_{[0,\usvF_{Y_{i}}]}(\svF^{\infty}_{i}),\ \
-\dvG^{k}_{i}
\in
\partial_{\svG_{i}}\Indicator_{[0,\usvG_{Y_{i}}]}(\svG^{\infty}_{i}),
\notag\\&
-\dvHp^{\infty}_{i}\in
\partial_{\svHp_{i}}\Indicator_{[0,\usvHp_{Y_{i}}]}(\svHp^{\infty}_{i}),\ \
-\dvHn^{k}_{i}\in
\partial_{\svHn_{i}}\Indicator_{[0,\usvHn_{Y_{i}}]}(\svHn^{\infty}_{i}).
\label{StationarityConditionsThm1}
\end{align}
\item[(C)] 
$\{L^{k}\}_{k}$ converges and
$\lim_{k\rightarrow\infty}  L^{k}=N f(Z^{\infty})$.
\item[(D)] 
  $\lim_{k\rightarrow\infty} f(X^{k}_{i})
      =\lim_{k\rightarrow\infty} f(Z^{k})=f(Z^{\infty})$, and 
 $Z^{\infty}$ is an optimal solution of 
the primal problem~\eqref{ACPGSPrimalProblemOriginal}.
\end{enumerate}
\label{ConvergenceTheorem}
\end{theorem}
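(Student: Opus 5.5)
The theorem is essentially an assembly of Lemmas~\ref{PDAccumulationPointLemma}--\ref{OptimalSolLemma}, so the plan is to collect their conclusions in the right order under Assumption~\ref{AssumptionSetI}.

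\emph{Boundedness and summability.} By Assumption~\ref{AssumptionSetI}(A), $\{L^{k}\}_{k}$ is monotonically decreasing and bounded below, hence convergent. Lemma~\ref{PDAccumulationPointLemma} then supplies bounded sequences and a common convergent subsequence $\{k_j\}$ with accumulation points. Taking $K=k_j$ in \eqref{GlobalInequalitydvSum2K} and letting $j\to\infty$ gives the finite sums of Lemma~\ref{PDSumBoundedLemma}. In particular, successive differences of all primal and slack sequences are square-summable, so they tend to zero, and $X^{\infty+}_{i}=X^{\infty}_{i}$, and so on.

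\emph{Part (A).} This follows from Lemma~\ref{PDDeltaLimitsLemma}(A),(B). The case analysis over $\cD(\cdot)\times\cS(\cdot)$ in Proof~\ref{ProofOfPDDeltaLimitsLemma} gives, componentwise, that the extended sequences $e^{k}$ tend to zero. It also gives $\svZp^{\infty}_{i}=\svZn^{\infty}_{i}=\svHp^{\infty}_{i}=\svHn^{\infty}_{i}=0$ and nonnegativity of $\svF^{\infty}_{i},\svG^{\infty}_{i}$. Full-sequence convergence of $\{Z^{k}\}$ comes from square-summable increments together with Assumption~\ref{AssumptionSetI}(D) (isolated cluster points). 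Convergence of $X^{k}_{i}$ then follows from $X^{k}_{i}-Z^{k}\to 0$, and convergence of the slacks from $e^{k}\to0$, e.g.\ $\svF^{k}_{i}=\eF^{k}_{i}-\pcF_{i}(X^{k}_{i})$ together with continuity of $\pcF_{i},\pcG_{i},\pcH_{i}$. Feasibility is immediate: $\pcF_{i}(Z^{\infty})=-\svF^{\infty}_{i}\le0$, and similarly for $\pcG_{i}$, while for $\pcH_{i}$ we have $\pcH_{i}(Z^{\infty})=-\svHp^{\infty}_{i}=\svHn^{\infty}_{i}=0$.

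\emph{Part (B).} Dual nonnegativity follows because the built-in bounds \eqref{DualUpdateRulesdvEFGBoundsj} keep each dual iterate in $[0,u_{\mu}]$ once started there, so the limits are nonnegative. Complementarity is part of \eqref{StationarityConditionsPart1}. The stationarity relations \eqref{StationarityConditionsThm1} are Lemma~\ref{PDDeltaLimitsLemma}(D). These are obtained by passing to the limit along $k_j$ in the optimality conditions \eqref{SolvingXil}--\eqref{SolvingsvHnk}, using three facts:
\begin{itemize}
\item increments vanish, so all proximal and penalty-difference terms disappear;
\item $\proxXNormOcoef^{k}_{i,l}\to0$ by Lemma~\ref{PDDeltaLimitsLemma}(C), so the $1$-norm subgradient term vanishes;
\item graphs of subdifferentials of indicator functions of boxes are closed.
\end{itemize}

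\emph{Parts (C) and (D).} For (C), evaluate $L^{k}$ term by term. Every multiplier term $\langle\mu^{k}_{i},e^{k}_{i}\rangle$ vanishes because the duals are bounded and $e^{k}_{i}\to0$, and every penalty term vanishes likewise. Hence $L^{k}\to\sum_i f(X^{\infty}_{i})=Nf(Z^{\infty})$, using linearity of $f$. For (D), continuity of the linear $f$ gives $f(X^{k}_{i})\to f(Z^{\infty})$, and Lemma~\ref{OptimalSolLemma} gives $f(Z^{\ast})\ge f(Z^{\infty})$. Combined with feasibility of $Z^{\infty}$ from (A), this yields optimality.

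\emph{Main obstacle.} The delicate step is upgrading subsequential to full convergence, and the slack and dual limits in the trapped case $\cD(\text{A})\cap\cS(\text{A})$. There the argument rests on Assumption~\ref{AssumptionSetI}(A) (``$L^{k}$ as low as possible'') to rule out $\dvF^{K}_{i,j^{\ast}}>0$, and on Assumption~\ref{AssumptionSetI}(D) to turn a zero cluster point plus vanishing increments into a limit. I would simply invoke these as established in Proof~\ref{ProofOfPDDeltaLimitsLemma}, since the theorem's hypothesis is exactly that assumption set.
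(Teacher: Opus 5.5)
Your proposal is correct and follows the paper's own route. The paper states the theorem simply as a summary of Lemmas~\ref{PDAccumulationPointLemma}--\ref{OptimalSolLemma}, and you assemble them the same way: square-summability via $K=k_j$ in the global inequality, the componentwise case analysis with Assumption~\ref{AssumptionSetI}(D) for full-sequence limits, passage to the limit in the Fermat conditions (using the vanishing 1-norm proximal parameter from Lemma~\ref{PDDeltaLimitsLemma}(C) and invariance of the dual box for nonnegativity), and Lemma~\ref{OptimalSolLemma} plus feasibility for optimality.
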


As done in \cite{Tao2025v7LP},
we estimate the rate of convergence of the algorithm composed of 
\eqref{ACPGSProximalXilA},
\eqref{ACPGSProximalZl},
\eqref{ACPGSProximalsvZpiA} through \eqref{ACPGSProximalsvHniA},
\eqref{DualUpdateRules}, \eqref{DualUpdateRulesdvEFGBoundsj},
and \eqref{proxXNormOcoefkilDefn} as follows.
It is observed that $X^{\infty}_{i}=Z^{\infty}$ is a direct consequence
of the bounded sums of the squared terms in Lemma~\ref{PDSumBoundedLemma}(A)(B)
(see Proof~\ref{ProofOfPDDeltaLimitsLemma}(A)(B) also),
 therefore, these bounded sums may be used to estimate
 roughly the rate of convergence of the algorithm.
To this end, recall the well-known result of
$\sum_{k=1}^{\infty}k^{-q}<\infty$ if and only if $q>1$,
we then infer that there exists $\Lambda>0$ such that
 $\Vert X^{k}_{i}-Z^{k}_{}+\svZp^{k}_{i} \Vert^2<\Lambda^2/k$
 for almost all great $k\in\mathbb{N}$ (or on average).
Consequently, $O(1/k^{1/2})$ is a rough estimate
of the rate of convergence of the algorithm,
stated in Theorem~\ref{ConvgRateTheorem}.
\begin{theorem}
(\textbf{$O(1/k^{1/2})$ rate of convergence}).
Suppose that Assumption~\ref{AssumptionSetI} holds.
Then, $O(1/k^{1/2})$ is a rough estimate of the rate of convergence of
the algorithm composed of 
\eqref{ACPGSProximalXilA},
\eqref{ACPGSProximalZl},
\eqref{ACPGSProximalsvZpiA} through \eqref{ACPGSProximalsvHniA},
\eqref{DualUpdateRules}, \eqref{DualUpdateRulesdvEFGBoundsj},
and
\eqref{proxXNormOcoefkilDefn}.
\label{ConvgRateTheorem} 
\end{theorem}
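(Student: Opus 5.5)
The plan is to read the rate off the summability estimates already in hand, along the lines of the argument sketched just before the statement. By Assumption~\ref{AssumptionSetI}(A), $\{L^{k}\}_{k}$ decreases monotonically and is bounded from below. Theorem~\ref{ConvergenceTheorem}(C) identifies its limit as $Nf(Z^{\infty})$. Taking $K\to\infty$ in \eqref{GlobalInequalitydvSum2K} of Lemma~\ref{GlobalInequalitydvSum2KLemma} therefore gives
\begin{align*}
\sum_{k=0}^{\infty}J^{k}\leq L^{0}-Nf(Z^{\infty})<\infty .
\end{align*}
Each term of $J^{k}$ is nonnegative by \eqref{UkBounded} and the choice \eqref{proxXNormOcoefkilDefn}. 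Together with Assumption~\ref{AssumptionSetI}(C), which bounds $\{\rho_{i},\tau^{k},\proxXNormTcoef^{k}_{i,l},\svZpcoef^{k}_{i},\ldots\}$ below by positive constants, this recovers the summable sequences of Lemma~\ref{PDSumBoundedLemma}(A)--(C) with an explicit common constant $C:=L^{0}-Nf(Z^{\infty})$ divided by the smallest such parameter.

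Next I would convert summability into a rate. For a nonnegative summable sequence $a_k$ whose total is bounded by $C$, the minimum over the first $k$ iterates satisfies $\min_{0\le m\le k}a_m\le C/(k+1)$. Moreover, $a_k\le \Lambda^2/k$ holds for all $k$ outside a set of zero density, since otherwise comparison with $\sum 1/k$ contradicts summability. I would apply this to $a_k=\Vert \eZp^{k}_{i}\Vert^2$, $\Vert\eZn^{k}_{i}\Vert^2$, $\Vert \eF^{k}_{i}\Vert^2$, and the other residuals, on the iterations where the dual coefficients in \eqref{DualUpdateRulesdvEFGBoundsj} are active. I would apply it likewise to the successive differences $\Vert X^{k+1}_{i}-X^{k}_{i}\Vert^2$, $\Vert Z^{k+1}-Z^{k}\Vert^2$, and so on. Taking square roots gives residuals and step lengths of order $\Lambda/k^{1/2}$, in the best-iterate or ergodic sense, or "for almost all great $k$" as in the remark preceding the theorem.

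To connect these residuals to optimality, I would plug the $O(k^{-1/2})$ bounds into \eqref{GlobalInequalityast}. With bounded duals (Lemma~\ref{PDAccumulationPointLemma}) and bounded primal iterates in $[-u,u]$, every term on the right that is linear in a residual or a step difference is $O(k^{-1/2})$. The $\proxXNormOcoef^{k}_{i,l}$ term is controlled by \eqref{GlobalInequalityUkilBounded} and \eqref{proxXNormOcoefkilDefn} through the same step differences. This yields $f(Z^{\ast})-f(X^{k+1}_{i})\geq -O(k^{-1/2})$ together with feasibility violations of order $O(k^{-1/2})$, which is the claimed rough estimate.

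The main obstacle is the inactive or trapped dual indices. When $\dvFcoef^{k}_{i,j}=0$, Lemma~\ref{PDSumBoundedLemma}(A) gives no information about that residual component. The case analysis in Proof~\ref{ProofOfPDDeltaLimitsLemma} handles these components only through Assumption~\ref{AssumptionSetI}(D) and contradiction arguments, which give no quantitative rate. I would try to bound those components using the slack optimality conditions \eqref{SolvingsvFk}, the summable slack differences, and the summable $X$-differences, by writing such a residual as a telescoping sum of step differences. That argument only works over bounded index gaps, which is where the $\Delta k$ version of Assumption~\ref{AssumptionSetI}(D) would be invoked. Because this step is heuristic, the result can only be stated as a rough estimate, as the theorem does.
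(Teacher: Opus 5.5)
Your argument is essentially the paper's. The paper compares the bounded sums of squared residuals in Lemma~\ref{PDSumBoundedLemma} with $\sum_{k}k^{-1}=\infty$ to get $\Vert \eZp^{k}_{i}\Vert^2<\Lambda^2/k$ for almost all large $k$ (or on average), hence $O(1/k^{1/2})$; your best-iterate/zero-density formulation, the transfer to the objective via \eqref{GlobalInequalityast}, and your caveat about inactive dual components (which the paper's sketch ignores) only sharpen that. Two slips: the terms of $J^{k}$ are not individually nonnegative ($U^{k}_{i,l}$ may be negative and \eqref{proxXNormOcoefkilDefn} reacts one step late), so cite Lemma~\ref{PDSumBoundedLemma} directly rather than rederiving it with an explicit constant; and to get $\proxXNormOcoef^{k}_{i,l}=O(k^{-1/2})$ use the quadratic form \eqref{Ukil}, since \eqref{GlobalInequalityUkilBounded} only shows $\proxXNormOcoef^{k}_{i,l}$ is bounded.
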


\section{Feasibility, Initialization, and Parameter Estimation}
\label{sec:Initialization}

Assumption~\ref{AssumptionSetI} provides a ground to establish
Theorems~\ref{ConvergenceTheorem} and \ref{ConvgRateTheorem}.
How to realize these conditions supposed is discussed in some
more details in this section.
To this end, we obtain from
Lemma~\ref{GlobalInequalitydvSum2KLemma} and Theorem~\ref{ConvergenceTheorem},
\begin{align} 
L^{0}
\geq\,&
 N f(Z^{\infty})
+\sum_{i}\sum_{j}\sum_{k=1}^{\infty}\Big(
 \dvZpcoef^{k}_{i,j}\big\vert \big(X^{k}_{i}-Z^{k}_{}+\svZp^{k}_{i}\big)_{j}\big\vert^2
+\dvZncoef^{k}_{i,j}\big\vert \big(Z^{k}_{}-X^{k}_{i}+\svZn^{k}_{i}\big)_{j}\big\vert^2
+\dvFcoef^{k}_{i,j}\big\vert \big(\pcF_{i}(X^{k}_{i})+\svF^{k}_{i}\big)_{j}\big\vert^2
\notag\\[-0mm]&\hskip29mm
+\dvGcoef^{k}_{i,j}\big\vert \big(\pcG_{i}(X^{k}_{i})+\svG^{k}_{i}\big)_{j}\big\vert^2
+\dvHpcoef^{k}_{i,j}\big\vert \big(\pcH_{i}(X^{k}_{i})+\svHp^{k}_{i}\big)_{j}\big\vert^2
+\dvHncoef^{k}_{i,j}\big\vert \big(-\pcH_{i}(X^{k}_{i})+\svHn^{k}_{i}\big)_{j}\big\vert^2
  \Big)
\notag\\&
+\sum_{i}\sum_{k=0}^{\infty}P^{k}_{i}
+\sum_{i}\sum_{k=0}^{\infty}\sum_{l}
    \Big( \proxXNormOcoef^{k}_{i,l}\,\Vert X^{k}_{i,l}-X^{k+1}_{i,l}\Vert_{1}
         +U^{k}_{i,l}\Big),
\label{GlobalInequalitydvSum2infty}
\end{align}
where
\begin{align}
L^{0}_{i}=\,&
     f(X^{0}_{i})
    +\big\langle \dvZp^{0}_{i}, X^{0}_{i}-Z^{0}_{}+\svZp^{0}_{i} \big\rangle
    +\big\langle \dvZn^{0}_{i}, Z^{0}_{}-X^{0}_{i}+\svZn^{0}_{i} \big\rangle
    +\big\langle \dvF^{0}_{i}, \pcF_{i}(X^{0}_{i})+\svF^{0}_{i} \big\rangle
    +\big\langle \dvG^{0}_{i}, \pcG_{i}(X^{0}_{i})+\svG^{0}_{i} \big\rangle
    \notag\\&
    +\big\langle \dvHp^{0}_{i}, \pcH_{i}(X^{0}_{i})+\svHp^{0}_{i}\big\rangle
    +\big\langle \dvHn^{0}_{i}, -\pcH_{i}(X^{0}_{i})+\svHn^{0}_{i} \big\rangle
    +\frac{\rho_{i}}{2}\Big(\big\Vert X^{0}_{i}-Z^{0}_{}+\svZp^{0}_{i} \big\Vert^2 
            +\big\Vert Z^{0}_{}-X^{0}_{i}+\svZn^{0}_{i} \big\Vert^2 
            \notag\\[-3pt]&\hskip5mm
            +\big\Vert \pcF_{i}(X^{0}_{i})+\svF^{0}_{i} \big\Vert^2 
            +\big\Vert \pcG_{i}(X^{0}_{i})+\svG^{0}_{i} \big\Vert^2 
            +\big\Vert \pcH_{i}(X^{0}_{i})+\svHp^{0}_{i} \big\Vert^2 
            +\big\Vert -\pcH_{i}(X^{0}_{i})+\svHn^{0}_{i} \big\Vert^2 
    \Big),\ L^{0}=\sum_{i}L^{0}_{i},
\label{LkDefnki0}
\end{align}
\vspace{-5mm}
\begin{align}
 P^{k}_{i}
 =\,&
\sum_{l}(\proxXNormTcoef^{k}_{i,l}+\rho_{i})\big\Vert X^{k+1}_{i,l}-X^{k}_{i,l} \big\Vert^2
+(\tau^{k}+\rho_{i})\big\Vert Z^{k+1}_{}-Z^{k}_{}\big\Vert^2
+\big(\svZpcoef^{k}_{i}+\frac{\rho_{i}}{2}\big)\big\Vert \svZp^{k+1}_{i}-\svZp^{k}_{i}\big\Vert^2
\notag\\[-4pt]&
+\big(\svZncoef^{k}_{i}+\frac{\rho_{i}}{2}\big)\big\Vert\svZn^{k+1}_{i}-\svZn^{k}_{i}\Vert^2
+\big(\svFcoef^{k}_{i}+\frac{\rho_{i}}{2}\big)
                \big\Vert \svF^{k+1}_{i}-\svF^{k}_{i} \big\Vert^2
+\big(\svGcoef^{k}_{i}+\frac{\rho_{i}}{2}\big)\big\Vert \svG^{k+1}_{i}-\svG^{k}_{i} \big\Vert^2
\notag\\&
+\big(\svHpcoef^{k}_{i}+\frac{\rho_{i}}{2}\big)\big\Vert \svHp^{k+1}_{i}-\svHp^{k}_{i}\big\Vert^2
+\big(\svHncoef^{k}_{i}+\frac{\rho_{i}}{2}\big)\big\Vert \svHn^{k+1}_{i}-\svHn^{k}_{i} \big\Vert^2
\notag\\&
+\frac{\rho_{i}}{2}\sum_{l}
  \Big(              \big\Vert \pcF_{i}(X^{k+1,k+1}_{i,l})-\pcF_{i}(X^{k+1,k}_{i,l}) \big\Vert^2
                    +\big\Vert\pcG_{i,l}(X^{k+1}_{i,l}-X^{k}_{i,l}) \big\Vert^2
                    +2\big\Vert \pcH_{i,l}(X^{k+1}_{i,l}-X^{k}_{i,l}) \big\Vert^2
    \Big),
\label{GlobalInequalityPkiDefn}
\end{align}
and
\begin{align}
 U^{k}_{i,l}=
 \frac{1}{2}\big\langle \dvF^{k}_{i}+\rho_i\big(\pcF_{i}(X^{k+1,k+1}_{i,l})+\svF^{k}_{i}\big),
                (X^{k}_{i,l}-X^{k+1}_{i,l})^{T}\,
        \nabla_{X_{i,l}}\nabla_{X_{i,l}}\pcF_{i}(X^{k+1,k+1}_{i,l})\,
              (X^{k}_{i,l}-X^{k+1}_{i,l})
            \big\rangle.
\label{GlobalInequalityUklDefn}
\end{align}
The global inequality~\eqref{GlobalInequalitydvSum2infty}
provides a basis for the discussion,
as a strategy of descent adopted by the algorithm.

We address the issues of initialization and parameter estimation
 listed in Assumption~\ref{AssumptionSetI}(A).
The inequality~\eqref{GlobalInequalitydvSum2infty} indicates that
the primal and dual sequences need to be initialized appropriately
so that $L^{0}_{i}$ is
great to make \eqref{GlobalInequalitydvSum2infty} hold;
specifically, $L^{0}_{i}$ needs to be much greater than the optimal value $f(Z^{\infty})$
and each and every sub-sum in $\sum_{k=1}^{\infty}$
of the first sum term on the right-hand side
of \eqref{GlobalInequalitydvSum2infty} contains an infinite number of
nontrivial components, which is essentially
equivalent to that all the extended component
sequences are not trapped in the neighborhoods of their upper bounds,
as demonstrated by Proof~\ref{ProofOfPDDeltaLimitsLemma}(A).
To this end,
one set of possible choices for the initial and parameter values is given as follows.

(A) 
$Z^{0}_{l,j}=\lambda_{Z}\, \text{sign}(f_{l,j})\, u_{l,j}$,
$\lambda_{Z}\in[0,0.8]$, say, to have $Z^{0}$ well
inside the interior of $[-u,u]$. Then,
$f(Z^{0})=\lambda_{Z}\sum_{l,j}\vert f_{l,j}\vert u_{l,j}$;
$X^{0}_{i}=Z^{0}$ for all $i$,
consistent with the limits of $X^{\infty}_{i}=Z^{\infty}$ for all $i$.

(B)
To assign $\{\svZp^{0}_{i},\svZn^{0}_{i},\svF^{0}_{i},\svG^{0}_{i},\svHp^{0}_{i},\svHn^{0}_{i}\}$,
we start with the upper bounds
for the slack variables 
$\{\usvZp_{Y_{i}},\usvZn_{Y_{i}}$,
$\usvF_{Y_i}$,\,$\usvG_{Y_{i}}$,\,$\usvHp_{Y_{i}}$,\,$\usvHn_{Y_{i}}\}$.
(B1)
$\usvZp_{Y_{i}}=\usvZn_{Y_{i}}\geq 2u$.
(B2)
Regarding $\usvF_{Y_{i}}$, the specific components 
of $\pcF_{i}$ listed in \eqref{NLGSQuadraticConvexConstraints}
need to be taken into account.
They suggest 
$\usvF_{Y_{i},j}=|a_{i,j}|(u)+(|c_{i,j}|(u))^2+\epsilon_{\pcF_{i},j}$
or $\usvF_{Y_{i},j}=|a_{i,j}|(u)\,|b_{i,j}|(u)+(|c_{i,j}|(u))^2+\epsilon_{\pcF_{i},j}$
for some $\epsilon_{\pcF_{i},j}\geq 0$
where
$|a_{i,j}|(u):=\sum_{l}\sum_{j'}|a_{i,j,l,j'}|\,u_{l,j'}+|a_{i,j,0}|$,
and so on.
(B3)
$\usvG_{Y_{i}}=\sum_{l}\sum_{j'}\vert \pcG_{i,l,j'}\vert\, u_{l,j'}+\pcG_{i,0}
        +\epsilon_{\pcG_{i}}$ for some $\epsilon_{\pcG_{i}}\geq \max\{0,-\pcG_{i,0}\}$
where $\vert \pcG_{i,l,j'}\vert:=(|(\pcG_{i,l,j'})_{1}|,\,|(\pcG_{i,l,j'})_{2}|,\,\ldots)^T$.
(B4)
$\usvHp_{Y_{i}}=\usvHn_{Y_{i}}
=\sum_{l}\sum_{j'}\vert \pcH_{i,l,j'}\vert\, u_{l,j'}+\pcH_{i,0}
+\epsilon_{\pcH_{i}}$ with $\epsilon_{\pcH_{i}}\geq\max\{0,-\pcH_{i,0}\}$.
These choices are made to guarantee Assumption~\ref{AssumptionSetI}(B).
We tentatively take
\begin{align}
  \svZp^{0}_{i}\propto\usvZp_{Y_{i}},\
  \svZn^{0}_{i}\propto\usvZn_{Y_{i}},\
  \svF^{0}_{i}=\usvF_{Y_{i}},\
\svG^{0}_{i}=\usvG_{Y_{i}},\
  \svHp^{0}_{i}=\usvHp_{Y_{i}},\
  \svHn^{0}_{i}=\usvHn_{Y_{i}},
\label{InitializationSlackVariables}
\end{align}
in order to make $\{\eZp^{0}_{i,j}$,\,$\eZn^{0}_{i,j}$,\,$\eF^{0}_{i,j}$,\,$\eG^{0}_{i,j}$,\,$\eHp^{0}_{i,j}$,\,$\eHn^{0}_{i,j}\}$
positive and $L^{0}$ great.
It is computationally permissible that the initial values
in \eqref{InitializationSlackVariables}
are equal to the upper bounds, considering that
the slack component sequences decrease monotonically
during the interval of initial iterations.
 Also, adequately great
 $\{ \svZp^{0}_{i}$,
$\svZn^{0}_{i}$,
$\svF^{0}_{i}$,
$\svG^{0}_{i}$,
$\svHp^{0}_{i}$,
$\svHn^{0}_{i}\}$
are required, owing to their impacts on
the initial values of the extended sequences
and the number of iterations toward
the desired zero limits of the extended sequences.
Together with the parameter values,
such a great number of iterations correspond to slower paces
of the evolutions of the extended and dual component sequences,
providing effective interactions among the component sequences
via the update rules so as to realize Assumption~\ref{AssumptionSetI}(A).
On the basis of the forms of \eqref{SolvingsvZpk}
through \eqref{SolvingsvHnk},
the upper bounds for the dual variables are taken as
\begin{align}
\big\{\udvZp_{\mu_{i}},\udvZn_{\mu_{i}},\udvF_{\mu_{i}},\udvG_{\mu_{i}},
\udvHp_{\mu_{i}},\udvHn_{\mu_{i}}\big\}
\propto \rho_{i}
\big\{\usvZp_{Y_{i}},\usvZn_{Y_{i}},\usvF_{Y_{i}},\usvG_{Y_{i}},\usvHp_{Y_{i}},
        \usvHn_{Y_{i}}\big\},
\tag{\ref{UpperBoundsDualVariables}}
\end{align}
where the proportional coefficients take a value of 5, say,
to have the bounds great, helping make the dual component sequences
not trapped in the neighborhoods of their upper bounds.

(C) Based on the structure of $L^{0}_{i}$ in \eqref{LkDefnki0},
the initial values of the dual sequences are taken as
\begin{align}
  &
\dvZp^{0}_{i}=\lambda_{\dvZp_{i}}\svZp^{0}_{i},\
\dvZn^{0}_{i}=\lambda_{\dvZn_{i}}\svZn^{0}_{i},\
\dvF^{0}_{i}=\lambda_{\dvF_{i}}\big(\pcF_{i}(X^{0}_{i})+\svF^{0}_{i}\big),\
\dvG^{0}_{i}=\lambda_{\dvG_{i}}\big(\pcG_{i}(X^{0}_{i})+\svG^{0}_{i}\big),
\notag\\&
\dvHp^{0}_{i}=\lambda_{\dvHp_{i}}\big(\pcH_{i}(X^{0}_{i})+\svHp^{0}_{i}\big),\
\dvHn^{0}_{i}=\lambda_{\dvHn_{i}}\big(-\pcH_{i}(X^{0}_{i})+\svHn^{0}_{i}\big),\
\big\{\lambda_{\dvZp_{i}},\lambda_{\dvZn_{i}},
  \lambda_{\dvF_{i}},\lambda_{\dvG_{i}},
 \lambda_{\dvHp_{i}},\lambda_{\dvHn_{i}}
    \big\}\propto \rho_{i}.
\label{InitializationDualVariables}
\end{align}
Here, the coefficients
$\{\lambda_{\dvZp_{i}},\lambda_{\dvZn_{i}},\lambda_{\dvF_{i}},\lambda_{\dvG_{i}},\lambda_{\dvHp_{i}},\lambda_{\dvHn_{i}}\}$
are proportional to $\rho_{i}$,
 motivated by the numerical compatibility among
 the quantities involved in the primal update rule~\eqref{ACPGSProximalXilA}.
 As indicated by the arguments leading to Assumption~\ref{AssumptionSetI},
$\{\lambda_{\dvZp_{i}},\lambda_{\dvZn_{i}},\lambda_{\dvHp_{i}},\lambda_{\dvHn_{i}}\}$
are closer to $\rho_{i}$
and $\{\lambda_{\dvF_{i}},\lambda_{\dvG_{i}}\}$ relatively smaller,
constrained from above by \eqref{UpperBoundsDualVariables}.
 A moderate $\lambda_{\dvF_{i}}$ tends to yield moderate $U^{k}_{i,l}$
 for \eqref{GlobalInequalitydvSum2infty} to hold.
 The proposed
 $\{\lambda_{\dvZp_{i}},\lambda_{\dvZn_{i}}, \lambda_{\dvF_{i}}, \lambda_{\dvG_{i}},
 \lambda_{\dvHp_{i}},\lambda_{\dvHn_{i}}\s\}$
  help to generate great $L^{0}_{i}$ under $X^{0}_{i}=Z^{0}$,
\begin{align}
L^{0}_{i}=\,&
 f(Z^{0})
+\big(\lambda_{\dvZp_{i}}+\frac{\rho_{i}}{2}\big)
              \big\Vert \svZp^{0}_{i} \big\Vert^2
+\big(\lambda_{\dvZn_{i}}+\frac{\rho_{i}}{2}\big)
              \big\Vert \svZn^{0}_{i} \big\Vert^2
  +\big(\lambda_{\dvF_{i}}+\frac{\rho_{i}}{2}\big)
  \big\Vert \pcF_{i}(Z^{0})+\svF^{0}_{i} \big\Vert^2
\notag\\&
  +\big(\lambda_{\dvG_{i}}+\frac{\rho_{i}}{2}\big)
  \big\Vert \pcG_{i}(Z^{0})+\svG^{0}_{i} \big\Vert^2
  +\big(\lambda_{\dvHp_{i}}+\frac{\rho_{i}}{2}\big)
  \big\Vert \pcH_{i}(Z^{0})+\svHp^{0}_{i}\big\Vert^2
  +\big(\lambda_{\dvHn_{i}}+\frac{\rho_{i}}{2}\big)
  \big\Vert -\pcH_{i}(Z^{0})+\svHn^{0}_{i} \big\Vert^2.
\label{Li0}
\end{align}
This expression illustrates one important role played by the slack variables,
especially
$\{\svZp_{i}$,$\svZn_{i}$,$\svHp_{i}$,$\svHn_{i}\}$
introduced via the extended constraints of equality in that 
the choices of $\{\svZp^{0}_{i}$, $\svZn^{0}_{i}$, $\svHp^{0}_{i}$, $\svHn^{0}_{i}\}$
and $\{\lambda_{\dvZp_{i}}$,$\lambda_{\dvZn_{i}}$,$\lambda_{\dvHp_{i}}$,$\lambda_{\dvHn_{i}}\}$
can make $L^{0}_{i}$ great under $X^{0}_{i}=Z^{0}$.

(D)
The global inequality~\eqref{GlobalInequalitydvSum2infty}
and the initial values~\eqref{InitializationDualVariables}
indicate that the choices of the scalar constants
in \eqref{DualUpdateRulesdvEFGBoundsj},
\begin{align}
\big\{\dvZpcoef_{i},\dvZncoef_{i},\dvFcoef_{i},\dvGcoef_{i},
\dvHpcoef_{i},\dvHncoef_{i}\big\}\lll \rho_{i},
\tag{\ref{DualControlParametersEstimated}}
\end{align}
are required for the global inequality to hold
and the algorithm feasible.

(E)
According to the discussions leading to Assumption~\ref{AssumptionSetI},
we take
\begin{align}
&
\Gamma^{k}_{i,l}\geq 1,\ \
\big\{\proxXNormTcoef^{k}_{i,l},
\svZpcoef^{k}_{i},
\svZncoef^{k}_{i},
\svFcoef^{k}_{i},
\svGcoef^{k}_{i},
\svHpcoef^{k}_{i},
\svHncoef^{k}_{i}\big\} \propto \rho_{i},\ \
\tau^{k}\propto \frac{1}{N}\sum\rho_{i},
\label{ProximalControlParametersEstimated}
\end{align}
where the proportional coefficients are equal to or greater than 1.

(F) To control the magnitudes of the summation terms
on the right-hand side of \eqref{GlobalInequalitydvSum2infty},
$\{f, \pcF_{i}, \pcG_{i}, \pcH_{i}\}$
should be scaled down, if necessary.
Next, the stationarity conditions~(\ref{StationarityConditionsThm1}$)_{1,2}$
of Theorem~\ref{ConvergenceTheorem}
and their combination offer explicitly the interrelationships
among $\{f, \pcF_{i}, \pcG_{i}, \pcH_{i}\}$ to help implement this scaling,
along with the preconditioning to get the equal footing for the constraints
represented in \eqref{ACPGSProximalXilA}.
The issue needs to be clarified further.

(G)
Though absent in the stationarity conditions~\eqref{StationarityConditionsThm1},
the value of $\rho_{i}$ is taken as a reference to fix
the other parameter values in the above;
it controls essentially the evolution of all the sequences
and especially the extended, as described by
\eqref{EffectsOfGreaterPenalty}.
How to assign specifically great values to $\rho_{i}$ is yet to be resolved.

\section{Summary}\label{sec:Summary}
 
 The present study concerns distributed computing for huge-scale
 aggregative convex programming of a linear objective
 subject to the affine constraints of equality and inequality and
 the quadratic constraints of inequality that are convex
 and aggregatively computable.
It develops an algorithm composed of
\eqref{ACPGSProximalXilA},
\eqref{ACPGSProximalZl},
\eqref{ACPGSProximalsvZpiA} through \eqref{ACPGSProximalsvHniA},
\eqref{DualUpdateRules}, \eqref{DualUpdateRulesdvEFGBoundsj},
and \eqref{proxXNormOcoefkilDefn},
adopting the global consensus with single common variable,
extended constraints of equality, subblocks,
augmented Lagrangian, and proximal point algorithm.
The global consensus is used to partition the constraints of equality and
inequality into $N$ multi-consensus-blocks,
 and the subblocks of each consensus block
 are used to partition the primal variables
 into $M$ sets of disjoint subvectors.
 The global consensus constraints of equality
 and the original constraints are converted
 into the extended constraints of equality via slack variables
 in order to help resolve feasibility and initialization of the algorithm.
  The proximal point method with double proximal terms or single,
 the block-coordinate Gauss-Seidel method,
and ADMM are used to update the primal and slack variables;
 descent models with built-in bounds are used to update the dual,
  motivated by the mathematical structures of the first-order characteristics
 of the update rules for the primal and slack variables.
 The feasibility conditions for the algorithm to produce
 optimal solutions are described and their realizations
 through initial and parameter values
 are outlined qualitatively.
 Under the feasibility conditions supposed,
 convergence of the algorithm to optimal solutions
 is argued and the rate of convergence, $O(1/k^{1/2})$ is roughly estimated.

A few issues need to be explored further in a comprehensive
and quantitative manner,
regarding the feasibility conditions
introduced in Assumption~\ref{AssumptionSetI}(A),
 the set of choices for the initial and parameter values
presented in Sec.~\ref{sec:Initialization},
and more.

(A)
The objective function $f$ and the constraints
$\{\pcF_{}(Z)\leq 0$,
$\pcG_{}(Z)\leq 0$,
$\pcH_{}(Z)=0\}$
need to be scaled, as explained in Secs.~\ref{sec:UpdateDual}
and \ref{sec:Initialization};
the desired sizes of the ranges of the objective
and the constraint functions need to be specified first.
  Also, the value of $N$ and the possible redundant use of
  $\{\pcG(Z)\leq 0$, $\pcH(Z)=0\}$ need to be studied,
  concerning the computational size of the algorithm
  against the possibility of
  $\proxXNormOcoef^{k}_{i,l}=0$ which makes it simpler to
  update $X^{k+1}_{i,l}$.

(B)
The penalty parameter $\rho_{i}$ is used as a reference to assign
values to the parameters and the initialization of the dual.
Considering its role in the evolution of $\{L^{k}\}_{k}$
and the feasibility conditions,
its value needs to be set and tested appropriately.

(C)
A set of quantitative rules needs to be adopted for
the values of the initialization and the parameters.
It is yet to be examined regarding the balance
and compatibility among these values
in order to realize the feasibility conditions
and optimize the evolution paces of the sequences.

(D)
An efficient scheme is required to solve $X^{k+1}_{i,l}$
from \eqref{ACPGSProximalXilA}.

(E)
Specific implementation, calibration, and tests of the algorithm
are yet to be carried out.

(F)
An improved analysis of the rate of convergence of the algorithm
is needed.

(G)
Distributed computing for huge-scale linear programming is treated
as a special case of the present without $\pcF(Z)\leq 0$,
updating the work of \cite{Tao2025v7LP}.
Possible calibration of the present algorithm
through and application to problems not of huge-scales.

\begin{acknowledgments}
I thank Professor M. Ramakrishna in the Department of Aerospace Engineering at IIT Madras
 for discussions.
\end{acknowledgments}

\bibliography{DCHSACP.bib}

@PREAMBLE{
 "\providecommand{\noopsort}[1]{}" 
# "\providecommand{\singleletter}[1]{#1}%" }

@BOOK{Bertsekas1996,
   author       = {Dimitri P. Bertsekas},
   year         = "1996",
   title        = {Constrained Optimization and Lagrange Multiplier Methods},
   publisher    = {Athena Scientific, Belmont, Massachusettes},
}

@MISC{Boydetal2010,
   author       = "S. Boyd and N. Parikh and E. Chu and B. Peleato and J. Eckstein",
   title        = "Distributed optimization and statistical learning via the alternating direction method of multipliers", 
   howpublished = "Foundations and Trends in Machine Learning. \textbf{3}, 1-122",
   year         = "2010", 
}

@MISC{Mangasarian1984,
   author       = "O. L. Mangasarian",
   title        = "Some applications of penalty functions in mathematical programming", 
   howpublished = "MRC Technical Summary Report \#2720, Mathematics Research Center, University of Wisconsin-Madison",
   year         = "1984", 
}

@MISC{ParikhBoyd2013,
   author       = "N. Parikh and S. Boyd",
   title        = "Proximal Algorithms", 
   howpublished = "Foundations and Trends in Optimization. \textbf{1}, 123-231",
   year         = "2013", 
}

@MISC{SunSun2024,
   author       = "Kaizhao Sun and Xu Andy Sun",
   title        = "Dual descent augmented Lagrangian method and alternating direction method of multipliers", 
   howpublished = "SIAM J. Optim. \textbf{34}, 1679-1707",
   year         = "2024", 
}

@MISC{Tao2020,
   author       = "Luoyi Tao",
   title        = "Homogeneous shear turbulence as a second-order cone program", 
   howpublished = "e-print arXiv:1408.0376v6 [physics.flu-dyn]",
   year         = "2020",
}

@MISC{Tao2025v7LP,
   author       = "Luoyi Tao",
   title        = "Distributed computing for huge-scale linear programming", 
   howpublished = "e-print arXiv:2408.06204v7 [math.OC]",
   year         = "2025",
}

\end{document}